\documentclass[11pt]{aims}
\usepackage{amsmath, amssymb, mathrsfs}
  \usepackage{paralist}
  \usepackage{graphics} %% add this and next lines if pictures should be in esp format
  \usepackage{epsfig} %For pictures: screened artwork should be set up with an 85 or 100 line screen
\usepackage{graphicx} 
 \usepackage{epstopdf}%This is to transfer .eps figure to .pdf figure; please compile your paper using PDFLeTex or PDFTeXify.
 \usepackage[colorlinks=true]{hyperref}
 \hypersetup{urlcolor=blue, citecolor=blue}
 \usepackage[toc,page]{appendix}
\usepackage{chngcntr}
\usepackage{hyperref}

\usepackage{titlesec}
\titleformat{\section}{\Large\bfseries}{\thesection.}{4pt}{}
\titleformat{\subsection}{\large\bfseries}{\thesection.\arabic{subsection}.}{4pt}{}
\titleformat{\subsubsection}{\bfseries}{\thesection.\arabic{subsection}.\arabic{subsubsection}.}{4pt}{}
\titleformat*{\paragraph}{\bfseries}
\titleformat*{\subparagraph}{\bfseries}
\setcounter{secnumdepth}{3}

\usepackage[margin=1in]{geometry}

\newtheorem{theorem}{Theorem}[section]
\newtheorem{corollary}[theorem]{Corollary}

\newtheorem{lemma}[theorem]{Lemma}
\newtheorem{proposition}[theorem]{Proposition}
\theoremstyle{definition}
\newtheorem{definition}[theorem]{Definition}
\newtheorem{remark}[theorem]{Remark}

%% Reset equation numbering after each section
\numberwithin{equation}{section}

%% Place the running title of the paper with 40 letters or less in []
 %% and the full title of the paper in { }.
\title[A non-scaling invariant semilinear heat equation]{Construction of a stable blowup solution with a prescribed behavior for a non-scaling invariant semilinear heat equation}

% Place all authors' names in [ ] shown as running head, Leave { } empty
% Please use `and' to connect the last two names if applicable
% use FirstNameInitial.  MiddleNameInitial. LastName, or only last names of authors if there are too many authors
\author[G. K. Duong, V. T. Nguyen, H. Zaag ]{}
% It is required to enter 2010 MSC.
\subjclass{Primary: 35K55, 35B40; Secondary: 35L65, 35K57.}
% Please provide minimum  5 keywords.
 \keywords{Blowup solution, Blowup profile, Stability, Semilinear heat equation, non-scaling invariant heat equation}

\email[G. K. Duong]{duong@univ-paris13.fr}
\email[V. T. Nguyen]{Tien.Nguyen@nyu.edu}
\email[H. Zaag] {Hatem.Zaag@univ-paris13.fr}

% Put your short thanks below. For long thanks/acknowlegements,
%please go to the last acknowlegments section.
\thanks{-----------------\\ \today}

\begin{document}
\maketitle

% Enter the first author's name and address:
\centerline{\scshape Giao Ky Duong $^{\dagger , }$ \footnote{ G. K. Duong is fully funded by the European Union's Horizon 2020 research and innovation programme under the Marie Sk\l odowska-Curie grant agreement No 665850.}, Van Tien Nguyen$^\ast$ and Hatem Zaag$^{\dagger , }$ \footnote{ H. Zaag is supported by the  ANR  projet ANA\'E  ref. ANR -13-BS01-0010-03}}
\medskip
{\footnotesize
  \centerline{$^\dagger$ Universit\'e Paris 13, Sorbonne Paris Cit\'e, LAGA,  CNRS (UMR 7539), F-93430, Villetaneuse, France.}
\centerline{$^\ast$ New York University in Abu Dhabi, P.O. Box 129188, Abu Dhabi, United Arab Emirates.}
 
}

\begin{abstract} 
We consider  the semilinear heat equation
\begin{eqnarray*}
\partial_t u =  \Delta u + |u|^{p-1} u \ln ^{\alpha}( u^2  +2),
\end{eqnarray*}
in the whole space $\mathbb{R}^n$, where $p > 1$ and $ \alpha \in \mathbb{R}$. Unlike the standard case $\alpha = 0$, this equation is  not scaling invariant. We construct for this equation a solution which blows up in finite  time $T$ only at one blowup point $a$, according to the following  asymptotic dynamics:
\begin{eqnarray*}
u(x,t) \sim \psi(t) \left(1 + \frac{(p-1)|x-a|^2}{4p(T -t)|\ln(T -t)|} \right)^{-\frac{1}{p-1}} \text{ as } t \to T,
\end{eqnarray*} 
where $\psi(t)$ is the unique  positive  solution  of the ODE
\begin{eqnarray*}
\psi' = \psi^p \ln^{\alpha}(\psi^2  +2), \quad  \lim_{t\to T}\psi(t) = + \infty.
\end{eqnarray*}
The construction  relies on the reduction of the  problem  to a  finite  dimensional one  and a topological argument based  on the index theory to get the conclusion. By the interpretation of the parameters  of the finite dimensional  problem  in terms 	of the blowup time and the blowup point, we show  the stability  of the constructed  solution  with respect  to perturbations in  initial data. To our knowledge, this is the first successful construction for a genuinely  non-scale invariant  PDE of a stable  blowup solution with the derivation of the blowup profile. From this point of view, we consider our result as a breakthrough.
\end{abstract}

\maketitle
\section{Introduction.}

We are interested in the semilinear heat equation
\begin{equation}\label{equ:problem}
\left\{
\begin{array}{rcl}
\partial_t u &=& \Delta u + F(u), \\
u(0) &=& u_0 \in L^\infty(\mathbb{R}^n),
\end{array}
\right.
\end{equation}
where $u(t): \mathbb{R}^n \to \mathbb{R},$  $\Delta $ stands  for  the Laplacian  in $\mathbb{R}^n$ and
\begin{equation}\label{F-u}
F(u)  = |u|^{p-1} u \ln^{\alpha}(u^2  +2), \quad p > 1, \quad \alpha \in\mathbb{R}.
\end{equation}
By standard  results  the model  \eqref{equ:problem} is wellposed  in $L^{\infty}(\mathbb{R}^n)$ thanks to a fixed-point argument. More precisely, there is a  unique maximal  solution on $[0, T),$ with $T \leq +\infty$. If $T < +\infty$,  then   the solution of \eqref{equ:problem}  may develop  singularities  in  finite time $T$, in  the sense that
$$\| u(t)\|_{L^{\infty}} \to  + \infty \text{ as } t \to T.$$ 
In this case, $T $ is called the blowup time of $u$. Given $a \in \mathbb{R}^n$, we say that $a$ is a blowup point of $u$ if and only if  there exists $(a_j,t_j ) \to (a,T)$ as $j \to +\infty$ such that $|u(a_j,t_j)| \to +\infty$ as $j \to +\infty$.\\

In the special case  $\alpha = 0,$ the equation \eqref{equ:problem} becomes the standard semilinear heat equation
 \begin{equation}\label{euqa-problem-specical-alpha=0}
 \partial_t u = \Delta u + |u|^{p-1}u. 
 \end{equation}
This equation is invariant under the following scaling transformation
\begin{equation}\label{eq:inv}
u \mapsto u_\lambda(x,t):= \lambda^\frac{2}{p-1}u(\lambda x, \lambda^2 t).
\end{equation}
An extensive literature is devoted to equation \eqref{euqa-problem-specical-alpha=0}  and no rewiew can be exhaustive. Given our interest in the construction  question with a  prescribed blowup behavior, we  only mention previous work in this direction. 
 
In \cite{BKnon94}, Bricmont and Kupiainen   showed the existence of a solution of \eqref{euqa-problem-specical-alpha=0}  such that
 \begin{equation}\label{eq:1}
 \| (T - t)^{\frac{1}{p-1}} u(a + z \sqrt{(T -t) |\ln(T -t)|}, t) - \varphi_0(z)\|_{L^{\infty}(\mathbb{R}^n)} \to 0, \text{ as } t \to T,
 \end{equation}
where
$$\varphi_0 (z) = \left(  p-1 + \frac{(p-1)^2 z^2}{4p }\right)^{-\frac{1}{p-1}},$$
(note that Herrero and Vel\'azquez \cite{HVcpde92} proved the same result with a different method; note also that Bressan \cite{Brejde92} made a similar construction in the case of an exponential nonlinearity).

Later, Merle and Zaag \cite{MZdm97} (see  also the note \cite{MZasp96}) simplified  the proof of \cite{BKnon94} and proved the stability of the constructed solution verifying the behavior \eqref{eq:1}. Their method relies on the linearization of the similarity variables version around the expected profile. In that setting, the linearized operator has two positive eigenvalues, then a non-negative spectrum. Then, they proceed in two steps:
\begin{itemize}
\item[-] Reduction of an infinite dimensional problem to finite dimensional one: they show that controlling the similarity variable version around the profile reduces to the control of the components corresponding to the two positive eigenvalues.
\item[-] Then, they solve the finite dimensional problem thanks to a topological argument based on index theory.
\end{itemize}

The method of Merle and Zaag \cite{MZdm97} has been proved to be successful in various situations. This was the case of the complex  Ginzgburg-Landau equation by Masmoudi and Zaag \cite{MZjfa08} (see also Zaag \cite{ZAAihn98} for an ealier work) and also for the  case of a complex semilinear heat equation with no variational structure by Nouaili and Zaag \cite{NZcpde15}. We also mention the work of Tayachi and Zaag \cite{TZpre15} (see also the note \cite{TZnor15}) and the work of Ghoul, Nguyen and Zaag \cite{GNZpre16a} dealing with a nonlinear heat equation with a double source depending on the solution and its gradient in a critical way. In \cite{GNZpre16c},  Ghoul, Nguyen and Zaag successfully adapted the method to construct a stable blowup solution for a non variational semilinear parabolic system.

In other words, the method of \cite{MZdm97} was proved to be efficient even for the case of systems with non variational structure. However, all the previous examples enjoy a common scaling invariant property like \eqref{eq:inv}, which seemed at first to be a strong requirement for the method.  In fact, this was proved to be untrue.

As matter of fact, Ebde and Zaag \cite{EZsema11} were able to adapt the method to construct blowup solutions for the following non scaling invariant equation
\begin{equation}\label{euqa-ebde-and-zaag}
\partial_t u = \Delta u + |u|^{p-1} u  + f(u,\nabla u),
\end{equation}
where
$$|f(u,\nabla u)| \leq C(1  + |u|^q +  |\nabla u|^{q'}), \text{ with } q < p, q' < \frac{2p}{p + 1}.$$
These conditions ensure that the perturbation $f(u, \nabla u)$ turns out to exponentially small coefficients in the similarity variables. Later, Nguyen and Zaag \cite{NZsns16} did a more spectacular achievement by addressing the case of stronger perturbation of \eqref{euqa-problem-specical-alpha=0}, namely 

\begin{equation}\label{equation-van-tien}
\partial_t u = \Delta u + |u|^{p-1} u + \frac{\mu |u|^{p-1} u}{\ln^a( 2 + u^2)},
\end{equation}
where $\mu \in \mathbb{R}$ and $a > 0$. When moving to the similarity variables, the perturbation turns out to have a polynomial decay. Hence, when $a > 0$ is small, we are almost in the case of a critical perturbation.

In both cases addressed in \cite{EZsema11} and \cite{NZsns16}, the equations are indeed non-scaling invariant, which shows the robustness of the method. However, since both papers proceed by perturbations around the standard case \eqref{euqa-problem-specical-alpha=0}, it is as if we are still in the scaling invariant case. 

In this paper, we aim at trying the approach on a genuinely non-scaling invariant case, namely equation \eqref{equ:problem}. This is our main result.

\begin{theorem}[Blowup solutions for equation \eqref{equ:problem} with a prescribed behavior]\label{existence} There exists an 
initial data $u_0 \in L^\infty(\mathbb{R}^n)$ such that the corresponding 
solution to equation \eqref{equ:problem} blows up in finite time $T=T(u_0) > 0,$
 only at the origin. Moreover, we have
\begin{itemize}
\item[$(i)$] For all $t \in [0,T)$, there exists a positive constant $C_0$ such that
\begin{equation}\label{estime-theorem}
\left\| \psi^{-1}(t) u(x,t)  - f_0\left(\frac{x}{\sqrt{(T-t)|\ln(T-t)|}}\right)  \right\|_{L^{\infty}(\mathbb{R}^n)} \leq \frac{C_0}{\sqrt {|\ln (T -t)|}},
\end{equation}
where $\psi(t)$ is the unique  positive solution of the following ODE
\begin{equation}\label{euqq-ODE-psi}
 \psi'(t)  = \psi^p(t) \ln^{\alpha}(\psi^2(t) +2), \quad \lim_{t \to T}\psi(t) = + \infty,
 \end{equation} 
 (see Lemma \ref{assymptotic-psi-T} for the existence and uniqueness of $\psi$), and the profile $f_0$ is defined by 
\begin{equation}\label{def:f0}
f_0(z) = \left( 1 + \frac{(p-1)}{4p}|z|^2 \right)^{-\frac{1}{p-1}}.
\end{equation} 
  
\item[$(ii)$] There exits $u^{*}(x) \in C^2(\mathbb{R}^n  \backslash \{0\})$ such that $u(x,t) \to u^{*}(x) \text{ as } t \to T$  uniformly on compact sets of $\mathbb{R}^n \setminus \{0\}$, where 
\begin{equation}\label{sharp-u-*}
u^*(x) \sim \left[ \frac{(p - 1)^2 |x|^2}{ 8 p |\ln|x||}\right]^{-\frac{1}{p  - 1}} \left( \frac{4 |\ln|x||}{p - 1} \right)^{-\frac{\alpha}{ p -  1}} \text{ as } x \to 0,
\end{equation}

\end{itemize}
\end{theorem}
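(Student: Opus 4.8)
The plan is to follow the strategy of Merle and Zaag \cite{MZdm97}, but with a crucial preliminary change of unknown that linearizes the non-scaling-invariant structure. First I would set $u(x,t) = \psi(t)\, v(y,s)$, where $y = x/\sqrt{T-t}$ and $s = -\ln(T-t)$ are the usual similarity variables; using the ODE \eqref{euqq-ODE-psi} for $\psi$, the equation for $v$ should become a perturbation of the standard similarity-variable equation, with the nonlinearity $|v|^{p-1}v\,\ln^\alpha(\psi^2 v^2+2)/\ln^\alpha(\psi^2+2)$ in place of $|v|^{p-1}v$. The key analytic input is that, since $\psi(t)\to\infty$ as $t\to T$, one has $\ln(\psi^2+2)\sim 2\ln\psi \to\infty$, and the ratio $\ln^\alpha(\psi^2 v^2+2)/\ln^\alpha(\psi^2+2)\to 1$ at a rate controlled by $1/\ln\psi \sim 1/s$ (this is where I would need the precise asymptotics of $\psi$ near $T$, presumably the content of Lemma \ref{assymptotic-psi-T}: roughly $\psi(t)\sim \kappa (T-t)^{-1/(p-1)} |\ln(T-t)|^{-\alpha/(p-1)}$ for an explicit constant). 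So in similarity variables the extra $\ln^\alpha$ factor produces only an $O(1/s)$ correction — integrable-in-$\log$ but not exponentially small — which is the genuinely new difficulty compared to \cite{EZsema11}, and is handled like the polynomially-decaying perturbation in \cite{NZsns16}.

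Next I would linearize around the expected profile. Write $v = \varphi + q$ where $\varphi(y,s) = f_0(y/\sqrt s)$ plus the standard $1/s$ correction term (the function $\left(p-1+\frac{(p-1)^2|y|^2}{4ps}\right)^{-1/(p-1)} + \frac{c}{s}$ familiar from \cite{MZdm97}). The equation for $q$ reads $\partial_s q = (\mathcal L + V)q + B(q) + R + D$, where $\mathcal L = \Delta - \frac12 y\cdot\nabla + 1$ is the self-adjoint operator in $L^2_\rho$ with $\rho(y)=e^{-|y|^2/4}$, whose spectrum is $\{1-\frac{m}{2}: m\in\N\}$ (two positive eigenvalues $1$ and $\frac12$, in dimension $n$ with multiplicities $1$ and $n$), $V$ is a potential that is small in the relevant norms, $B(q)$ is quadratic, $R$ is the profile error of size $O(1/s^2)$, and $D$ collects the new terms coming from the $\ln^\alpha$ factor, of size $O(1/s)$ but structurally like the source in \cite{NZsns16}. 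I would then set up the shrinking set $V_A(s)$: prescribe that the finite-dimensional part $(q_0(s), q_1(s))$ (projections on the nonnegative eigenmodes) lies in a small box of size $A^2/s^2$ and $A/s^{3/2}$ respectively, the $q_2$ mode (null eigenvalue) satisfies $\|q_2/(1+|y|^2)\|_{L^\infty}\le A^2\ln s/s^2$, the outer-region part satisfies $\|q_e\|_{L^\infty}\le A^2/\sqrt s$, etc. — exactly the Merle–Zaag trapping set, with $A$ a large constant fixed last.

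The heart of the argument is then the two-step reduction. In the first step I would prove the \emph{a priori} estimates: assuming $q\in V_A(s)$ on $[s_0,s_1]$, all components \emph{except} the two positive modes are strictly improved (by a factor like $1/2$ or by inserting an extra small power of $1/s$), using the smoothing/decay of the semigroup $e^{\tau\mathcal L}$ on the negative part of the spectrum together with the smallness of $V$, $B(q)$, $R$, $D$ — here the $O(1/s)$ size of $D$ is exactly compatible with the $A/s^{3/2}$-type thresholds, so nothing breaks. In the second step, the two-dimensional dynamical system for $(q_0,q_1)$ is shown to be essentially $q_0' = q_0 + O(\text{small})$, $q_1' = \frac12 q_1 + O(\text{small})$, i.e.\ an expanding flow transverse to the boundary of the box; a topological (Wazewski / index-theoretic) argument then yields initial data — parametrized by the $n+1$ parameters $(d_0, d_1)\in\R\times\R^n$ prepared in the initial datum — for which $q(s)$ stays in $V_A(s)$ for all $s\ge s_0$. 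This forces $q(s)\to 0$, hence the profile \eqref{estime-theorem}; the blowup point being the origin and the final-profile asymptotics \eqref{sharp-u-*} then follow by the standard no-blowup-outside-the-origin argument plus matching of the inner and outer expansions, where the extra $(4|\ln|x||/(p-1))^{-\alpha/(p-1)}$ factor in \eqref{sharp-u-*} comes precisely from the asymptotics of $\psi$ via the relation $u^*(x) = \lim_{t\to T}\psi(t)v(x/\sqrt{T-t}, -\ln(T-t))$ evaluated along $|x|\sim\sqrt{(T-t)|\ln(T-t)|}$.

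The main obstacle I anticipate is \emph{controlling the new source term $D$ uniformly}. Unlike \cite{EZsema11}, $D$ does not decay exponentially in $s$; unlike the clean polynomial perturbation of \cite{NZsns16}, here $D$ depends on $\psi(t)$ and on $v$ itself through $\ln^\alpha(\psi^2v^2+2)$, so one must expand this logarithm carefully — writing $\ln(\psi^2 v^2+2) = \ln(\psi^2+2) + \ln\!\big(\tfrac{\psi^2 v^2+2}{\psi^2+2}\big)$ and controlling the second term by $|v-1|$ times $1/\ln\psi$ in the inner region, while in the outer region where $v$ is small one must instead bound $\ln^\alpha(\psi^2v^2+2)$ directly against $\ln^\alpha(\psi^2+2)$ — and then feed the resulting bounds, which mix $1/s$ factors with the profile size, back into the shrinking-set estimates without destroying the strict improvements. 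Making the bookkeeping of these $1/s$-losses consistent across the inner, intermediate and outer regions, and checking that the two positive modes still drive the topological argument, is the technical crux; everything else is a careful but routine adaptation of \cite{MZdm97,NZsns16}.
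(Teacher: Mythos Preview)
Your proposal is correct and follows essentially the same approach as the paper: the change of unknown $u=\psi(t)w(y,s)$, linearization around the profile $\varphi$ to get $\partial_s q = (\mathcal L + V)q + B(q) + R + D$, the shrinking-set / finite-dimensional reduction, and the topological argument are exactly what the paper does, and your identification of the control of the logarithmic correction term $D$ (of size $O(1/s)$ globally, with a sharper $O(s^{-3}\ln s\,(1+|y|)^4)$ bound inside) as the technical crux matches the paper's Lemma~\ref{inside-D}. The only minor discrepancies are cosmetic---the paper normalizes the profile so that $w\to 1$ (hence $\varphi = f_0(y/\sqrt s)+n/(2ps)$ rather than the $\kappa$-version), and the precise shrinking-set thresholds differ slightly (the paper takes $|q_0|,|q_1|\le A/s^2$, $|q_2|\le A^2\ln^2 s/s^2$)---but these do not affect the argument.
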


\begin{remark} From $(i)$, we see that $u(0,t) \sim \psi(t) \to +\infty$ as $t \to T$, which means that the solution blows up in finite time $T$ at $x = 0$. From $(ii)$, we deduce that the solution blows up only at the origin.
\end{remark}

\begin{remark} Note that the behavior in \eqref{estime-theorem} is almost the same as in the standard case $\alpha = 0$ treated in \cite{BKnon94} and \cite{MZdm97}. However, the final profile $u^*$ has a difference coming  from the extra multiplication of the size $|\ln|x||^{-\frac{\alpha}{p-1}}$, which shows that the nonlinear source in equation \eqref{equ:problem} has a strong effect to the dynamic of the solution in comparing with the standard case $\alpha = 0$.
\end{remark}

\begin{remark}
Item  $(ii)$ is in fact  a consequence of  \eqref{estime-theorem} and Lemma \ref{citia-not-blow-up}. Therefore, the main goal of this paper is to construct for equation \eqref{equ:problem} a solution blowing up in finite time and verifying the behavior \eqref{estime-theorem}.
\end{remark}

\begin{remark} By the parabolic regularity, one can show that if the initial data $u_0 \in W^{2,\infty}(\mathbb{R}^n)$, then we have for $i = 0, 1, 2$,
$$\left\| \psi^{-1 }(t) (T  - t)^{ \frac{i}{2}} \nabla^i_x u (x,t)   - (T -t)^{\frac{i}{2}}\nabla^i_x f_0\left(\frac{x}{\sqrt{(T -t)|\ln(T-t)|}}\right) \right\|_{L^\infty} \leq \frac{C}{ \sqrt{ |\ln (T - t)|}},$$
where $f_0$ is defined by \eqref{def:f0}.
\end{remark}
From the technique of Merle \cite{Mercpam92}, we can prove the following result.
\begin{corollary}
For arbitrary given set of $m$ points $x_1,...,x_m$. There exists initial data $u_0$ such that the solution $u$ of \eqref{equ:problem} with initial data $u_0$ blows up exactly at  $m$ points $x_1,...,x_m$.  Moreover, the local behavior at each blowup point $x_i$ is also given by \eqref{estime-theorem}  by replacing $x$ by  $x - x_i$.
\end{corollary}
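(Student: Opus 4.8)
The plan is to follow the superposition (``gluing'') technique of Merle \cite{Mercpam92}: we build initial data consisting of $m$ well–separated translated copies of the one–bump datum produced in Theorem \ref{existence}, and we re–run the finite dimensional reduction and the topological argument of that proof, now carrying $m$ copies of the finite dimensional unknown. Fix a blowup time $T>0$, set $\delta=\tfrac14\min_{i\ne j}|x_i-x_j|>0$, and pick $\chi\in C_0^\infty(\R^n)$ with $0\le\chi\le1$, $\chi\equiv1$ on $B(0,\delta/2)$ and $\mathrm{supp}\,\chi\subset B(0,\delta)$, so that the balls $B(x_i,\delta)$ are pairwise disjoint. For a collection of parameters $\mathbf d=(d_i)_{1\le i\le m}$ with $d_i\in\R^{n+1}$ and $s_0$ large, take as initial data at $t_0=T-e^{-s_0}$ the function
$$u_0(x)=\sum_{i=1}^m \chi(x-x_i)\,\varphi_{d_i,s_0}(x-x_i),$$
where $\varphi_{d,s_0}$ denotes the one–bump approximate initial datum used in the proof of Theorem \ref{existence} (roughly $\psi(t_0)$ times a parameter–adjusted perturbation of $f_0\big(\cdot/\sqrt{(T-t_0)|\ln(T-t_0)|}\big)$). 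Since the supports are disjoint, $u_0$ coincides with a single translated bump datum on each $B(x_i,\delta)$ and vanishes between the bumps.

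Next I would analyse the solution in the self–similar variables $y=(x-x_i)/\sqrt{T-t}$, $s=-\ln(T-t)$ centred at each point $x_i$ separately. The rescaled function $w_i(y,s)$ satisfies the same equation as in the one–point case, and I would check that the influence of the other $m-1$ bumps is super–exponentially small in the weighted ($\rho(y)=e^{-|y|^2/4}$) functional framework used in the proof of Theorem \ref{existence}: a bump located near $x_j$ sits, in the $i$–centred variables, around $|y|\gtrsim\delta\,e^{s/2}\to\infty$, so the Gaussian weight makes its contribution $O(e^{-c\,e^{s}})$, negligible against the (at worst polynomial in $s$) error terms controlled there. Equivalently, inside the shrinking set the function $w_i$ is globally small away from its core region, so the bumps essentially decouple. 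Consequently the reduction to a finite dimensional problem holds around every $x_i$ at once: controlling $w_i$ amounts to controlling the projections onto the non–negative modes of the linearised operator, i.e.\ the $(n+1)$–dimensional vector $d_i(s)$ (one component for the blowup time, $n$ for the blowup location), and the outgoing transversality at the boundary of the shrinking set survives because the eigenvalues $1$ and $\tfrac12$ are strictly positive while the coupling is far too small to spoil it.

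The finite dimensional unknown is now $\mathbf d(s)=(d_i(s))_{1\le i\le m}\in\R^{m(n+1)}$, and I would conclude with the same index–theory / Brouwer–degree (no–retraction) argument as in the proof of Theorem \ref{existence}, applied in $\R^{m(n+1)}$: it produces an initial value $\mathbf d(s_0)$ for which the solution stays trapped in the shrinking set for all $s\ge s_0$. For this choice, estimate \eqref{estime-theorem} holds around each $x_i$ with $x$ replaced by $x-x_i$; in particular $u(x_i,t)\sim\psi(t)\to+\infty$ as $t\to T$, so each $x_i$ is a blowup point. To see that the blowup set is \emph{exactly} $\{x_1,\dots,x_m\}$, note that at any point $a\notin\{x_1,\dots,x_m\}$ the rescaled solution is small (by the Gaussian tail bounds above, since $a$ lies outside all cores), so the no–blowup criterion of Lemma \ref{citia-not-blow-up} applies on a neighbourhood of $a$ and shows that $u$ stays bounded near $(a,T)$; hence $a$ is not a blowup point.

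The main difficulty is not analytic — once Theorem \ref{existence} is available there is no new estimate to establish — but bookkeeping: one must set up the $m$–bump shrinking set and verify carefully that every interaction term is negligible (which it is, being super–exponentially small precisely because the $x_i$ are at a fixed mutual distance). The one conceptual point worth stressing is that the $m$ individual blowup times cannot be prescribed independently: they are all forced to the common value $T$ by the topological argument itself, through the time–shift components of the finite dimensional unknown $\mathbf d(s)$.
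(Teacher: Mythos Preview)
Your proposal is correct and follows exactly the approach the paper itself indicates: the paper does not prove this corollary at all, it simply writes ``From the technique of Merle \cite{Mercpam92}, we can prove the following result,'' and your sketch is a faithful elaboration of that technique (gluing $m$ disjoint translated copies of the one-bump datum, running the finite-dimensional reduction on $\mathbb{R}^{m(n+1)}$, and using Lemma~\ref{citia-not-blow-up} to exclude extra blowup points). Your outline is in fact more detailed than what the paper provides.
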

As a consequence of our technique, we prove the stability of the solution constructed in Theorem \ref{existence}  under the perturbations of initial data. In particular, we have the following  result.
\begin{theorem}[Stability of the  solution constructed in Theorem \ref{existence}]\label{stability}  Consider $\hat u$ the solution constructed in Theorem \ref{existence} and denote by $\hat T$ its blowup time. 
Then there exists  $\mathcal{U}_0 \subset L^\infty(\mathbb{R}^n)$ a neighborhood of $\hat u(0)$  such that for all  $ u_0 \in  \mathcal{U}_0$,  equation \eqref{equ:problem} with the initial data  $u_0$ has a  unique solution $u(t)$  blowing   up in finite time $T(u_0)$ at a single point $a(u_0)$. Moreover, the statements $(i)$ and $(ii)$ in Theorem \ref{existence} are satisfied  by $u(x-a(u_0),t)$, and
\begin{equation}\label{limit-u-0-hat-u}
\left( T(u_0), a(u_0)\right) \to (\hat T, 0)  \text{ as } \|u_0 - \hat u_0\|_{L^\infty(\mathbb{R}^n)} \to 0.
\end{equation}
\end{theorem}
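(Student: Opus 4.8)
\medskip
\noindent\textbf{Strategy of proof of Theorem \ref{stability}.}
The plan is to follow the stability scheme of Merle and Zaag \cite{MZdm97}: the construction underlying Theorem \ref{existence} is not rigid, and its $(n+1)$ free parameters can be re-read as, essentially, the blowup time $T$ and the blowup point $a$; stability then reduces to re-tuning this pair after a small perturbation of the initial data.

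Let me first recall the structure of the proof of Theorem \ref{existence}. After the $\psi$-normalization of \eqref{euqq-ODE-psi} and a passage to self-similar-type variables with a free pair $(T,a)$, one linearizes around the profile $f_0$ of \eqref{def:f0} and obtains, in the region under control, a linear operator sharing the principal spectral structure of $\mathcal{L}=\Delta-\tfrac12 y\cdot\nabla+1$: a simple eigenvalue $1$, an $n$-fold eigenvalue $\tfrac12$, a null subspace, and otherwise dissipative behaviour. Writing the rescaled unknown as $f_0$ plus a perturbation $q_{T,a}(\cdot,s)$ with $s=-\ln(T-t)$, one splits $q_{T,a}$ into its $(n+1)$ components $q^{\mathrm{fin}}_{T,a}(s)\in\R^{n+1}$ along the two positive eigenvalues and a remainder. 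The construction proves, for suitable $A>1$ and $s_0>0$, the following dichotomy for data initialized at renormalized time $s_0$ from the prepared $(n+1)$-parameter family inside a shrinking set $\mathcal{V}_A(s_0)$: either $q_{T,a}(\cdot,s)\in\mathcal{V}_A(s)$ for all $s\ge s_0$, in which case $u$ blows up at time $T$, only at $a$, and satisfies \eqref{estime-theorem}--\eqref{sharp-u-*}; or there is a first exit time $s_1>s_0$ with $q^{\mathrm{fin}}_{T,a}(s_1)\in\partial\big([-As_1^{-2},As_1^{-2}]^{n+1}\big)$, the crossing being transversal and outgoing. A topological argument based on index theory then selects parameters realizing the first alternative, yielding $\hat u$; for $\hat u$ one has in addition $q^{\mathrm{fin}}(s)\to0$, so the rescaled trajectory of $\hat u$ lies in the \emph{interior} of $\mathcal{V}_A(s)$ for large $s$.

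For the stability statement, fix a small $\delta>0$ and set $T_1:=\hat T-\delta$. By the well-posedness of \eqref{equ:problem} in $L^\infty(\mathbb{R}^n)$, the map $u_0\mapsto u(T_1)$ is continuous, so $\|u(T_1)-\hat u(T_1)\|_{L^\infty}$ can be made as small as we wish by taking $\|u_0-\hat u_0\|_{L^\infty}$ small (in particular $u$ does not blow up on $[0,T_1]$). I would then freeze the initial time at $T_1$, let $(T,a)$ range over a small box $\mathcal{B}=[\hat T-\delta^2,\hat T+\delta^2]\times B(0,\delta^2)$, and for each $(T,a)$ run the rescaled flow starting at renormalized time $\sigma(T):=-\ln(T-T_1)$ with $u(T_1)$ as data; for $\delta$ small, $\sigma(T)\ge s_0$ on $\mathcal{B}$. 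Two facts must be established: (a) since $u(T_1)$ is close to $\hat u(T_1)$, whose $(\hat T,0)$-renormalization lies in the interior of $\mathcal{V}_A(\sigma(\hat T))$, we get $q_{T,a}(\cdot,\sigma(T))\in\mathcal{V}_A(\sigma(T))$ for all $(T,a)\in\mathcal{B}$; and (b) the map $(T,a)\mapsto q^{\mathrm{fin}}_{T,a}(\sigma(T))\in\R^{n+1}$ is, up to a shift controlled by $\|u(T_1)-\hat u(T_1)\|_{L^\infty}$, a non-degenerate change of variables whose image covers $[-A\sigma^{-2},A\sigma^{-2}]^{n+1}$, with $\partial\mathcal{B}$ mapped onto or beyond the boundary. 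Concretely, in (b), varying $T$ amounts to a time translation in $s$ acting on the $\lambda=1$ component, varying $a$ amounts to a translation in $y$ acting on the $\lambda=\tfrac12$ components, so the Jacobian is block-triangular and invertible. Fact (b) is precisely the promised ``interpretation of the finite-dimensional parameters in terms of $(T,a)$'', and it replaces the prepared parameter family of Theorem \ref{existence}.

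The conclusion then follows by running the index-theory argument once more, now with $(T,a)$ as the parameter. If no $(T,a)\in\mathcal{B}$ gave a trajectory trapped in $\mathcal{V}_A(s)$ for all $s$, then by (a) and the dichotomy each $(T,a)$ would have a finite first exit time $s_*(T,a)$, continuous in $(T,a)$ by transversality, so $\Phi(T,a):=\tfrac{s_*(T,a)^2}{A}q^{\mathrm{fin}}_{T,a}(s_*(T,a))$ would be a continuous map $\mathcal{B}\to\partial([-1,1]^{n+1})\cong S^{n}$; by (b) together with the outgoing property, $\Phi|_{\partial\mathcal{B}}$ is homotopic to a degree-$\pm1$ map of $S^n$, so $\Phi$ cannot extend continuously to $\mathcal{B}$, a contradiction. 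Hence some $(T^{*},a^{*})\in\mathcal{B}$ yields a trapped trajectory; for the corresponding solution $u$, the function $u(x-a^{*},t)$ satisfies $(i)$ and $(ii)$ of Theorem \ref{existence} with $T(u_0)=T^{*}$, $a(u_0)=a^{*}$, the blowup occurs only at $a^{*}$ as in Theorem \ref{existence}$(ii)$, and uniqueness of $u$ is the well-posedness of \eqref{equ:problem}. Finally, letting $\delta\to0$ and shrinking the neighborhood $\mathcal{U}_0$ accordingly forces $(T(u_0),a(u_0))\to(\hat T,0)$, i.e. \eqref{limit-u-0-hat-u}. I expect fact (b) to be the main obstacle: one must prove that the rescaled solution depends on the renormalization parameters $(T,a)$ non-degenerately at the order $A\sigma^{-2}$ demanded by the topological step. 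This is where the genuine absence of scaling invariance is felt — the factor $\psi(t)$ and the weight $\ln^{\alpha}(\cdot)$ enter the changes of variables and must be shown not to destroy the triangular, invertible structure of the Jacobian — whereas for the scale-invariant equation \eqref{euqa-problem-specical-alpha=0} this is a short computation. Everything else is a careful but essentially routine adaptation of \cite{MZdm97}.
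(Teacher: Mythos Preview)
Your proposal is correct and follows exactly the approach the paper intends: the paper does not actually prove Theorem \ref{stability} but simply states (see the remark immediately following it) that the stability result follows from the reduction to a finite-dimensional problem as in \cite{MZdm97}, with the same proof. Your outline is a faithful and more detailed rendering of that Merle--Zaag stability scheme, including the reinterpretation of the $(n+1)$ finite-dimensional parameters as $(T,a)$ and the topological argument; the caution you flag about the non-scaling-invariant factors entering the Jacobian computation in fact (b) is appropriate, but the paper does not address it either and implicitly treats it as covered by \cite{MZdm97}.
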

\begin{remark} We will not give the proof of Theorem \ref{stability} because the stability result follows from the reduction to a finite-dimensional case as in \cite{MZdm97} with the same proof. Here we only prove the existence and refer to \cite{MZdm97} for the stability.
\end{remark}

\section{Formulation of the problem.}
In this section, we first use the matched asymptotic technique to formally derive the behavior \eqref{estime-theorem}. Then, we give the formulation of the problem in order to justify the formal result.
\subsection{ A formal approach.} \label{sec:formal}
 
In this part, we follow the approach of Tayachi and Zaag \cite{TZpre15} to formally explain how to derive the asymptotic behavior \eqref{estime-theorem}. To do so, we introduce the following self-similarity variables
\begin{equation}\label{similariy-variable}
u(x,t) = \psi(t) w(y,s),  \quad y = \frac{x}{ \sqrt{T -t}}, \quad  s  = -\ln (T -t),
\end{equation}
where $\psi(t)$ is the unique positive solution  of equation \eqref{euqq-ODE-psi} and $\psi(t) \to +\infty$ as $t \to T$. Then, we see from \eqref{equ:problem} that $w(y,s)$ solves the following equation: for all $(y,s) \in \mathbb{R}^n \times [-\ln T, +\infty)$ 
\begin{equation}\label{equation-w}
\partial_s w   = \Delta  w - \frac{1}{2} y. \nabla w - h(s) w  + h(s) |w|^{p-1}w \frac{\ln ^{\alpha}( \psi^2_1  w^2  + 2)}{ \ln^{\alpha}(\psi^2_1   +2)},
\end{equation} 
where
\begin{equation}\label{def-psi(s)}
h(s)  =  e^{-s} \psi^{p-1}_1(s) \ln^{\alpha}(\psi^2_1(s)  +2),
\end{equation}
and
\begin{equation}\label{defini-psi-1-in-s}
\psi_1(s) = \psi(T - e^{-s}).
\end{equation}
Note that $h(s)$ admits the following asymptotic behavior as $s \to +\infty$,
\begin{equation}\label{asym-pto-h-s}
h(s) =  \frac{1}{ p-1}  \left( 1 - \frac{\alpha}{ s}  - \frac{\alpha^2 \ln s}{s^2} \right) + O\left(\frac{1}{s^2}\right), 
\end{equation}
(see  ii) of Lemma \ref{function-h(s)} for the proof of \eqref{asym-pto-h-s}). From \eqref{similariy-variable}, we see that the study of the asymptotic behavior of $u(x,t)$ as $t \to T$ is equivalent to the study of the long time behavior of $w(y,s)$ as $s \to +\infty$. In other words, the construction of the solution $u(x,t)$, which blows up in finite time $T$ and verifies the behavior \eqref{estime-theorem},
reduces to the construction of a global solution $w(y,s)$ for equation \eqref{equation-w}  satisfying
\begin{equation}
0 < \epsilon_0 \leq \limsup_{s \to + \infty} \|w(s)\|_{L^{\infty}(\mathbb{R}^n)} \leq \frac{1}{\epsilon_0}, \epsilon_0 > 0,
\end{equation}
and 

\begin{equation}\label{estime-w-r-n}
\left\| w(y,s)  - \left( 1 + \frac{(p-1)y^2}{4p s} \right)^{-\frac{1}{p-1}}  \right\|_{L^{\infty}(\mathbb{R}^n)} \to 0   \quad \text{ as } s \to +\infty.
\end{equation}
In the following, we will formally explain how to derive the behavior \eqref{estime-w-r-n}.

\subsubsection{Inner expansion}
We remark that  $ 0 , \pm 1$ are the trivial  constant  solutions  to equation \eqref{equation-w}. Since  we  are looking  for a non zero solution, let us  consider the case when  $w  \to 1 $ as  $s \to +\infty$. We now  introduce 
\begin{equation}\label{w=barw+1}
w = 1  + \bar w,
\end{equation}
then from equation \eqref{equation-w},  we see that $\bar w$ satisfies

\begin{equation}\label{euqabarw}
\partial_s \bar w = \mathcal{L}(\bar w) + N(\bar w,s),
\end{equation}
where 
\begin{align}
\mathcal{L}  & =  \Delta   - \frac{1}{2}  y . \nabla   + \textup{Id},\label{def-ope-macalL}\\
N(\bar w, s) & =  \displaystyle  h(s) |\bar w   +1|^{p-1}(\bar w  +1) \frac{\ln^{\alpha}(\psi^2_1 (\bar w  +1)^2 +2)}{ \ln^{\alpha}( \psi^2_1 +2)} - h(s)(\bar w  +1) - \bar w \label{def:N-2},
\end{align}
$\psi_1(s)$ is defined in \eqref{defini-psi-1-in-s}  and $h(s)$ behaves as  in \eqref{asym-pto-h-s}.  Note that $N $ admits the following asymptotic behavior,
\begin{equation}\label{intro-result-asym-N_1}
 N(\bar w, s) =  \frac{ p \bar w^2}{2} + O\left(\frac{|\bar w|\ln s }{s^2}\right) + O\left(\frac{|\bar w|^2}{s}\right) + O(|\bar w|^3) \quad  \text{ as } \quad (\bar w ,s)  \to (0 ,+\infty),
 \end{equation}
(see Lemma \ref{asymptotic-N-1-N-2} for the proof of this statement). 

Since $\bar w(s) \to 0$ as $s \to +\infty$ and the nonlinear term $N$ is quadratic in $\bar w$, we see from equation \eqref{euqabarw} that the linear part will play the main role in the analysis of our solution. Let us recall some properties of $\mathcal{L}$. 
The linear operator $\mathcal{L}$ is self-adjoint in $L^2_\rho(\mathbb{R}^n)$, where $L^2_\rho$ is the weighted space associated with the weight $\rho$ defined by
$$\rho(y)   =\frac{e^{- \frac{|y|^2}{4}}}{(4 \pi)^{\frac{n}{2}}},$$
and 
$$\textup{spec}(\mathcal{L}) = \left\{1 - \frac m2, m \in \mathbb{N}\right\}.$$
More precisely, we have
\begin{itemize}
\item When $n = 1$, all the eigenvalues of $\mathcal{L}$ are simple and the eigenfunction corresponding to the eigenvalue $1 - \frac m2$ is the Hermite polynomial defined by
\begin{equation}\label{Hermite}
h_m(y)   = \sum_{j=0}^{\left[ \frac{m}{2}\right]} \frac{(-1)^jm! y^{m - 2j}}{j! ( m -2j)!}.
\end{equation}
In particular, we have the following orthogonality
 $$\int_{\mathbb{R}} h_i h_j  \rho dy  =  i! 2^i \delta_{i,j}, \quad \forall (i,j) \in \mathbb{N}^2. $$

\item When $n \geq 2$, the eigenspace corresponding to the eigenvalue $1 - \frac {m}{2}$ is defined as follows
\begin{equation}\label{eigenspace}
 \mathcal{E}_m   = \left\{ h_{\beta} = h_{\beta_1} \cdots h_{\beta_n}, \text{ for all } \beta \in \mathbb{N}^n, |\beta|  = m , |\beta| = \beta_1 + \cdots +\beta_n  \right\}.
\end{equation}
\end{itemize}
Since the eigenfunctions of $\mathcal{L}$  is a basic of $L^2_\rho$, we can expand $\bar w$ in this basic as follows
$$ \bar w(y,s)  = \sum_{\beta \in \mathbb{N}^n} \bar w_{\beta}(s)h_{\beta}(y) .$$
For simplicity, let us assume that $\bar w$ is radially symmetric in $y$. 
Since $h_\beta$ with $|\beta| \geq 3$ corresponds to negative eigenvalues of $\mathcal{L}$, we may consider the solution $\bar w$ taking the form
\begin{equation}\label{form-barw}
\bar w =  \bar w_0  + \bar w_2(s)(|y|^2  - 2n),
\end{equation} 
where $|\bar w_0(s)|$ and $|\bar w_2(s)|$ go to $0$ as $s \to +\infty$. Injecting \eqref{form-barw} and  \eqref{intro-result-asym-N_1} into \eqref{euqabarw}, then  projecting equation \eqref{euqabarw} on 
the  eigenspace $\mathcal{E}_m $ with $m=0 $ and $m =2,$
\begin{equation}
\left\{ \begin{array}{l}
\bar w_0' = \displaystyle \bar w_0  + \frac{p}{2} \left( \bar w_0^2 + 8 n \bar w_2^2  \right) + O\left(\frac{ ( |\bar w_0|  +|\bar w_2| )\ln s}{s^2} \right)    \\[0.3cm]
\quad \quad \quad +\displaystyle O\left( \frac{|\bar w_0|^2 + |\bar w_2|^2}{s} \right) + O\left(|\bar w_0|^3  + |\bar w_2|^3\right),\\[0.3cm]
\bar w_2'  = \displaystyle 4p \bar w_2^2 + p \bar w_0 \bar w_2  + O\left(\frac{ (|\bar w_0|  +|\bar w_2| )\ln s}{s^2}\right)\\[0.3cm]
\quad \quad \quad +\displaystyle O\left( \frac{|\bar w_0|^2 + |\bar w_2|^2}{s}\right) + O\left(|\bar w_0|^3  + |\bar w_2|^3\right),
\end{array}
\right.
\end{equation}
as $s \to + \infty$. we now   assume   that  $|\bar w_0(s)| \ll |\bar w_2(s)|$ as $s \to +\infty$, then \eqref{ode-system} becomes
\begin{equation}\label{ode-system}
\left\{ \begin{array}{l}
\bar w_0' = \bar w_0  + O(|\bar w_2|^2)   +O\left(\frac{|\bar w_2| \ln s}{s^2}\right),\\[0.3cm]
\bar w_2' =   4p \bar w_2^2  + o(|\bar w_2|^2)   +O\left(\frac{|\bar w_2| \ln s}{s^2} \right),
\end{array} \text{ as } s \to + \infty.
\right.
\end{equation}
We consider the following cases:

- Case 1: Either  $|\bar w_2| = O\left( \frac{ \ln s}{s^2}\right)$ or $|\bar w_2| \ll \frac{\ln s}{s}$  as $s \to + \infty$, then the second equation in \eqref{ode-system} becomes 
$$\bar w_2' = O\left( \frac{|\bar w_2| \ln s}{s^2}\right) \text{ as } s  \to  +\infty,$$
which yields
$$\ln  |\bar w_2|  = O\left( \frac{\ln s}{s}\right) \text{ as } s \to +\infty,$$
which  contradicts with the condition $\bar w_2(s) \to 0 $ as $s \to +\infty$.

- Case 2: $|\bar w_2| \gg \frac{\ln s}{s^2}$ as $s \to + \infty$, then \eqref{ode-system} becomes
$$\left\{ \begin{array}{l}
\bar w_0' = \bar w_0  + O(|\bar w_2|^2) ,\\[0.3cm]
\bar w_2' =   4p \bar w_2^2  + o(|\bar w_2|^2),
\end{array} \text{ as } s \to + \infty.
\right.$$ 
This yields 
\begin{equation}\label{ru-sult-ODE-sys}
\left\{ \begin{array}{l}
\bar w_0 = O\left( \frac{1}{s^2}\right) ,\\[0.3cm]
\bar w_2 =    - \frac{1}{4ps}  + o(\frac{1}{s}),
\end{array} \text{ as } s \to + \infty.
\right.
\end{equation}
Substituting  \eqref{ru-sult-ODE-sys} into \eqref{ode-system}   yields
$$
\left\{ \begin{array}{l}
\bar w_0' = O\left( \frac{1}{s^2}\right) ,\\[0.3cm]
\bar w_2' =  4p \bar w^2_2  +  O\left(\frac{\ln s}{s^3}\right),
\end{array} \text{ as } s \to + \infty,
\right.$$
from which we improve the error for $ \bar w_2$ as follows
\begin{equation}\label{system-ODE-barw-0-1}
\left\{ \begin{array}{l}
\bar w_0 =\displaystyle  O\left(\frac{1}{s^2}\right),\\[0.3cm]
\bar w_2   = \displaystyle - \frac{1}{4p s} + O\left( \frac{\ln ^2s}{s^2} \right),
\end{array} \text{ as } s \to +\infty . 
\right.
\end{equation}
Hence, from \eqref{w=barw+1}, \eqref{form-barw} and \eqref{system-ODE-barw-0-1}, we derive 
\begin{equation}\label{assym-w}
w(y,s)    = 1  - \frac{y^2}{4 ps }  + \frac{n}{2ps}  + O\left(  \frac{\ln^2s}{s^2}\right),
\end{equation}
in $L^2_\rho (\mathbb{R}^n)$ as $s \to + \infty$. Note that the asymptotic expansion \eqref{assym-w} also holds for all $|y| \leq K$, K is an arbitrary positive number.
\subsubsection{Outer expansion.}
The  asymptotic behavior  of \eqref{assym-w} suggests that the blowup profile depends on the variable 
$$z = \frac{y}{\sqrt s}, $$
From  \eqref{assym-w}, let us try to  search  a regular solution of equation \eqref{equation-w} of the form
\begin{equation}\label{assy-profile}
w(y,s) = \phi_0(z) + \frac{n}{2 ps} + o\left( \frac{1}{s}\right) \text{ in } L^\infty_{loc} \text{ as } s \to +\infty,
\end{equation}
where $\phi_0$ is a  bounded, smooth function to be determined. From \eqref{assym-w}, we impose the condition 
\begin{equation}\label{condi-phi-0}
\phi_0(0) = 1.
\end{equation}
Since $w(y,s)$ is supposed to be bounded, we obtain from Lemma \ref{asymptotic-zp-h-s-ln-al} that 
$$\left| h(s) | w|^{p-1} w \frac{\ln^{\alpha}(\psi_1^2 w^2  +2)}{\ln^{\alpha}(\psi_1^2  +2)} -  \frac{|w|^{p-1}w}{p-1} \right| =  O\left(\frac{1}{s}\right),$$
Note also that
$$\left| \left|\phi_0(z) +  O\left( \frac{1}{s}\right)\right|^{p-1} \left(\phi_0(z) +  O\left( \frac{1}{s}\right)\right)   - |\phi_0(z)|^{p-1} \phi_0(z)  \right|  =  O\left(\frac{1}{s}\right).$$
Hence, injecting \eqref{assy-profile} into equation \eqref{equation-w} and comparing terms of order $O\left( \frac{1}{s^i}\right)$ for $j = 0, 1, \cdots$, we derive the following equation for $j = 0$,
\begin{equation}\label{equa:phi}
- \frac{1}{2} z .\nabla \phi_0(z) - \frac{\phi_0(z)}{p-1} + \frac{|\phi_0|^{p-1}\phi_0(z)}{p-1} = 0, \quad  \forall z \in \mathbb{R}^n.
\end{equation}
Solving \eqref{equa:phi} with condition \eqref{condi-phi-0}, we obtain  
\begin{equation}\label{function-phi}
\phi_0(z)  = \left( 1  + c_0 |z|^2 \right)^{-\frac{1}{p-1}},
\end{equation}
for some constant  $c_0 \geq 0$ (since we want $\phi_0$ to be bounded for all $z \in \mathbb{R}^n$). From \eqref{assy-profile}, \eqref{function-phi} and a Taylor expansion, we obtain
$$w(y,s) = 1 - \frac{c_0 y^2}{(p-1)s} + \frac{n}{2ps}  + o\left(\frac{1}{s} \right), \quad \forall |y| \leq K \text{ as } s \to + \infty ,$$
from which and the asymptotic behavior \eqref{assym-w}, we find that   
$$ c_0 = \frac{p-1}{4p}.$$
In conclusion,  we have  just derived  the following asymptotic profile  
\begin{equation}\label{eq:formexp}
 w(y,s) \sim \varphi(y,s) \quad  \text{ as } \;\; s \to +\infty,
 \end{equation}
where
\begin{equation}\label{def-varphi}
\varphi(y,s) = \left( 1+ \frac{(p-1)y^2}{4ps}\right)^{-\frac{1}{p-1}}  + \frac{n}{2ps}.
\end{equation}
\subsection{Formulation of the problem.}
In this subsection, we set up the problem  in order to  justify  the formal approach  presented  in the Section \ref{sec:formal}. In particular, we give a formulation to prove item $(i)$ of Theorem \ref{existence}. We aim at constructing for equation \eqref{equ:problem} a solution blowing up in finite time $T$ only at the origin and verifying the behavior \eqref{estime-theorem}. In the similarity variables \eqref{similariy-variable}, this is equivalent to the construction of a solution $w(y,s)$ for equation \eqref{equation-w}  defined for all $(y,s) \in \mathbb{R}^n \times [s_0, +\infty)$ and satisfying \eqref{estime-w-r-n}. The formal approach given in subsection \ref{sec:formal} (see \eqref{eq:formexp}) suggests to linearize $w$ around the profile function $\varphi$ defined by \eqref{def-varphi}. Let us introduce
\begin{equation}\label{def:q}
q(y,s)  = w(y,s) - \varphi(y,s),
\end{equation}
where $\varphi$ is defined by \eqref{def-varphi}. From \eqref{equation-w}, we see that $q$ satisfies the equation
\begin{equation}\label{equa-q}
\partial_s q   = \mathcal{L} q  + Vq  + B(q)  + R(y,s) + D(q,s),
\end{equation}
where $\mathcal{L}$ is the linear operator defined by  \eqref{def-ope-macalL} and 
\begin{align}
V &= \frac{p}{p-1} \left[ \varphi^{p-1}  -1\right], \label{def:V}\\[0.3cm]
B(q)  &= \frac{|q + \varphi|^{p-1} (q + \varphi) - \varphi^p - p \varphi^{p-1} q}{p-1},\label{def:B-q}\\[0.3cm]
R(y,s)  &= \Delta \varphi - \frac{1}{2} y \nabla \varphi  - \frac{\varphi}{p-1}  + \frac{\varphi^p}{p-1} - \partial_s \varphi,\label{def:R}\\[0.3cm]
D(q,s)  &= (q + \varphi)\left( \left( h(s)  -\frac{1}{p-1}\right) \left( |q + \varphi|^{p-1} - 1 \right)  + h(s) |q + \varphi|^{p-1} (q + \varphi) L(q + \varphi, s)\right)\label{def:D},\\
L(v, s) & = \frac{2 \alpha  \psi^2_1}{\ln(\psi_1^2  + 2)(\psi_1^2 +2)}(v -1)  + \frac{1}{\ln^{\alpha}(\psi_1^2 +2)}\int_{1}^v  f''(u) (v -u)du\label{def:L-first},
\end{align}
with $h, \psi_1(s)$ and  $\varphi$ being defined by  \eqref{def-psi(s)}, \eqref{defini-psi-1-in-s} and  \eqref{def-varphi}  respectively, and
$$f(z)  = \ln^{\alpha}(\psi_1^2 z^2  + 2), z \in \mathbb{R}.$$
Hence, proving \eqref{estime-theorem} now reduces to construct for equation \eqref{equa-q} a solution $q$ such that 
$$\lim_{s \to +\infty}\|q(s)\|_{L^\infty} \to 0.$$
Since we construct for equation \eqref{equa-q} a solution $q$ verifying $\|q(s)\|_{L^\infty} \to 0”$ as $s \to +\infty$, and the fact that
$$|B(q)| \leq C|q|^{\min{(2,p)}}, \quad \|R(s)\|_{L^\infty}  +  \|D(q,s)\|_{L^\infty} \leq \frac Cs,$$
(see Lemmas \ref{inside-D}, \ref{lemma-of-rest-termes} and \ref{estimate-B-q} for these estimates), we see that the linear part of equation \eqref{equa-q} will play an important role in the analysis of the solution.
The property of the linear operator $\mathcal{L}$ has been studied in previous section (see page \pageref{Hermite}),  and the potential $V$ has the following properties:

$i)$ Perturbation of effect of $\mathcal{L}$ inside the blowup region $\{|y| \leq K\sqrt s\}$:
$$\|V(s)\|_{L^2_\rho} \to 0 \quad \text{as} \;\; s\to +\infty.$$

$ii)$ For each  $\epsilon > 0$, there exist $K_{\epsilon} >0$ and $ s_{\epsilon} >0$ such that
$$ \sup_{\frac{y}{\sqrt s} \geq K_{\epsilon}, s \geq s_{\epsilon}} \left| V(y,s)  + \frac{p}{p-1} \right| \leq    \epsilon.$$ 
Since $1$ is the biggest eigenvalue of $\mathcal{L}$, the operator $\mathcal{L}+ V$  behaves as one with with a fully negative spectrum  outside blowup region $\{|y| \geq K\sqrt s\}$, which makes the control of the solution in this region easily.

Since the behavior of the potential $V$ inside and outside the blowup region is different,
 we will consider the dynamics of the solution for   $|y| \leq 2K\sqrt s$ and for $|y| \geq K\sqrt s$ separately for some K to be fixed large.
We introduce the following function
\begin{equation}\label{def-chi}
\chi(y,s)  = \chi_0\left(\frac{|y|}{K \sqrt s} \right),
\end{equation}
where $\chi_0 \in C^{\infty}_{0}[0,+\infty), \|\chi_0\|_{L^{\infty}} \leq 1$ and 
$$
\chi_0(x) = \left\{  \begin{array}{l}
 1 \quad \text{ for } x  \leq 1,\\
 0 \quad   \text{ for }  x  \geq 2,
\end{array} \right.$$
and  $K$ is a positive constant to be fixed large later. We now decompose $q$ by
\begin{equation}\label{decomp-q1}
q = \chi q + (1  - \chi) q =  q_b  + q_e.
\end{equation} 
(Note that $\textup{supp} (q_b) \subset \{|y| \leq 2 K \sqrt s\}$ and $\textup{supp} (q_e) \subset \{|y| \geq  K \sqrt s\}$). Since the eigenfunctions of $\mathcal{L}$ span the whole space $L^2_\rho$, let us write
\begin{equation}\label{decomp-q-b}
q_b(y,s)  =  q_0(s) + q_1(s) \cdot y + \frac{1}{2} y^T \cdot q_2(s) \cdot y - \textup{tr}(q_2(s))+ q_-(y,s),
\end{equation}
where $q_m(s) = \big(q_\beta(s)\big)_{\beta \in \mathbb{N}^n, |\beta| = m}$ and
\begin{equation}
\forall \beta \in \mathbb{N}^n, \quad q_\beta(s) = \int_{\mathbb{R}^n}  q_b(y,s) \tilde h_\beta(y) \rho dy, \quad  \tilde h_\beta =  \frac{h_\beta}{\|h_\beta\|^2_{L^2_\beta}},
\end{equation}
and 
\begin{equation}
q_-(y,s) = \sum_{\beta \in \mathbb{N}^n, |\beta| \geq 3} q_\beta(s) h_\beta(y).
\end{equation}
In particular, we denote  $q_1  = (q_{1,i})_{1 \leq i \leq n}$ and $q_2(s)$ is a $n \times n$ symmetric matrix defined explicitly by
\begin{equation}\label{defini-of_q_2}
 q_2(s)  = \int q_b \mathcal{M}(y) \rho dy = (q_{2,i,j})_{1 \leq i,j \leq n},
\end{equation}
with 
\begin{equation}\label{def-matrix-h-2} 
\mathcal{M} = \left\{ \frac{1}{4} y_i y_j  - \frac{\delta_{i,j}}{2}\right\}_{1 \leq i,j \leq n}.
\end{equation}
Hence, by \eqref{decomp-q1} and \eqref{decomp-q-b}, we can write
\begin{equation}\label{decomp-q2}
q(y,s)  = q_0(s) + q_1(s) \cdot y + \frac{1}{2} y^T \cdot q_2(s) \cdot y - \textup{tr}(q_2(s))  + q_-(y,s) + q_e(y,s).
\end{equation}
Note that $q_m(m=0,1,2)$ and $q_-$ are the components of $q_b$, and not those of $q$. 

\section{Proof of the existence assuming some technical results.}
In this section, we shall describe the main argument behind the proof of Theorem \ref{existence}. To avoid winding up with details, we shall postpone most of the technicalities involved to the next section. 
According to the transformations \eqref{similariy-variable} and \eqref{def:q}, proving $(i)$ of Theorem \ref{existence} is equivalent to showing that there exists an initial data  $q_0(y)$ at the time $s_0$  such that the corresponding solution $q$ of  equation \eqref{equa-q} satisfies
$$ \|q(s)\|_{L^{\infty}(\mathbb{R}^n)} \to  0 \text{ as } s \to + \infty.$$
In particular, we consider the following function
\begin{equation}\label{def-psi0}
\psi_{d_0,d_1}(y) = \frac{A}{s_0^2} \left( d_0  + d_1 .y\right)\chi (2 y , s_0),
\end{equation}
as the initial data for equation \eqref{equa-q}, where $(d_0, d_1) \in \mathbb{R}^{1 + n}$ are the parameters to be determined, $s_0 > 1$ and $A > 1$ are constants to be fixed large enough, and  $\chi $ is the function defined by \eqref{def-chi}. 

We aim at proving that there exists $(d_0,d_1) \in \mathbb{R} \times \mathbb{R}^n$ such that the solution $q(y,s) = q_{d_0, d_1}(y,s)$ of \eqref{equa-q} with  initial data $\psi_{d_0,d_1}(y)$  satisfies
$$\|q_{d_0,d_1}(s)\|_{L^{\infty}}   \to  0 \text{ as } s \to + \infty.$$
More precisely, we will show that there exists $(d_0,d_1) \in \mathbb{R} \times \mathbb{R}^n$ such that the solution $q_{d_0,d_1}(y,s)$ belongs to the shrinking set $S_A$ defined as follows:
\begin{definition}[A shrinking set to zero]\label{def-V(s)1}
For all $A \geq 1, s \geq 1$ we define $S_A(s)$ being the set of all functions  $q \in L^{\infty}(\mathbb{R}^n)$ such that
\begin{align*}
&|q_0| \leq  \frac{A}{s^2}, \quad |q_ {1,i}| \leq  \frac{A}{s^2}, \quad |q_{2, i,j}| \leq   \frac{ A^2 \ln^2 s}{s^2}, \quad \forall  1 \leq i,j \leq n,\\
&\left\| \frac{q_-(y)}{1  +|y|^3}\right\|_{L^{\infty}(\mathbb{R}^n)} \leq  \frac{A}{s^2}, \quad \|q_e(y)\|_{L^{\infty}(\mathbb{R}^n)} \leq  \frac{A^2}{\sqrt s},
\end{align*}
where $q_0$, $q_1 = \big(q_{1,i}\big)_{1\leq i \leq n}$, $q_2 = \big(q_{2,i,j} \big)_{1 \leq i,j\leq n}$, $q_-$ and $q_e$ are defined as in \eqref{decomp-q2}.

We also denote by $\hat S_A(s)$ being the set 
\begin{equation}\label{def:SAhat}
\hat S_A(s) = \left[ - \frac{A}{s^2},\frac{A}{ s^2}\right] \times \left[ - \frac{A}{s^2},\frac{A}{ s^2}\right]^n.
\end{equation}
\end{definition}
\begin{remark}\label{remark-inequa-q-poli} For each $ A \geq 1, s \geq 1$, we have the following estimates for all $q(s) \in S_A(s)$:
\begin{equation}\label{remark-on-q-b}
 |q(y,s)| \leq \frac{C A^2 \ln^2 s }{s^2}(1 + |y|^3), \quad \forall y \in\mathbb{R}^n,
\end{equation}
\begin{equation}\label{norm-q-inside}
\|q(s)\|_{L^{\infty}(\{|y| \leq 2 K \sqrt s\})} \leq  \frac{CA}{\sqrt s},
\end{equation}
\begin{equation}\label{norm-q-allspace}
\|q(s)\|_{L^{\infty}(\mathbb{R}^n)} \leq  \frac{CA^2}{\sqrt s}.
\end{equation}
\end{remark}
We aim at proving the following central proposition which implies Theorem \ref{existence}.
\begin{proposition}[Existence of a solution trapped in $S_A(s)$]\label{pro-exist}
There exists $A_1 \geq 1 $ such that for all $ A \geq A_1$ there exists $s_1(A) \geq 1$ such that for all $s_0 \geq s_1(A)$, there exists $(d_0,d_1) \in \mathbb{R}^{1 + n}$ such that the solution $q(y,s) = q_{d_0,d_1}(y,s)$ of \eqref{equa-q} with the initial data at the time $s_0$ given by $q(y,s_0) = \psi_{d_0,d_1}(y)$, where $\psi_{d_0,d_1}$ is defined as in \eqref{def-psi0}, satisfies
$$q(s)  \in S_A(s), \quad \forall s \in [s_0,+\infty).$$
\end{proposition}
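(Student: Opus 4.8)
The plan is to follow the now-classical strategy of Merle and Zaag \cite{MZdm97}: reduce the infinite-dimensional problem of trapping $q$ in $S_A(s)$ to a finite-dimensional problem involving only the two expanding modes $(q_0,q_1)$, and then solve the latter by a topological shooting argument based on the degree (index theory).

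\textbf{Step 1 (Parametrization of the initial data).} First I would check that for $(d_0,d_1)$ ranging over a cube $\mathcal{D}_{s_0}\subset\mathbb{R}^{1+n}$ close to $[-1,1]^{1+n}$, the function $\psi_{d_0,d_1}$ from \eqref{def-psi0} lies in $S_A(s_0)$. Since $\psi_{d_0,d_1}$ is affine in $y$ times a cutoff supported in $\{|y|\le K\sqrt{s_0}\}$, a direct computation of the decomposition \eqref{decomp-q2} gives $q_2(s_0)=q_-(s_0)=q_e(s_0)=0$ up to exponentially small errors, while $\bigl(\tfrac{s_0^2}{A}q_0(s_0),\tfrac{s_0^2}{A}q_1(s_0)\bigr)=(d_0,d_1)+O(e^{-cs_0})$. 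Hence $\psi_{d_0,d_1}\in S_A(s_0)$ with every component except $(q_0,q_1)$ strictly inside its bound, and the map $(d_0,d_1)\mapsto\bigl(\tfrac{s_0^2}{A}q_0(s_0),\tfrac{s_0^2}{A}q_1(s_0)\bigr)$ restricted to $\partial\mathcal{D}_{s_0}$ is a homeomorphism onto a small perturbation of $\partial\bigl([-1,1]^{1+n}\bigr)$, of topological degree $\pm1$.

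\textbf{Step 2 (Reduction to a finite-dimensional problem).} This is the technical heart. I would establish the following \emph{a priori} estimates: there exist $A_1$ and $s_1(A)$ such that for $A\ge A_1$, $s_0\ge s_1(A)$ and $(d_0,d_1)\in\mathcal{D}_{s_0}$, if $q(s)=q_{d_0,d_1}(s)\in S_A(s)$ on $[s_0,s_*]$ with $q(s_*)\in\partial S_A(s_*)$, then necessarily $\bigl(q_0(s_*),q_1(s_*)\bigr)\in\partial\hat S_A(s_*)$. To prove this one projects \eqref{equa-q} onto the eigenspaces $\mathcal{E}_m$ of $\mathcal{L}$ and onto the exterior region, using: the spectral properties of $\mathcal{L}$ recalled on page \pageref{Hermite}; the bounds $|B(q)|\le C|q|^{\min(2,p)}$ and $\|R(s)\|_{L^\infty}+\|D(q,s)\|_{L^\infty}\le C/s$ from Lemmas \ref{inside-D}, \ref{lemma-of-rest-termes}, \ref{estimate-B-q}, together with the sharper low-mode estimates (the $\tfrac{n}{2ps}$ correction built into $\varphi$ and the expansion \eqref{asym-pto-h-s} are precisely what make the projections of $R$ and $D$ onto modes $0,1,2$ of size $O(\ln^k s/s^2)$); the semigroup decay $e^{-\sigma/2}$ of $e^{\sigma\mathcal{L}}$ on $\{|\beta|\ge 3\}$; and the fact that $\mathcal{L}+V$ has fully negative spectrum in the exterior region $\{|y|\ge K\sqrt s\}$. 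Grönwall/Duhamel arguments then force $|q_2(s)|\le C\ln^2 s/s^2\ll A^2\ln^2 s/s^2$, $\|q_-/(1+|y|^3)\|_{L^\infty}\le A/(2s^2)$ and $\|q_e(s)\|_{L^\infty}\le A^2/(2\sqrt s)$, so none of these components can reach the boundary. In the same step I would prove the transversality property: whenever $q(s)\in S_A(s)$ and $q_0(s)=\pm A/s^2$ (resp.\ $q_{1,i}(s)=\pm A/s^2$), the projected equation $q_0'=q_0+O(\cdots)$ (resp.\ $q_{1,i}'=\tfrac12 q_{1,i}+O(\cdots)$), whose error is $o(A/s^2)$ for $A,s_0$ large because the eigenvalue is positive, makes $\tfrac{d}{ds}\bigl(\tfrac{s^2}{A}q_0\bigr)$ (resp.\ $\tfrac{d}{ds}\bigl(\tfrac{s^2}{A}q_{1,i}\bigr)$) have the sign of the corresponding component and be bounded away from $0$; thus $\hat S_A$ is exited transversally and the exit time $s_*(d_0,d_1)$ is finite and continuous.

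\textbf{Step 3 (Topological conclusion).} Suppose, for contradiction, that for every $(d_0,d_1)\in\mathcal{D}_{s_0}$ the solution leaves $S_A(s)$ at some finite $s_*=s_*(d_0,d_1)\ge s_0$. By Step 2 the map
\[
\Phi:\mathcal{D}_{s_0}\to\partial\bigl([-1,1]^{1+n}\bigr),\qquad \Phi(d_0,d_1)=\tfrac{s_*^2}{A}\bigl(q_0(s_*),q_1(s_*)\bigr),
\]
is well defined, and it is continuous by continuity of the flow in initial data together with the transversality of the exit. By Step 1, for $(d_0,d_1)\in\partial\mathcal{D}_{s_0}$ the point $\bigl(q_0(s_0),q_1(s_0)\bigr)$ already lies on $\partial\hat S_A(s_0)$ and the solution exits immediately, so $s_*=s_0$ there and $\Phi|_{\partial\mathcal{D}_{s_0}}$ coincides with the degree $\pm1$ map of Step 1. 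Then $\Phi$ would be a continuous retraction of the ball $\mathcal{D}_{s_0}$ onto its boundary sphere, which is impossible. Hence there exists $(d_0,d_1)\in\mathcal{D}_{s_0}$ with $q_{d_0,d_1}(s)\in S_A(s)$ for all $s\ge s_0$, which is Proposition \ref{pro-exist}; by Remark \ref{remark-inequa-q-poli} this also yields $\|q(s)\|_{L^\infty}\le CA^2/\sqrt s\to 0$, hence Theorem \ref{existence}$(i)$ through the changes of variables \eqref{similariy-variable}--\eqref{def:q}.

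\textbf{Main obstacle.} The real work is Step 2. Because \eqref{equ:problem} is genuinely non-scale-invariant, the potential $V$ and the forcing terms $R$ and $D$ carry $s$-dependent logarithmic corrections governed by $h(s)$ and by the ODE solution $\psi_1(s)$, so these expansions must be controlled with sufficient precision — especially for the borderline null mode $q_2$ (eigenvalue $0$), whose bound $A^2\ln^2 s/s^2$ in Definition \ref{def-V(s)1} is tailored exactly to absorb those logarithms — before the clean linear picture (expanding modes $q_0,q_1$; contracting everything else) becomes usable.
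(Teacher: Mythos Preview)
Your proposal is correct and follows essentially the same approach as the paper: the three steps you outline correspond precisely to the paper's Proposition \ref{pro-initial} (initial data), Proposition \ref{prop:redu} (reduction and transverse crossing, proved in Section 4 via the same projection/Duhamel machinery you describe), and the topological retraction argument in Step 2 of the proof of Proposition \ref{pro-exist}. Your identification of the null mode $q_2$ and the logarithmic corrections from $h(s)$ as the main technical point is exactly what the paper isolates in \eqref{odeq2} and Proposition \ref{control-q(s)}.
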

From \eqref{norm-q-allspace}, we  see that once Proposition \ref{pro-exist} is proved, item $(i)$ of Theorem \ref{existence} directly follows. In the following, we shall give all the main arguments for the proof of this proposition assuming some technical results which are left to the next section. 

As for the initial data at time $s_0$ defined as in \eqref{def-psi0}, we have the following properties.
\begin{proposition}[Properties of the initial data \eqref{def-psi0}]\label{pro-initial}
For each $A \geq 1$,  there exists $s_2(A) > 1 $ such that for all $s_0 \geq s_2(A)$ we have the following properties:

\begin{itemize}
\item[$i)$] There exists $\mathbb{D}_{A,s_0} \subset [-2;2] \times [-2;2]^n$  such that the mapping 
\begin{eqnarray*}
\Phi_1 : \mathbb{R}^{1 + n} & \to & \mathbb{R}^{1+n},\\
(d_0,d_1) & \mapsto & \big(\psi_0, \psi_{1}\big)
\end{eqnarray*}
 is linear, one to one from $\mathbb{D}_{A,s_0}$ onto $\hat S_A(s_0)$. Moreover 
$$\Phi_1 \left( \partial \mathbb{D}_{A, s_0} \right) \subset \partial \hat S_A(s_0).$$ 
\item[$ii)$] For all $(d_0,d_1) \in \mathbb{D}_{A, s_0}$ we have  $\psi_{d_0,d_1} \in S_A(s_0)$ with strict inequalities in the sense that
\begin{align*}
&|\psi_0|  \leq \frac{A}{s_0^2}, \quad |\psi_{1,i}| \leq \frac{A}{s_0^2}, \quad |\psi_{2,i,j}|  <  \frac{A \ln^2 s_0}{s^2_0}, \quad \forall 1 \leq i,j \leq n, \\
& \left\| \frac{\psi_-}{1 + |y|^3} \right\|_{L^{\infty}(\mathbb{R})}  <  \frac{A}{s^2_0}, \quad \psi_e \equiv 0.
\end{align*}
\end{itemize}
where $\chi(y,s_0)$ is defined in  \eqref{def-chi}, $\psi_0,   (\psi_{1,i})_{1 \leq i \leq n}, (\psi_{2,i,j})_{1 \leq i,j \leq 2}, \psi_-$, $\psi_e$ are the components of $\psi_{d_0,d_1}$ defined as in \eqref{decomp-q2}, $\psi_{d_0,d_1}$ and $\hat S_A(s)$ are defined by \eqref{def-psi0} and \eqref{def:SAhat}.
\end{proposition}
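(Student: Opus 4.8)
The statement is a direct computation of the decomposition \eqref{decomp-q2} of the explicit function $\psi_{d_0,d_1}$ from \eqref{def-psi0}, and the argument splits naturally into three steps.

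\textbf{Step 1: localization of the support.} Since $\chi(2y,s_0)=\chi_0\!\left(2|y|/(K\sqrt{s_0})\right)$ vanishes for $|y|\geq K\sqrt{s_0}$, while $\chi(y,s_0)=\chi_0\!\left(|y|/(K\sqrt{s_0})\right)$ equals $1$ for $|y|\leq K\sqrt{s_0}$, the function $\psi_{d_0,d_1}$ is supported in $\{|y|\leq K\sqrt{s_0}\}$, where $\chi(\cdot,s_0)\equiv 1$. Hence $q_b=\chi(\cdot,s_0)\psi_{d_0,d_1}=\psi_{d_0,d_1}$ and $q_e=(1-\chi(\cdot,s_0))\psi_{d_0,d_1}\equiv 0$, which already gives $\psi_e\equiv 0$; moreover $\psi_0,\psi_1,\psi_2,\psi_-$ are exactly the projections of $\psi_{d_0,d_1}$ onto $\mathcal{E}_0$, $\mathcal{E}_1$, $\mathcal{E}_2$ and $\bigoplus_{m\geq 3}\mathcal{E}_m$.

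\textbf{Step 2: the map $\Phi_1$.} Writing $\chi(2y,s_0)=1-\left(1-\chi(2y,s_0)\right)$, with $1-\chi(2y,s_0)$ supported in $\{|y|\geq K\sqrt{s_0}/2\}$, and using the Gaussian decay of $\rho$ together with parity, I would obtain, for some $c>0$,
$$\int\chi(2y,s_0)\rho\,dy=1+O(e^{-cs_0}),\quad\int y_i^2\,\chi(2y,s_0)\rho\,dy=2+O(e^{-cs_0}),\quad\int y_i\,\chi(2y,s_0)\rho\,dy=0.$$
Since $\psi_0=\int\psi_{d_0,d_1}\,\rho\,dy$ and $\psi_{1,i}=\tfrac12\int\psi_{d_0,d_1}\,y_i\,\rho\,dy$, this yields $\psi_0=\tfrac{A}{s_0^2}d_0\left(1+O(e^{-cs_0})\right)$ and $\psi_{1,i}=\tfrac{A}{s_0^2}d_{1,i}\left(1+O(e^{-cs_0})\right)$, so $\Phi_1$ is linear and is represented by $\tfrac{A}{s_0^2}$ times a matrix $O(e^{-cs_0})$-close to the identity, hence invertible for $s_0$ large. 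I would then \emph{define} $\mathbb{D}_{A,s_0}:=\Phi_1^{-1}(\hat S_A(s_0))$: as $\Phi_1^{-1}$ is $\tfrac{s_0^2}{A}$ times a matrix $O(e^{-cs_0})$-close to the identity, $\mathbb{D}_{A,s_0}$ is $O(e^{-cs_0})$-close to $[-1,1]\times[-1,1]^n$, hence contained in $[-2,2]\times[-2,2]^n$, and $\Phi_1$ restricts to a homeomorphism from $\mathbb{D}_{A,s_0}$ onto $\hat S_A(s_0)$ carrying $\partial\mathbb{D}_{A,s_0}$ onto $\partial\hat S_A(s_0)$. This proves $(i)$, together with the (non-strict) bounds $|\psi_0|\leq A/s_0^2$ and $|\psi_{1,i}|\leq A/s_0^2$ in $(ii)$.

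\textbf{Step 3: the modes $\psi_2$ and $\psi_-$.} Since $d_0+d_1\cdot y\in\mathcal{E}_0\oplus\mathcal{E}_1$, it is $L^2_\rho$-orthogonal to every entry $\mathcal{M}_{i,j}$ of $\mathcal{M}$ and to every $h_\beta$ with $|\beta|\geq 3$, so
$$\psi_{2,i,j}=-\tfrac{A}{s_0^2}\int(d_0+d_1\cdot y)\,\mathcal{M}_{i,j}(y)\left(1-\chi(2y,s_0)\right)\rho\,dy=O\!\left(\tfrac{A}{s_0^2}e^{-cs_0}\right)<\tfrac{A\ln^2 s_0}{s_0^2}$$
for $s_0$ large (using $|d_0|,|d_{1,i}|\leq 2$ and the Gaussian tail bound). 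For $\psi_-$ I would split $\mathbb{R}^n$ at $|y|=K\sqrt{s_0}/2$. On $\{|y|\leq K\sqrt{s_0}/2\}$ one has $\psi_{d_0,d_1}(y)=\tfrac{A}{s_0^2}(d_0+d_1\cdot y)$, hence $\psi_-(y)=\left(\tfrac{A}{s_0^2}d_0-\psi_0\right)+\left(\tfrac{A}{s_0^2}d_1-\psi_1\right)\cdot y-\left(\tfrac12 y^T\psi_2 y-\textup{tr}(\psi_2)\right)$, which by Step~2 and the bound on $\psi_2$ is $O\!\left(\tfrac{A}{s_0^2}e^{-cs_0}(1+|y|^2)\right)$, so $|\psi_-(y)|/(1+|y|^3)\ll A/s_0^2$ there. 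On $\{|y|\geq K\sqrt{s_0}/2\}$ I would instead use the crude bound $|\psi_-(y)|\leq|\psi_{d_0,d_1}(y)|+|\psi_0|+|\psi_1\cdot y|+|\tfrac12 y^T\psi_2 y-\textup{tr}(\psi_2)|\leq\tfrac{CA}{s_0^2}(1+|y|^2)$ together with $1+|y|^2\leq\tfrac{C}{K\sqrt{s_0}}(1+|y|^3)$ on that region, giving $|\psi_-(y)|/(1+|y|^3)\leq\tfrac{CA}{K s_0^{5/2}}<\tfrac{A}{s_0^2}$ for $s_0$ large. This establishes all remaining inequalities of $(ii)$.

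\textbf{Expected obstacle.} No step is conceptually difficult; the only place requiring care is the estimate of $\psi_-$ outside the ball $\{|y|\leq K\sqrt{s_0}/2\}$, where orthogonality is of no use and one must instead exploit that the cutoff confines the mass to the scale $|y|\sim\sqrt{s_0}$ and combine this with the decay provided by the weight $(1+|y|^3)^{-1}$. Everywhere else the estimates reduce to parity and exponentially small Gaussian tails.
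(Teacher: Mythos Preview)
Your proof is correct and follows the standard route for this kind of statement. The paper itself does not give an independent argument here: it simply refers the reader to Proposition~4.5 of Tayachi and Zaag \cite{TZpre15} for a similar proof, and the computation you carried out (support localization giving $\psi_e\equiv 0$, identifying $\Phi_1$ as $\tfrac{A}{s_0^2}$ times a near-identity matrix via parity and Gaussian tail bounds, then defining $\mathbb{D}_{A,s_0}:=\Phi_1^{-1}(\hat S_A(s_0))$, and finally estimating $\psi_2$ and $\psi_-$ by orthogonality inside the ball and by the $(1+|y|^3)^{-1}$ gain outside) is exactly the argument one finds in that reference.
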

\begin{proof} See Propositon 4.5 of Tayachi and Zaag  \cite{TZpre15} for a similar proof of this proposition. \end{proof}

From now on, we denote by $C$ the universal constant which only depends on $K$, where $K$ is introduced in \eqref{def-chi}. Let us now give the proof of Proposition \ref{pro-exist} to complete the proof of item $(i)$ of Theorem \ref{existence}. 
\begin{proof}[\textbf{Proof of Proposition \ref{pro-exist}}.] We proceed into two steps to prove Proposition \ref{pro-exist}:\\
- In the first step, we reduce the problem of controlling $q(s)$ in $S_A(s)$ to the control of $(q_0,q_1)(s)$ in $\hat S_A(s)$, where $q_0$ and $q_1$ are the component of $q$ corresponding to the positive modes defined as in \eqref{decomp-q2} and $\hat S_A$ is defined by \eqref{def:SAhat}. This means that we reduce the problem to a finite dimensional one.\\
- In the second step, we argue by contradiction to solve the finite dimensional problem thanks to a topological argument. \\

\noindent \textit{Step 1: Reduction to a finite dimensional problem.}

In this step, we show through \textit{a priori estimate} that the control of $q(s)$ in $S_A(s)$ reduces to the control of $(q_0,q_1)(s)$ in $\hat S_A(s)$. This mainly follows from a good understanding of the properties of the linear part $\mathcal{L} + V$ of equation \eqref{equa-q}. In particular, we claim the following which is the heart of our analysis.
\begin{proposition}[Control of $q(s)$ in $S_A(s)$ by $(q_0,q_1)(s)$ in $\hat S_A(s)$] \label{prop:redu} There exists $A_3 \geq 1$ such that for all $A \geq A_3$, there exists $s_3(A) \geq 1$ such that for all $s_0 \geq s_3(A)$, the following holds:\\
If $q(y,s)$ is the solution of equation \eqref{equa-q} with the initial data at time $s_0$ given by \eqref{def-psi0} with $(d_0,d_1) \in \mathbb{D}_{A,s_0}$, and $q(s) \in S_A(s)$ for all $s \in [s_0, s_1]$ for some $s_1 \geq s_0$ and $q(s_1) \in \partial S_A(s_1)$, then:\\
$(i)\;$ (Reduction to a finite dimensional problem) We have $(q_0,q_1)(s_1) \in \partial \hat S_A(s_1)$.\\
$(ii)\,$ (Transverse outgoing crossing) There exists $\delta_0 > 0$ such that
$$\forall \delta \in (0, \delta_0), \quad (q_0, q_1)(s_1 + \delta) \not \in \hat S_A(s_1 + \delta),$$
hence, $q(s_1 + \delta)\not \in S_A(s_1 + \delta)$, where  $\hat S_A$ is defined in  \eqref{def:SAhat} and $\mathbb{D}_{A,s_0}$ is introduced in Proposition   \ref{pro-initial}.
\end{proposition}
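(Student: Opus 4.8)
The plan is to prove Proposition \ref{prop:redu} following the by-now-standard scheme of Merle and Zaag \cite{MZdm97}, adapted to the present non-scaling-invariant setting where the potential $V$, the remainder $R$, the extra term $D(q,s)$, and the coefficient $h(s)$ all carry the $s$-dependence produced by the ODE \eqref{euqq-ODE-psi}. The key point is to establish, via a priori estimates on the components of $q$ in the decomposition \eqref{decomp-q2}, that as long as the solution stays in $S_A(s)$ and touches its boundary $\partial S_A(s_1)$ at some time $s_1$, the saturated component can only be one of the finitely many ``positive modes'' $q_0$ or $q_{1,i}$. Concretely, I would write Duhamel's formula for $q$ using the semigroup generated by $\mathcal{L}+V$ (or, more conveniently, the kernel of $\mathcal{L}$ together with a perturbative treatment of $V$, $B(q)$, $R$, $D$), project it onto each of the five pieces $q_0$, $q_1$, $q_2$, $q_-$, $q_e$, and derive scalar/functional differential inequalities for each. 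Using the estimates $|B(q)|\le C|q|^{\min(2,p)}$, $\|R(s)\|_{L^\infty}+\|D(q,s)\|_{L^\infty}\le C/s$ (Lemmas \ref{inside-D}, \ref{lemma-of-rest-termes}, \ref{estimate-B-q}), and the behavior of $V$ inside and outside the blowup region recorded above, one shows: (a) $q_-/(1+|y|^3)$ satisfies a differential inequality with a strictly negative linear rate (because modes $|\beta|\ge 3$ see eigenvalues $\le -1/2$), so starting from $0$ it stays strictly below $A/s^2$; (b) $q_e$, thanks to $V\to -p/(p-1)<0$ in the outer region and a cut-off argument, stays strictly below $A^2/\sqrt s$; (c) the null mode $q_2$ satisfies $q_{2}'=O(\|q\|^2)+O(1/s)\cdot(\text{lower order})$ with the crucial gain that the quadratic and forcing terms are $o(\ln^2 s/s^2)$, so $q_2$ stays strictly below the threshold $A^2\ln^2 s/s^2$; only (d) the modes $q_0$ (eigenvalue $1$) and $q_{1,i}$ (eigenvalue $1/2$) are genuinely unstable and can reach their bounds $A/s^2$. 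This proves item $(i)$.

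For item $(ii)$, the transverse crossing, I would exploit the fact that on $\partial\hat S_A(s_1)$ one of $q_0,q_{1,i}$ equals $\pm A/s_1^2$, and the corresponding differential inequality reads, say, $q_0'(s_1)\ge q_0(s_1) - C A^{?}/s_1^{?}$ with the linear term dominating because eigenvalue $1>0$: more precisely $s^2 q_0$ and $s^2 q_{1,i}$ satisfy ODEs of the form $\frac{d}{ds}(s^2 q_0) \ge (1-o(1))\, s^2 q_0 - o(A)$ and $\frac{d}{ds}(s^2 q_{1,i}) \ge (\tfrac12-o(1))\,s^2 q_{1,i} - o(A)$ for $s_0$ large. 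Hence when $s^2 q_0 = \pm A$ (boundary of $\hat S_A$), its time-derivative has the same sign as $\pm A$ and is bounded away from zero, so the point exits $\hat S_A(s)$ immediately and cannot re-enter for small $\delta>0$; and since $(q_0,q_1)(s_1+\delta)\notin\hat S_A(s_1+\delta)$ forces, by the definition of $S_A$, that $q(s_1+\delta)\notin S_A(s_1+\delta)$, we get the second claim. The strictness in Proposition \ref{pro-initial}$(ii)$ for $q_2,q_-,q_e$ at the initial time is what guarantees that at $s_1$ the saturated component is indeed among $q_0,q_{1,i}$ and not one of the others.

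The main obstacle, and the place where this paper genuinely differs from \cite{MZdm97}, is controlling the non-scaling-invariant error terms: the remainder $R(y,s)$ coming from the fact that $\varphi$ is only an approximate profile (its projections onto $h_0$ and $h_2$ must be computed precisely, as the $h_2$-projection of $R$ feeds directly into the $q_2$ inequality and must be shown to be $O(1/s^2)$ or better, compatible with the threshold $A^2\ln^2 s/s^2$), and the genuinely new term $D(q,s)$ together with the $s$-dependent coefficient $h(s)=\tfrac{1}{p-1}(1-\tfrac{\alpha}{s}+\cdots)$ from \eqref{asym-pto-h-s}. One has to track how the $\alpha/s$ correction in $h(s)$ — which has no analogue in the scaling-invariant case — propagates through the projections; it is precisely this correction that is responsible for the $|\ln|x||^{-\alpha/(p-1)}$ factor in the final profile $u^*$ in \eqref{sharp-u-*}, and one must verify it does not destabilize any of the modes, in particular that it only produces forcing of size $O(\ln s/s^2)$ on $q_2$ and $O(1/s^3)$-type corrections elsewhere, all absorbable. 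Handling $\psi_1(s)=\psi(T-e^{-s})$ and the logarithmic quantities $L(v,s)$ uniformly for $v$ near $1$ in the inner region and for $v$ small in the outer region (via Lemmas \ref{asymptotic-zp-h-s-ln-al}, \ref{function-h(s)}) is the technical heart, but once those pointwise bounds are in hand the projection/differential-inequality machinery goes through as in the cited works, and the finite-dimensional topological (index-theory) argument in Step 2 closes the proof exactly as in \cite{MZdm97}.
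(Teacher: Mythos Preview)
Your plan is correct and matches the paper's approach closely: ODE projections for $q_0,q_1,q_2$ (the paper's Proposition~\ref{prop:dyn}, Part~1) combined with a Duhamel argument through the kernel of $\mathcal{L}+V$ for $q_-,q_e$ (Part~2 and Lemma~\ref{dynamic-K-feym}), then the strict-improvement/contradiction argument for $q_2,q_-,q_e$ (Proposition~\ref{control-q(s)}) and the transverse-exit computation for $q_0,q_1$ from \eqref{odeq0q1}, exactly as you outline. One quantitative slip to watch: the forcing on the $q_2$ equation must be $O(\ln s/s^3)$, not $O(\ln s/s^2)$ or ``$O(1/s^2)$ or better'' as you wrote for the $h_2$-projection of $R$; with forcing of size $1/s^2$ the ODE $q_2'+\tfrac{2}{s}q_2=O(1/s^2)$ integrates to $q_2=O(1/s)$, which blows the threshold $A^2\ln^2 s/s^2$. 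The paper obtains the required extra factor of $1/s$ because $R=c_p/s^2+O((1+|y|^4)/s^3)$ (Lemma~\ref{lemma-of-rest-termes}) so the constant $c_p/s^2$ is killed by the $h_2$-projection, and $|D|\le C\ln s(1+|y|)^4/s^3$ in the inner region (Lemma~\ref{inside-D}); you should also make explicit the damping term $-\tfrac{2}{s}q_2$ coming from the projection of $Vq$, since it is what makes the $q_2$ inequality close.
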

Let us suppose for the moment that Proposition \ref{prop:redu} holds. Then we can take advantage of a topological argument quite similar to that already used in \cite{MZdm97}. \\

\noindent \textit{Step 2: A basic topological argument.}

From Proposition \ref{prop:redu}, we claim that there exists $(d_0,d_1) \in \mathbb{D}_{A,s_0}$ such that equation \eqref{equa-q} with initial data \eqref{def-psi0} has a solution 
$$q_{d_0,d_1}(s) \in S_A(s), \quad \forall s \in [s_0, +\infty),$$
for suitable choice of the parameters $A, K, s_0$. Since the argument is analogous as in \cite{MZdm97}, we only give the main ideas. 

Let us consider $s_0, K, A$ such that Propositions \ref{pro-initial}  and \ref{prop:redu}  hold. From Proposition \ref{pro-initial}, we have 
$$\forall (d_0, d_1) \in \mathbb{D}_{A,s_0}, \quad q_{d_0,d_1}(y,s_0):=\psi_{d_0,d_1} \in S_A(s_0),$$
where $\psi_{d_0,d_1}$ is defined by \eqref{def-psi0}. Since the initial data belongs to $L^\infty$, we then deduce from the local existence theory for the Cauchy problem of \eqref{equ:problem} in $L^\infty$ that we can define for each $(d_0,d_1) \in \mathbb{D}_{A,s_0}$ a maximum time $s_*(d_0,d_1) \in [s_0, +\infty)$ such that 
$$q_{d_0,d_1}(s) \in S_A(s), \quad \forall s \in [s_0, s_*).$$
If $s_*(d_0,d_1) = +\infty$ for some $(d_0, d_1) \in \mathbb{D}_{A,s_0}$, then we are done. Otherwise, we argue by contradiction and assume that $s_*(d_0, d_1) < +\infty$ for all $(d_0, d_1) \in \mathbb{D}_{A,s_0}$. By continuity and the definition of $s_*$, we deduce that $q_{d_0,d_1}(s_*)$ is on the boundary of $S_A(s_*)$. From item $(i)$ of Proposition \ref{prop:redu}, we have 
$$(q_0,q_1)(s_*) \in \partial \hat S_A(s_*).$$
Hence, we may define the rescaled function
\begin{align*}
\Gamma: \; \mathbb{D}_{A,s_0} &\mapsto \partial \big([-1,1]^{1 + n}\big)\\
 (d_0,d_1) &\to \frac{s_*^2}{A}(q_0, q_1)(s_*).
\end{align*} 
From item $(i)$ of Proposition \ref{pro-initial}, we see that if $(d_0,d_1) \in \partial \mathbb{D}_{A,s_0}$, then 
$$q(s_0) \in S_A(s_0), \quad (q_0,q_1)(s_0) \in \partial \hat S_A(s_0).$$
From item $(ii)$ of Proposition \ref{prop:redu}, 
we see that $q(s)$ must leave $S_A(s)$ at $s = s_0$, 
thus, $s_*(d_0,d_1) = s_0$. Therefore, the restriction of  $\Gamma$to $\partial \mathbb{D}_{A,s_0}$ is  homeomorphic  to the  identity mapping, which is impossible thanks to index theorem, and the contradiction is obtained. This concludes the proof of Proposition \ref{pro-exist} as well as item $(i)$ of Theorem \ref{existence}, assuming that Proposition \ref{prop:redu} holds.
\end{proof}

We now give the proof of item $(ii)$ of Theorem \ref{existence}.
\begin{proof}[Proof of item $(ii)$ of Theorem \ref{existence}] The existence of $u^*$ in $C^2(\mathbb{R}^n \setminus \{0\})$ follows from the technique of Merle \cite{FM1992}. Here, we want to  find an equivalent formation for $u^*$ near  the  blowup point $x = 0$. The case $\alpha = 0$ was treated in \cite{ZAAihn98}. When $\alpha \neq 0$, we follow the method of \cite{ZAAihn98}, and no new idea is needed. Therefore, we just sketch the main steps for the sake of completeness.

We consider $K_0 > 0$ the constant to be fixed large enough, and $|x_0| \neq  0 $ small enough. Then, we introduce the following function 
\begin{equation}\label{equa-upsilon-xi-tau}
\upsilon (x_0, \xi, \tau)  = \psi^{-1} ( t_0(x_0))u(x,t), 
\end{equation}
where $(\xi, \tau) \in \mathbb{R}^n \times \left[ - \frac{t_0(x_0)}{T - t_0(x_0)}, 1 \right)$, and 
\begin{equation}\label{relation-x-and-xi-tau}
(x,t) = \big(x_0 + \xi\sqrt{T - t_0(x_0)},t_0(x_0) + \tau (T - t_0(x_0))\big),
\end{equation}
with $t_0(x_0) $  being uniquely determined  by 
\begin{equation}\label{equa-x-0-and-t-0-x-0}
|x_0| = K_0 \sqrt{(T - t_0(x_0)) |\ln(T -  t_0(x_0))|}.
\end{equation}
From \eqref{equa-upsilon-xi-tau}, \eqref{relation-x-and-xi-tau} , \eqref{equa-x-0-and-t-0-x-0} and \eqref{estime-theorem} we derive that
$$ \sup_{|\xi| < 2 |\ln(T - t_0(x_0))|^{\frac{1}{4}}} \left| v (x_0, \xi, 0) - \varphi_0(K_0)\right| \leq \frac{C}{ 1 + (|\ln(T - t_0(x_0))|^{\frac{1}{4}})} \to 0 \quad \text{ as } x_0 \to 0,$$
where $\varphi_0 (x)= \left( 1 + \frac{(p - 1)x^2}{4p} \right)^{\frac{1}{p-1}}$. As in \cite{ZAAihn98}, we use the continuity with respect to initial data for equation \eqref{equ:problem} associated to a space-localization in the ball $B(0, |\xi| < |\ln(T - t_0(x_0))|^{\frac{1}{4}})$ to derive
\begin{equation}\label{sup-v-xi-tau-apro-1}
\sup_{|\xi| <  |\ln(T - t_0(x_0))|^{\frac{1}{4}}, \tau \in [0,1)} \left| v (x_0, \xi, \tau) - \hat v_{K_0} (\tau) \right| \leq  \epsilon(x_0) \to 0,\quad \text{ as } x_0 \to 0,
\end{equation}
where $\hat v_{K_0} (\tau)  = \left( (1 - \tau) + \frac{(p-1) K_0^2}{4 p}\right)^{-\frac{1}{p-1}}$.\\
From \eqref{relation-x-and-xi-tau} and \eqref{sup-v-xi-tau-apro-1}, we deduce
\begin{equation}\label{limit-u-start}
u^* (x_0) = \lim_{t \to T} u(x_0, t) = \psi(t_0(x_0)) \lim_{\tau \to 1} v (x_0, 0, \tau) \sim \psi(t_0(x_0)) \left(\frac{p -  1}{ 4 p} \right)^{- \frac{1}{p-1}}.
\end{equation} 
Using the relation \eqref{equa-x-0-and-t-0-x-0}, we find that
\begin{equation}\label{asymp-T-t-0-x-0} 
  T  - t_0 \sim \frac{|x_0|^2}{ 2 K_0 |\ln |x_0||} \text{ and  }\ln(T - t_0(x_0)) \sim 2 \ln (|x_0|), \quad \text{ as } x_0 \to 0,
  \end{equation}
The formula \eqref{sharp-u-*} then follows from Lemma  \ref{assymptotic-psi-T}, \eqref{limit-u-start} and \eqref{asymp-T-t-0-x-0}. This concludes the proof of Theorem \ref{existence}, assuming that Proposition \ref{prop:redu} holds.
 \end{proof}

\section{Proof of Proposition \ref{prop:redu}.}
This section is devoted to the proof of Proposition \ref{prop:redu},
 which is the heart of our analysis. We proceed into two parts. In the first part, we derive \textit{a priori estimates} on $q(s)$ in $S_A(s)$. In the second part, we show that the new bounds are better than those defined in $S_A(s)$, except for the first two components $(q_0, q_1)$. This means that the problem is reduced to the control of a finite dimensional function $(q_0,q_1)$, which is the conclusion of item $(i)$ of Proposition \ref{prop:redu}. Item $(ii)$ of Proposition \ref{prop:redu} is just direct consequence of the dynamics of the modes $q_0$ and $q_1$. Let us start the first part.

\subsection{A priori estimates on $q(s)$ in $S_A(s)$.}
In this part we derive the \textit{a priori estimates} on the components
 $q_{2}, q_-, q_e$ which implies the conclusion of Proposition \ref{prop:redu}.
 Firstly, let us give some dynamics of $q_0, q_1 = (q_{1,i})_{1 \leq i \leq n}$ and $q_2 = (q_{2,i,j})_{1 \leq i,j \leq n}$. More precisely, we claim the following. 

\begin{proposition}[Dynamics of equation \eqref{equa-q}]\label{prop:dyn} There exists $A_4\geq 1,$ such that $\forall A \geq A_4$ there exists $s_4(A)\geq 1$, such that the following holds for all $s_0 \geq s_4(A)$: Assume that for all $s \in [s_0,s_1]$ for some $s_1\geq  s_0$, $q(s) \in S_A(s)$, then the following holds for all $s \in [s_0,s_1]$:\\
\noindent $(i)$ (ODE satisfied by the positive and null modes)
\begin{equation}\label{odeq0q1}
m = 0, 1, \quad \left|q_m' (s) - \left(1 - \frac{m}{2}\right)q_m(s)\right| \leq \frac{C}{s^2},
\end{equation}
and 
\begin{equation}\label{odeq2}
\left|q_2'(s)+ \frac{2}{s} q_2(s)\right| \leq \frac{C\ln s}{s^3}.
\end{equation}
\noindent $(ii)$ (Control of the negative and outer parts) 
\begin{align}
\left\|\frac{q_-(y,s)}{1 + |y|^3}\right\|_{L^\infty} &\leq \frac{C}{s^2}\left((s - \sigma) + e^{-\frac{s - \sigma}{2}}A + e^{-(s-\sigma)^2}A^2\right), \label{conq-} \\
\left\|q_e(s)\right\|_{L^\infty} &\leq \frac{C}{\sqrt{s}} \left((s - \sigma) + A^2e^{-\frac{s - \sigma}{p}} + Ae^{s - \sigma}\right).\label{conqe}
\end{align}
\end{proposition}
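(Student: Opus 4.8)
I would follow the scheme of Merle--Zaag \cite{MZdm97}, in the form adapted to non-scale-invariant source terms in \cite{NZsns16} and to a profile carrying the extra correction $\tfrac{n}{2ps}$ in \cite{TZpre15}. The plan is to project equation \eqref{equa-q}, after localization by the cutoff $\chi$, onto the spectral decomposition of $\mathcal{L}$: the finitely many non-negative modes $q_0,q_1,q_2$ should satisfy an ODE system whose linear part is diagonal with eigenvalues $1,\tfrac12,0$, while the negative part $q_-$ and the outer part $q_e$ are to be handled by a Duhamel argument driven by the contracting semigroups of $\mathcal{L}$ on $\{|\beta|\ge 3\}$ and of $\mathcal{L}+V$ outside the parabolic region. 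Throughout, the standing assumption is $q(s)\in S_A(s)$ on $[s_0,s_1]$, together with the already recorded bounds $|B(q)|\le C|q|^{\min(2,p)}$ and $\|R(s)\|_{L^\infty}+\|D(q,s)\|_{L^\infty}\le C/s$ (Lemmas \ref{estimate-B-q}, \ref{lemma-of-rest-termes}, \ref{inside-D}) and properties i)--ii) of the potential $V$.

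\textbf{Step 1: the ODE system.} First I would multiply \eqref{equa-q} by $\chi$ and by $1-\chi$ and commute the cutoff past $\mathcal{L}$, obtaining
$$\partial_s q_b = (\mathcal{L}+V)q_b + \chi\big(B(q)+R+D\big) + E_b,\qquad \partial_s q_e = (\mathcal{L}+V)q_e + (1-\chi)\big(B(q)+R+D\big) + E_e,$$
where the commutator terms $E_b,E_e$ are supported on the overlap annulus $K\sqrt s\le|y|\le 2K\sqrt s$ and are negligible in all relevant norms (via \eqref{norm-q-inside} and the Gaussian decay of $\rho$). Projecting the $q_b$-equation on $\tilde h_\beta$, $|\beta|\le 2$, and using self-adjointness of $\mathcal{L}$ gives, for $m=0,1,2$,
$$q_m'(s)=\Big(1-\tfrac m2\Big)q_m(s)+\Pi_m(Vq_b)+\Pi_m\big(\chi(B(q)+R+D)\big)+\Pi_m(E_b),$$
where $\Pi_m$ denotes the corresponding projection. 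Since $\rho$ concentrates on bounded $y$, where $V(y,s)=-\tfrac{|y|^2}{4s}+\tfrac{n}{2s}+O(s^{-2})$, the computation of $\Pi_m(Vq_b)$ is explicit; for $m=2$ it produces the resonant drift $-\tfrac2s q_2(s)$ (the quadratic-in-$y$ part of $V$ acting on the $m=2$ part of $q_b$), which is exactly the term displayed in \eqref{odeq2}, while for $m=0,1$ it only contributes at order $s^{-2}$. Combined with the quadratic term (of order $A^4\ln^4 s/s^4$) and the source projections $\Pi_m(\chi(R+D))$, this yields \eqref{odeq0q1} and \eqref{odeq2}.

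\textbf{Step 2: the negative and outer modes.} For $q_-$ I would write the Duhamel formula for the $q_b$-equation projected onto $\{|\beta|\ge 3\}$, starting from a time $\sigma$, and use the standard contraction $\|e^{\theta\mathcal{L}}r_-\|_*\le Ce^{-\theta/2}\|r_-\|_*$ in the norm $\|r\|_*=\|r/(1+|y|^3)\|_{L^\infty}$; feeding in $Vq_b$, $\chi B(q)$, $\chi R$, $\chi D$ and $E_b$ with $q\in S_A$ and integrating gives \eqref{conq-}, the three terms there coming respectively from the time-integral of the sources, the linear evolution of the negative part of the data, and the (super-exponentially small) contribution of the outer part of the data through the overlap region. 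For $q_e$ I would work in $L^\infty$ with the semigroup of $\mathcal{L}+V$: choosing $K$ large enough that $V(y,s)\le-\tfrac{p}{p-1}+\epsilon$ on $\textup{supp}\,q_e$ (possible since $\varphi\to 0$ there, cf.\ property ii)), the Merle--Zaag kernel estimate gives $\|e^{\theta(\mathcal{L}+V)}(1-\chi)r\|_{L^\infty}\le Ce^{-\theta/p}\|r\|_{L^\infty}$ up to lower-order terms, and the same bookkeeping, with the only genuinely new input being $\|D\|_{L^\infty}\le C/s$ and the control of $E_e$ by \eqref{norm-q-inside}, yields \eqref{conqe}.

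\textbf{Main obstacle.} The semigroup bookkeeping in Steps 1--2 is routine; the delicate point, and the place where the non-scale-invariance of \eqref{equ:problem} really bites, is the sharp estimate in Step 1 of the low-mode projections $\Pi_m(\chi(R+D))$ at order $s^{-2}$ for $m=0,1$ (and $\ln s/s^3$ for $m=2$), rather than merely $s^{-1}$ as the crude $L^\infty$ bounds give. This forces a pointwise expansion of $R$ and especially of $D$ for bounded $y$, using the asymptotics \eqref{asym-pto-h-s} of $h(s)$, the growth $\ln(\psi_1^2(s)+2)\sim\tfrac{2s}{p-1}$, and the structure of $L$ in \eqref{def:L-first}, in order to exhibit the cancellations built into the choice of profile $\varphi$ (in particular the role of the $\tfrac{n}{2ps}$ correction). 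Once those refined pointwise estimates are secured, the remainder of the proof is a direct transcription of \cite{MZdm97,TZpre15,NZsns16}.
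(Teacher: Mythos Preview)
Your Step 1 and your identification of the main obstacle are correct and match the paper closely: projecting \eqref{equa-q} onto the low modes, together with the pointwise expansions of $R$ and $D$ for bounded $y$ (Lemmas \ref{lemma-of-rest-termes} and \ref{inside-D}, which encode exactly the cancellation you single out), gives \eqref{odeq0q1}--\eqref{odeq2} as you describe.

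Your Step 2, however, is organized differently from the paper and, as written, has a gap. The paper does \emph{not} split into separate Duhamel formulas for $q_b$ and $q_e$; it writes a single Duhamel formula \eqref{Duhamel-q} for $q$ itself with the fundamental solution $\mathcal K(s,\sigma)$ of $\partial_s-(\mathcal L+V)$, and then applies the decomposed kernel estimates of Lemma \ref{dynamic-K-feym} (imported from \cite{BKnon94,NZens16}) to each piece $\vartheta_i$. The reason this matters is that $V$ is \emph{not} uniformly small on $\mathrm{supp}\,q_b$: item $i)$ of Lemma \ref{lemma-of-rest-termes} gives only $|V|\le C(1+|y|^2)/s$, which for $|y|\sim K\sqrt s$ is of order one (indeed $V\to -\tfrac{p}{p-1}$ there). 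Hence in your scheme the source $P_-(Vq_-)$ has size $\|V\|_{L^\infty}\|q_-\|_*\sim\|q_-\|_*$, and feeding it through the pure $e^{\theta\mathcal L}$ Duhamel produces, via Gronwall, a factor $e^{\|V\|_\infty(s-\sigma)}$ with $\|V\|_\infty=\tfrac{p}{p-1}>\tfrac12$; this kills the $e^{-(s-\sigma)/2}A$ decay in \eqref{conq-} and forces an $A$-dependence into the $(s-\sigma)$ term that the subsequent bootstrap (Proposition \ref{control-q(s)}) cannot close. The fix is precisely to keep $V$ inside the linear evolution and invoke the fine mapping properties of the $\mathcal L+V$ kernel in the norms $\|\cdot/(1+|y|^3)\|_{L^\infty}$ and $L^\infty$, which is the content of Lemma \ref{dynamic-K-feym}; with that lemma in hand, the bounds on $(\vartheta_i)_-$ and $(\vartheta_i)_e$ in Lemma \ref{control-prin-q-e-q-} follow by direct substitution and summing gives \eqref{conq-}--\eqref{conqe}.
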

\begin{proof} We proceed in two parts: \\
- In the first part we project equation \eqref{equa-q} to write ODEs satisfied by $q_m$ for $m = 0, 1,2$.\\
- In the second part we use the integral form of equation \eqref{equa-q} and the dynamics of the linear operator $\mathcal{L} + V$ to derive a priori estimates on $q_-$ and $q_e$. \\

\noindent -\textit{Part 1: ODEs satisfying by the positive and null modes.} We give the proof of \eqref{odeq0q1} and \eqref{odeq2} in this part. We only deal with the proof of \eqref{odeq2} because the same proof holds for \eqref{odeq0q1}. By formula \eqref{defini-of_q_2} and equation \eqref{equa-q}, we write for each $1 \leq i,j \leq n$, 
\begin{equation}\label{remainder-q-2}
\left|q_{2,i,j}'(s)  -  \int \left[ \mathcal{L} q + Vq   + B(q) + R(y,s)   + D(q,s) \right] \chi  \left( \frac{y_i y_j }{4} - \frac{\delta_{i,j}}{2} \right) \rho dy  \right|  \leq C e^{-s}.
\end{equation}
Using the assumption $q(s) \in S_A(s)$ for all  $s \in [s_0, s_1]$, we derive the following estimates for all $s \in [s_0,s_1]$: 
$$ \left|\int   \mathcal{L}(q)\chi  \left( \frac{y_i y_j }{4} - \frac{\delta_{i,j}}{2} \right)\rho dy  \right| \leq \frac{C}{s^3},$$
from Lemmas   \ref{inside-D}, \ref{lemma-of-rest-termes} and \ref{estimate-B-q}
\begin{eqnarray*}
 \left| \int   V q \chi  \left( \frac{y_i y_j }{4} - \frac{\delta_{i,j}}{2} \right) \rho  dy  + \frac{2 }{s} q_{2,i,j}(s)\right| &\leq & \frac{C A}{s^3},\\
 \left|\int B(q) \chi  \left( \frac{y_i y_j }{4} - \frac{\delta_{i,j}}{2} \right) \rho dy \right| &\leq & \frac{C }{s^3},\\
\left|  \int  R \chi  \left( \frac{y_i y_j }{4} - \frac{\delta_{i,j}}{2} \right) \rho  dy \right| &\leq & \frac{C}{s^3}, \\
\left|  \int D(q, s) \chi  \left( \frac{y_i y_j }{4} - \frac{\delta_{i,j}}{2} \right) \rho dy  \right| &\leq & \frac{C\ln s}{s^3}.
\end{eqnarray*}
Gathering all these above estimates to \eqref{remainder-q-2} yields 
$$\left|q'_{2,i,j} + \frac{2}{s}q_{2,i,j}\right| \leq \frac{C\ln s}{s^3},$$
which concludes the proof of \eqref{odeq2}.\\

\noindent -\textit{Part 2: Control of the negative and outer parts.} We give the proof of \eqref{conq-} and \eqref{conqe} in this part. the control of $q_-$ and $q_e$ is mainly based on the dynamics of the linear operator $\mathcal{L} + V$. In particular,  we use the following integral form of equation \eqref{equa-q}: for each $s \geq \sigma \geq s_0$,
\begin{equation}\label{Duhamel-q}
q(s)  = \mathcal{K}(s,\sigma) q(\sigma) + \int_{\sigma}^s \mathcal{K}(s,\tau) \left[ B(q)(\tau)  + R(\tau) + D(q,\tau)\right] d \tau = \sum_{i=1}^4 \vartheta_i(s,\sigma),
\end{equation}
where $\{\mathcal{K}(s,\sigma)\}_{s \geq \sigma}$ is defined by
\begin{equation}\label{fundamental-sol}
\left\{ \begin{array}{l}
\partial_s \mathcal{K}(s,\sigma)  = (\mathcal{L} + V)  \mathcal{K}(s,\sigma),\quad  s > \sigma,\\
\mathcal{K}(\sigma,\sigma) = Id,
\end{array}
\right.
\end{equation}
and 
\begin{align*}
\vartheta_1(s,\sigma) &= \mathcal{K}(s,\sigma) q(\sigma), \quad \vartheta_2(s,\sigma) = \int_{\sigma}^s \mathcal{K}(s,\tau) B(q)(\tau) d \tau,\\
\vartheta_3(s,\sigma) &= \int_{\sigma}^s \mathcal{K}(s,\tau) R(.,\tau) d \tau, \quad \vartheta_4(s,\sigma) = \int_{\sigma}^s \mathcal{K}(s,\tau) D(q,\tau) d \tau.
\end{align*}

From \eqref{Duhamel-q}, it is clear to see the strong influence of the kernel $\mathcal{K}$ in this formula. It is therefore convenient to recall the following result which the dynamics of the linear operator $\mathcal{K} = \mathcal{L} + V$. 

\begin{lemma}\label{dynamic-K-feym}\textbf(A priori estimates of the linearized operator in the  decomposition in \eqref{decomp-q2}). For all $\rho^* \geq 0$, there exists $s_5(\rho^*)  \geq 1$, such that if $\sigma \geq s_5(\rho^*)$ and $v \in L^{2}_{\rho}$ satisfying
\begin{equation}\label{condition-q-sigma}
\sum_{m=0}^2 |v_m| + \left\|\frac{v_-}{1 + |y|^3}\right\|_{L^{\infty}}   +\|v_e\|_{L^{\infty}} < \infty.
\end{equation} 
Then, $\forall s \in [\sigma, \sigma + \rho^*]$  the function $\theta(s) = \mathcal{K}(s,\sigma) v $ satisfies 
\begin{equation}\label{control-K-q-sigma1}
\begin{array}{l}
\left\|\frac{\theta_-(y,s)}{1 + |y|^3}\right\|_{L^{\infty}} \leq \frac{C e^{s -\sigma} \left( (s - \sigma)^2   +1 \right)}{s} \left(  |v_0|  + |v_1|  + \sqrt s |v_2|\right)\\
\quad \quad \quad \quad \quad + C e^{-\frac{(s-\sigma)}{2}} \left\|\frac{v_-}{1 + |y|^3}\right\|_{L^{\infty}} + C \frac{e^{-(s-\sigma)^2}  }{s^{\frac{3}{2}}} \|v_e\|_{L^{\infty}},
\end{array}
\end{equation}
and
\begin{equation}\label{control-K-q-e}
\|\theta_e(y,s)\|_{L^{\infty}} \leq  C e^{s -\sigma} \left(  \sum_{l=0}^2 s^{\frac{l}{2}} |v_l|    +s^{\frac{3}{2}} \left\|\frac{v_-}{1  +|y|^3}\right\|_{L^{\infty}}\right) + C e^{-\frac{s -\sigma}{p}} \|v_e\|_{L^{\infty}}.
\end{equation}
\end{lemma}
\begin{proof} The proof of this result was given by Bricmont and Kupiainen \cite{BKnon94} in one dimensional case. It was then extended in higher dimensional case in \cite{NZens16}. We kindly refer interested readers to Lemma 2.9 in \cite{NZens16} for a detail of the proof.
\end{proof}

In view of formula \eqref{Duhamel-q}, we see that Lemma \eqref{dynamic-K-feym} plays an important role in deriving the new bounds on the components $q_-$ and $q_e$. Indeed, given bounds on the components of $q$, $B(q)$, $D(q)$ and $R$, we directly apply Lemma \ref{dynamic-K-feym} with $\mathcal{K}(s, \sigma)$ replaced by $\mathcal{K}(s, \tau)$ and then integrating over $\tau$ to obtain estimates on $q_-$ and $q_e$. In particular, we claim the following which immediately follows \eqref{conq-} and \eqref{conqe} by addition. 

\begin{lemma}\label{control-prin-q-e-q-}
For all $\tilde A \geq 1, A \geq 1, \rho^* \geq 0$,  there exists $s_6(A, \rho^*) \geq 1$ such that $\forall s_0 \geq s_6(A,\rho^*)$ and $q(s) \in S_A(s), \forall s \in [\sigma, \sigma + \rho^*] \text{ where } \sigma \geq s_0$. Then, we have the following properties:
\begin{itemize}
\item[$a)$ ] Case $\sigma  \geq s_0$: for all $ s \in [\sigma, \sigma + \rho^*] $,
\begin{itemize}
\item[$i)$] (The linear term $\vartheta_1(s,\sigma)$)
\begin{eqnarray*}
\left\|\frac{(\vartheta_1(s,\sigma))_-}{1 + |y|^3} \right\|_{L^{\infty}} &\leq & C  \frac{\left( 1 +  e^{-\frac{s  -\sigma}{2}} A + e^{-(s -\sigma)^2} A^2\right)}{s^2},\\
\| (\vartheta_1(s,\sigma))_e\|_{L^{\infty}} &\leq & C \frac{A^2 e^{- \frac{s -\sigma}{p} } + A e^{s -\sigma}}{s^{\frac{1}{2}}}.
\end{eqnarray*}
\item[$ii)$] (The quadratic term $\vartheta_2(s,\sigma)$)
\begin{eqnarray*}
\left\|\frac{(\vartheta_2(s,\sigma))_-}{1 + |y|^3} \right\|_{L^{\infty}} &\leq & \frac{C(s - \sigma)}{ s^{2 + \epsilon}}, \quad \| (\vartheta_2(s,\sigma))_e\|_{L^{\infty}} \leq  \frac{C (s - \sigma)}{ s^{\frac{1}{2}  +\epsilon}}.
\end{eqnarray*}
where $\epsilon = \epsilon(p) > 0$.
 \item[$iii)$] (The correction term $\vartheta_3(s,\sigma)$ )
\begin{eqnarray*}
\left\|\frac{(\vartheta_3(s,\sigma))_-}{1 + |y|^3} \right\|_{L^{\infty}} &\leq & \frac{C (s  - \sigma) }{ s^{2}}, \quad \| (\vartheta_3(s,\sigma))_e\|_{L^{\infty}} \leq  \frac{C (s  -\sigma ) }{ s^{\frac{3}{4} }}.
\end{eqnarray*}
\item[$iv)$] (The nonlinear term $\vartheta_4(s,\sigma)$)
\begin{equation*}
\left\|\frac{(\vartheta_4(s,\sigma))_-}{1 + |y|^3} \right\|_{L^{\infty}} \leq  \frac{C (s - \sigma)}{ s^{2 }}, \quad \| (\vartheta_4(s,\sigma))_e\|_{L^{\infty}} \leq  \frac{C (s -\sigma ) }{ s^{\frac{3}{4}  }}.
\end{equation*}
 \end{itemize}
\item[$b)$] Case $\sigma  =  s_0$, we assume in addition
\begin{align*}
|q_m(s_0)|  \leq \frac{\tilde A}{s_0^2}, \quad |q_2(s_0)| \leq \frac{\tilde A \ln^2 s_0}{s_0^2},\\
\left\|\frac{q_-(y,s_0)}{1 + |y|^3}\right\|_{L^{\infty}} \leq \frac{\tilde A}{s_0^2}, \quad \|q_e(s_0)\|_{L^{\infty}} \leq \frac{\tilde A}{\sqrt{s_0}}.
\end{align*}
Then, for  all $s \in [s_0,s_0 + \rho^*]$ we have $a)$ and the following properties:
\begin{eqnarray*}
\left\|\frac{(\vartheta_1(s,s_0))_-}{1  + |y|^3}\right\|_{L^{\infty}} &\leq & \frac{C \tilde A}{s^2}, \quad \|(\vartheta_1(s,s_0))_e\|_{L^{\infty}} \leq  \frac{C \tilde A (1 + e^{s - s_0})}{\sqrt s}.
\end{eqnarray*}
\end{itemize}
\end{lemma}
\begin{proof} The proof simply follows from definition of the set $S_A$ and Lemma \ref{dynamic-K-feym}. 
In particular, we make use Lemmas \ref{inside-D} , \ref{lemma-of-rest-termes} and \ref{estimate-B-q}  to derive the bounds on the components of the term $B$, $D$ and $R$ as follows:
$$
\sum_{m \in \mathbb{N}^n, |m| = 0}^2\left| B(q)_{m}(s)\right| \leq  \frac{C}{s^3}, \quad \left\| \frac{B(q)_{-}(s)}{1 + |y|^3}\right\|_{L^{\infty}}  \leq   \frac{C  }{s^{2 + \epsilon }}, \quad \left\| B(q)_{e}(s) \right\|_{L^{\infty}} \leq \frac{C }{s^{\frac{1}{2} + \epsilon}},
$$
and
$$
\sum_{m \in \mathbb{N}^n, |m| = 0}^2\left| R_{m}(s)\right| \leq  \frac{C}{s^2}, \quad \left\| \frac{R_{-}(s)}{1 + |y|^3}\right\|_{L^{\infty}}  \leq   \frac{C  }{s^{2 + \frac{1}{2} }}, \quad \left\| R_{e}(s) \right\|_{L^{\infty}} \leq \frac{C}{s^{\frac{3}{4} }},
$$
and
$$
\sum_{m \in \mathbb{N}^n, |m| = 0}^2\left| D(q)_{m}(s)\right| + \left\|\frac{D(q)_{-}(s)}{1 + |y|^3}\right\|_{L^{\infty}} \leq  \frac{C\ln s}{s^3}, \quad \left\| D(q)_{e}(s) \right\|_{L^{\infty}} \leq \frac{C}{s^{\frac{3}{4}}},
$$
where $\epsilon = \epsilon (p) > 0$.  We simply inject these bounds to the a priori estimates given in  Lemma \ref{dynamic-K-feym} to obtain the bounds on $\big(\vartheta_m)_-$ and $\big(\vartheta_m\big)_e$ for $m = 2, 3, 4$. The estimate on $\vartheta_1$ directly follows from Lemma \ref{dynamic-K-feym} and the assumption $q(s) \in S_A(s)$. This ends the proof of Lemma \ref{control-prin-q-e-q-}.
\end{proof}
From the formula \eqref{Duhamel-q}, the estimates \eqref{conq-} and \eqref{conqe} simply follows from Lemma \ref{control-prin-q-e-q-} by addition. This concludes the proof of Proposition \ref{prop:dyn}.
\end{proof}

\subsection{Conclusion of Proposition \ref{prop:redu}.}
In this part, we  give the proof of Proposition \ref{prop:redu} which 
is a consequence of the dynamics of equation \eqref{equa-q} given in 
Proposition \ref{prop:dyn}. Indeed,  
the item $(i)$ of Proposition \ref{prop:redu} directly follows from the following result.
 \begin{proposition}[Control of $q(s)$ by $(q_0, q_1)(s)$ in $S_A(s)$]\label{control-q(s)}
There exists $A_7\geq 1$ such that $\forall A \geq A_7$, there exists $s_7(A)\geq 1$ such that for all $s_0 \geq s_7(A)$, we have the following properties: 
\begin{itemize}
\item[$a)$] $q(s_0) = \psi_{d_0,d_1,s_0}(y)$, where  $(d_0,d_1) \in \mathbb{D}_{A,s_0}$,
\item[$b)$] For all $s \in [s_0,s_1]$, $q(s) \in S_A(s)$.
\end{itemize}
Then for all $s \in [s_0,s_1]$, we have
\begin{eqnarray}
 \forall i,j \in \{1, \cdots, n\}, \quad |q_{2,i,j}(s)| &< & \frac{A^2 \ln^2 s}{ s^2}, \label{conq2} \\
\left\| \frac{q_-(y,s)}{1 + |y|^3}\right\|_{L^{\infty}} \leq   \frac{A}{2 s^{2}}, &\quad& \|q_e(s)\|_{L^{\infty}} \leq  \frac{A^2}{2 \sqrt s}, \label{conq-qe}
\end{eqnarray}
where $\mathbb{D}_{A,s_0}$ is introduced in Proposition \ref{pro-initial} and $\psi_{d_0, d_1}$ is defined as in \eqref{def-psi0}.
\end{proposition}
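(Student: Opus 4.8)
The plan is to convert the dynamical estimates of Proposition~\ref{prop:dyn} into a \emph{strict} improvement of the bounds defining $S_A(s)$ for the components $q_2$, $q_-$, $q_e$, leaving $q_0$ and $q_1$ untouched: these two carry the nonnegative eigenvalues $1$ and $\tfrac12$ of $\mathcal{L}$ and are precisely the modes kept for the topological argument. Throughout one takes $A\ge A_4$ and $s_0\ge s_4(A)$ so that Proposition~\ref{prop:dyn} applies on $[s_0,s_1]$. The estimate \eqref{conq2} on the (borderline) mode $q_2$ is the easy one: setting $Q(s)=s^2q_{2,i,j}(s)$, the ODE \eqref{odeq2} gives $|Q'(s)|\le C\ln s/s$, hence $|Q(s)|\le|Q(s_0)|+\tfrac{C}{2}\ln^2 s$; since $|Q(s_0)|=s_0^2|\psi_{2,i,j}|<A\ln^2 s_0\le A\ln^2 s$ by Proposition~\ref{pro-initial}, we obtain $|q_{2,i,j}(s)|\le(A+\tfrac C2)\ln^2 s/s^2<A^2\ln^2 s/s^2$ once $A$ exceeds an absolute constant. (Note $q_2$ does not reach the $A/s^2$ scale — it only stays strictly inside its $A^2\ln^2 s/s^2$ bound — which is all \eqref{conq2} requires.)

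For $q_-$ and $q_e$ I would work from the Duhamel formula \eqref{Duhamel-q}, i.e.\ from \eqref{conq-}--\eqref{conqe} — more precisely from the term-by-term bounds on $\vartheta_1,\dots,\vartheta_4$ of Lemma~\ref{control-prin-q-e-q-} combined with Lemma~\ref{dynamic-K-feym} — with the starting time $\sigma$ of the semigroup chosen according to a time-step $\rho^*=\rho^*(A)$, distinguishing two regimes. In the regime $s-s_0\le\rho^*$ one applies the estimates with $\sigma=s_0$; the point is that the initial data \eqref{def-psi0} has essentially only its $q_0,q_1$ components, the cut-off acting solely on $|y|\gtrsim\sqrt{s_0}$ where the Gaussian weight is exponentially small, so that $\psi_2,\psi_-=O(e^{-cs_0})$ — much smaller than the crude bounds of Proposition~\ref{pro-initial} — and $\psi_e\equiv0$. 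Consequently the memory term $\vartheta_1$ reaches $q_-$ and $q_e$ only through $\psi_0,\psi_1=O(A/s^2)$, with an amplification at most $Ce^{\rho^*}\mathrm{poly}(\rho^*)$, hence contributes $\ll A/s^2$ resp. $\ll A^2/\sqrt s$ provided $s_0$ is large; and $\vartheta_2+\vartheta_3+\vartheta_4$ only add $O(\rho^*/s^2)$ and $O(\rho^*/\sqrt s)$.

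In the regime $s-s_0>\rho^*$ one instead takes $\sigma=s-\rho^*\in(s_0,s)$ and uses only $q(\sigma)\in S_A(\sigma)$. Then $\vartheta_1$ contributes to $q_-$ at most $\tfrac{C}{s^2}\big(Ae^{-\rho^*/2}+A^2e^{-(\rho^*)^2}\big)$ plus a bleed of the $q_2$-mode of size $\lesssim e^{\rho^*}A^2\ln^2 s/s^{5/2}$, and to $q_e$ at most $\tfrac{C}{\sqrt s}\big(A^2e^{-\rho^*/p}+Ae^{\rho^*}\big)$, while again $\vartheta_2+\vartheta_3+\vartheta_4$ add $O(\rho^*/s^2)$, $O(\rho^*/\sqrt s)$. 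Choosing $\rho^*=c\ln A$ with a fixed $c\in(0,1)$ makes every factor $e^{-c\rho^*}$ kill the accompanying power of $A$ (so $e^{-(\rho^*)^2}A^2\to0$, $e^{-\rho^*/2}A\simeq A^{1-c/2}\ll A$, and so on), leaves the linear terms trivially $\ll A/s^2$, $A^2/\sqrt s$, and controls the $q_2$-bleed once $s_0\ge s_7(A)$ is taken polynomially large in $A$. Summing, $\|q_-(\cdot,s)/(1+|y|^3)\|_{L^\infty}\le A/(2s^2)$ and $\|q_e(s)\|_{L^\infty}\le A^2/(2\sqrt s)$ on $[s_0,s_1]$, which is \eqref{conq-qe}.

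The only delicate point, and the one I expect to require genuine care, is the calibration of the time-step $\rho^*=\rho^*(A)$: it must be large enough for $e^{-\rho^*/2}A$ and $e^{-(\rho^*)^2}A^2$ to become negligible, which forces $\rho^*\gtrsim\ln A$; yet the outer estimate \eqref{conqe} carries a term $\sim Ae^{s-\sigma}$ that \emph{grows} with the step, so $\rho^*$ cannot exceed roughly $(1-\delta)\ln A$. One has to check that this window is non-empty — it is, $\rho^*=\tfrac12\ln A$ does the job — and then fix $A_7$ and $s_7(A)$ (a sufficiently large polynomial in $A$, absorbing $s_4$, $s_5(\rho^*(A))$ and $s_6(A,\rho^*(A))$) so that each ``$\ll$'' above becomes a true inequality. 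Once \eqref{conq2} and \eqref{conq-qe} hold, item~$(i)$ of Proposition~\ref{prop:redu} follows immediately: a point of $\partial S_A(s_1)$ can only be one where the $q_0$- or $q_1$-bound is saturated, i.e.\ $(q_0,q_1)(s_1)\in\partial\hat S_A(s_1)$.
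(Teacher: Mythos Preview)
Your proposal is correct and follows essentially the same route as the paper: for \eqref{conq-qe} you spell out precisely the time-stepping argument with $\rho^*\sim\ln A$ that the paper invokes by reference to Merle--Zaag \cite{MZdm97}, and for \eqref{conq2} you use the same ODE \eqref{odeq2}, only packaged as a direct integration of $s^2q_{2,i,j}$ rather than the paper's touching-and-contradiction argument. Both treatments of $q_2$ are standard and yield the strict bound once $A^2>A+C/2$; your version is arguably cleaner, while the paper's contradiction argument has the minor advantage of not needing the sharp initial bound $|\psi_{2,i,j}|<A\ln^2 s_0/s_0^2$ from Proposition~\ref{pro-initial}.
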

\begin{proof} Since the proof of \eqref{conq-qe} is similar to the one written in \cite{MZdm97}, we only deal with the proof of \eqref{conq2} and refer to Proposition 3.7 in \cite{MZdm97} for the proof of \eqref{conq-qe}. We argue by contradiction to prove \eqref{conq2}. Let $i, j \in \{1, \cdots, n\}$ and assume that there is $s_* \in [s_0, s_1]$ such that 
$$\forall s \in [s_0, s_*), \quad |q_{2,i,j}(s)| < \frac{A^2 \ln^2(s)}{s^2} \quad \text{and} \quad |q_{2,i,j}(s_*)| = \frac{A^2 \ln^2(s_*)}{s_*^2}.$$
Assuming that $q_{2,i,j}(s_*) > 0$ (the negative case is similar), we have on the one hand
$$q'_{2,i,j}(s_*) \geq \frac{d}{ds}\left(\frac{A^2 \ln^2 s}{s^2}\right)_{s = s_*} = \frac{2A^2 \ln s_*}{s_*^3} - \frac{2A^2 \ln^2 s_*}{s_*^3}.$$
On the other hand, we have from \eqref{odeq2}, 
$$q'_{2,i,j}(s_*) \leq - \frac{2A^2 \ln^2 s_*}{s_*^3} + \frac{C \ln s_*}{s_*^3}.$$
The contradiction then follows if $2A^2 > C$. This concludes the proof of Proposition \ref{control-q(s)}.
\end{proof}

From Proposition \ref{control-q(s)}, we see that if $q(s) \in \partial S_A(s_1)$, 
the first two components $(q_0,q_1)(s_1)$ must be in $\partial \hat S_A(s_1)$, which is the conclusion of item $(i)$ of Proposition \ref{prop:redu}.

The proof of item $(ii)$ of Proposition \ref{prop:redu} follows from \eqref{odeq0q1}. Indeed, it is easy to see from \eqref{odeq0q1} that for all  $ i \in \{ 1,...,n\}$ and for each $ \varepsilon_0, \varepsilon_i  = \pm 1$, then if $q_0 (s_1) = \varepsilon_0 \dfrac{A}{s_1^2}$ and $q_{1,i} (s_1) = \varepsilon_i \dfrac{A}{s^2_1}$, it follows that the sign of $\dfrac{{dq_0 }}{{ds}}\left( {s_1 } \right)$ and  $\dfrac{{dq_{1,i} }}{{ds}}\left( {s_1 } \right)$  are opposite the sign of $\dfrac{d}{{ds}}\left({\dfrac{{\varepsilon_0 A}}{{s^2 }}}\right)\left( {s_1 } \right)$ and $\dfrac{d}{{ds}}\left( {\dfrac{{\varepsilon_i A}}{{s^2 }}} \right)\left( {s_1 } \right)$ respectively. Hence, $(q_0,q_1)(s)$ will actually leave $\hat S_{A} (s)$ at $s_1 \geq s_0$ for $s_0$ large enough. This concludes the proof of Proposition \ref{prop:redu}.

 \appendix
 \section{Some elementary lemmas.}
 
 \begin{lemma}\label{assymptotic-psi-T}
 For each $T > 0$, there exists only one positive solution of equation \eqref{euqq-ODE-psi}. Moreover, the solution $\psi$ satisfies the following asymptotic:
 \begin{equation}\label{inequ-psi-ep-+}
 \psi
(t) \sim \kappa_{\alpha}(T - t)^{-\frac{1}{p-1}} |\ln(T - t)|^{-\frac{\alpha}{p-1}}, \text{ as } t \to T,
 \end{equation}
 where $\kappa_{\alpha} = (p-1)^{-\frac{1}{p-1}} \left( \frac{p-1}{2} \right)^{\frac{\alpha}{p-1}}$.
 \end{lemma}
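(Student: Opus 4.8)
The plan is to study the autonomous equation \eqref{euqq-ODE-psi} by separation of variables. Set $G(\psi)=\psi^p\ln^\alpha(\psi^2+2)$; since $p>1$, $G\in C^1([0,+\infty))$ with $G(0)=0$ and $G>0$ on $(0,+\infty)$. Fix $v_0>0$ large enough that $\sigma\mapsto\sigma^p\ln^\alpha(\sigma^2+2)$ is positive and continuous on $[v_0,+\infty)$, and introduce
\[
\Phi(v)=\int_v^{+\infty}\frac{d\sigma}{\sigma^p\ln^\alpha(\sigma^2+2)},\qquad v\geq v_0 .
\]
This integral converges because $p>1$ (for $\alpha<0$ the factor behaves like $(\ln\sigma)^{-|\alpha|}$ at infinity, which does not affect convergence), and $\Phi$ is a strictly decreasing $C^1$ bijection from $[v_0,+\infty)$ onto $(0,\Phi(v_0)]$, with $\Phi(v)\to0$ as $v\to+\infty$.

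\textbf{Existence and uniqueness.} For $t$ in a left neighbourhood of $T$, define $\psi(t)=\Phi^{-1}(T-t)$. Differentiating $\Phi(\psi(t))=T-t$ gives $\Phi'(\psi(t))\psi'(t)=-1$, i.e. $\psi$ solves \eqref{euqq-ODE-psi}, and $\psi(t)\to+\infty$ as $t\to T$ since $\Phi^{-1}(\tau)\to+\infty$ as $\tau\to0^+$. Because $G$ is locally Lipschitz on $[0,+\infty)$, the Cauchy--Lipschitz theorem lets us extend $\psi$ backwards; along the way $\psi$ stays positive and decreasing (comparison with $\psi'\sim c\,\psi^p$ near $0$ shows it reaches $0$ only as $t\to-\infty$), so $\psi$ is a positive solution on all of $[0,T)$. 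For uniqueness, let $\widetilde\psi$ be any positive solution of \eqref{euqq-ODE-psi}; once $\widetilde\psi(t)\geq v_0$ the same computation yields $\frac{d}{dt}\Phi(\widetilde\psi(t))=-1$, hence $\Phi(\widetilde\psi(t))=c-t$ for some constant $c$, and letting $t\to T$ with $\widetilde\psi\to+\infty$, $\Phi\to0$ forces $c=T$; thus $\widetilde\psi=\Phi^{-1}(T-t)=\psi$ near $T$, and backward uniqueness for the Cauchy problem propagates the identity to $[0,T)$.

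\textbf{Asymptotics.} First I would expand $\Phi$ at infinity. Writing $\ln(\sigma^2+2)=2\ln\sigma+\ln(1+2\sigma^{-2})$ gives $\ln^\alpha(\sigma^2+2)=(2\ln\sigma)^\alpha\bigl(1+O(\sigma^{-2}(\ln\sigma)^{-1})\bigr)$, and an integration by parts ($u=(\ln\sigma)^{-\alpha}$, $dw=\sigma^{-p}d\sigma$) gives
\[
\int_v^{+\infty}\frac{d\sigma}{\sigma^p(\ln\sigma)^\alpha}=\frac{v^{1-p}(\ln v)^{-\alpha}}{p-1}+O\!\bigl(v^{1-p}(\ln v)^{-\alpha-1}\bigr),
\]
hence $\Phi(v)\sim\dfrac{v^{1-p}(\ln v)^{-\alpha}}{2^{\alpha}(p-1)}$ as $v\to+\infty$. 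Now put $\tau=T-t$, so $\Phi(\psi(t))=\tau$ reads $\psi^{p-1}(\ln\psi)^{\alpha}=\bigl(2^\alpha(p-1)\tau\bigr)^{-1}(1+o(1))$. Taking logarithms removes the slowly varying factor: since $\ln\psi\to+\infty$ and $\ln\ln\psi=o(\ln\psi)$, one obtains $\ln\psi=\frac{|\ln\tau|}{p-1}(1+o(1))$, so $(\ln\psi)^\alpha=(p-1)^{-\alpha}|\ln\tau|^{\alpha}(1+o(1))$. Substituting this back yields $\psi^{p-1}=(p-1)^{\alpha-1}2^{-\alpha}\,\tau^{-1}|\ln\tau|^{-\alpha}(1+o(1))$, and taking the $(p-1)$-th root together with the simplification $\bigl((p-1)^{\alpha-1}2^{-\alpha}\bigr)^{1/(p-1)}=(p-1)^{-1/(p-1)}\bigl(\tfrac{p-1}{2}\bigr)^{\alpha/(p-1)}=\kappa_\alpha$ gives exactly \eqref{inequ-psi-ep-+}.

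The convergence of $\Phi$, the differentiation identities, and the elementary integration by parts are routine; the only point requiring care is the inversion of $\Phi(\psi)=T-t$, where the nested logarithm $\ln\ln\psi$ must be controlled. Since it is of strictly lower order than $\ln\psi$, a single bootstrap — taking logarithms first to extract $\ln\psi\sim|\ln(T-t)|/(p-1)$, then substituting back — pins down both the power of $|\ln(T-t)|$ and the constant $\kappa_\alpha$.
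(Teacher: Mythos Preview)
Your proof is correct and follows essentially the same strategy as the paper: both separate variables to obtain $T-t=\int_{\psi(t)}^{\infty}\frac{du}{u^{p}\ln^{\alpha}(u^{2}+2)}$, extract the leading order $\ln\psi\sim\frac{|\ln(T-t)|}{p-1}$ by a first crude step, and then substitute back to pin down the constant $\kappa_{\alpha}$. The only cosmetic differences are that you obtain the asymptotic of $\Phi(v)$ directly via integration by parts (whereas the paper first sandwiches the integrand between $u^{-(p\pm\delta)}$) and you substitute back into the relation $\Phi(\psi)=T-t$ rather than into the ODE itself; neither changes the substance of the argument.
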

 \begin{proof} Consider the ODE
$$\psi' = \psi^p \ln^{\alpha}(\psi^2  +2), \quad \psi(0) > 0.$$
The uniqueness and local existence are derived by the Cauchy-Lipschitz property.  
Let $T_{max}, T_{min}$ be the maximum and minimum  time  of the existence of the positive solution, i.e.  $\psi(t)$ exists for all $t \in (T_{min}, T_{max})$. We now prove that $T_{max} < + \infty$ and $T_{min} = - \infty$. By contradiction, we suppose that  the solution exists on $[0, + \infty)$, we have 
 $$\lim_{t_1 \to +\infty} \int_0^{t_1} \frac{\psi'}{\psi^p\ln^{\alpha}(\psi^2  +2)} dt = \lim_{t_1 \to +\infty} \int_0^{t_1}dt = +\infty.$$  
Since $\int_0^{t_1} \frac{\psi'}{\psi^p\ln^{\alpha}(\psi^2  + 2)} dt$ is bounded, the contradiction then follows.  With  a similar  argument we can prove that $T_{min} = - \infty$.
 Let us now prove \eqref{inequ-psi-ep-+}. We deduce from \eqref{euqq-ODE-psi} that
$$T - t =  \int_{\psi(t)}^{+\infty}\frac{du}{ u^p \ln^{\alpha}(u^2  +2)}.$$
Thus, for all $\delta \in (0, p-1)$, there exist $t_\delta$ such that for all $t \in (t_\delta, T)$, we have
$$ \int_{\psi(t)}^{+\infty} \frac{du}{u^{p + \delta}} \leq T -t \leq  \int_{\psi(t)}^{+\infty} \frac{du}{u^{p  -  \delta}}.$$
This follows for all $t \in (t_\delta, T)$:
$$ (p-1 + \delta)^{-\frac{1}{p - 1 + \delta}} (T - t)^{-\frac{1}{ p - 1 + \delta}} \leq \psi(t) \leq (p-1 - \delta)^{-\frac{1}{p - 1 - \delta}} (T - t)^{-\frac{1}{ p - 1 - \delta}},$$
from which we have
$$\ln \psi (t)  \sim - \frac{1}{p-1} \ln(T  -t) \quad  \text{ as } \;\; t \to T,$$
and
$$\ln (\psi^2 + 2) \sim -\frac{2}{p-1} \ln(T-t) \quad \text{ as }\;\; t \to T.$$
Hence, we obtain
\begin{equation}\label{equi-ODE-psi}
\psi' = \psi^p \ln(\psi^2 +2) \sim \psi^p \left[ - \frac{2}{p-1} \ln(T - t)\right]^{\alpha} \quad \text{ as }\;\; t \to T,
\end{equation}
which yields
$$ \frac{\psi'}{\psi^p}  \sim   \left( \frac{2}{p-1}\right)^{\alpha} |\ln(T - t)|^{\alpha}\quad \text{ as } \;\; t \to T.$$
This implies 
$$\frac{1}{p-1} \psi^{1-p}  \sim \left( \frac{2}{p-1}\right)^{\alpha} \int^T_t |\ln(T - v)|^{\alpha} dv \sim \left( \frac{2}{p-1}\right)^{\alpha} (T - t) |\ln(T - t)|^{\alpha} \quad \text{ as }\; t \to T, $$
which concludes the proof of \eqref{inequ-psi-ep-+}.
\end{proof}

\begin{lemma}\label{lemma-inequality-integral_I-h}
For all $\alpha \in (0,1), \theta > 0$ and $0 < h < 1$, the integral
$$I (h)  = \int_h^1  (s - h)^{- \alpha} s^{-\theta} ds $$
satisfies:
\begin{itemize}
\item[$i)$] if $\alpha  + \theta > 1$, then 
$$ I(h) \leq \left( \frac{1}{1 - \alpha} + \frac{1}{ \alpha  +\theta  - 1} \right) h^{1 - \alpha  - \theta}.$$
\item[$ii)$] If $\alpha  + \theta   = 1$, then
$$I(h)  \leq  \frac{1}{ 1 - \alpha}  + |\ln h|.$$
\item[$iii)$] If $\alpha + \theta < 1$, then
$$I(h) \leq   \frac{1}{ 1 - \alpha - \theta}.$$
\end{itemize}
\end{lemma}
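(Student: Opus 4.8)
The plan is to split the interval $[h,1]$ into a piece near the singularity $s=h$ and a tail piece, estimating each by an elementary comparison. Concretely I would write
\[
I(h)=\int_{h}^{\min(2h,1)}(s-h)^{-\alpha}s^{-\theta}\,ds+\int_{\min(2h,1)}^{1}(s-h)^{-\alpha}s^{-\theta}\,ds=:I_1(h)+I_2(h),
\]
where $I_2(h)=0$ whenever $h\ge\frac12$. The guiding idea is that on the first piece the factor $s^{-\theta}$ is harmless, while on the second piece it is the factor $(s-h)^{-\alpha}$ that is harmless, so one only has to route each dangerous factor onto the right variable.

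For $I_1$, since $s\mapsto s^{-\theta}$ is decreasing I bound $s^{-\theta}\le h^{-\theta}$, and since $\alpha<1$ the remaining integral is explicit:
\[
I_1(h)\le h^{-\theta}\int_{h}^{\min(2h,1)}(s-h)^{-\alpha}\,ds=\frac{h^{-\theta}\,\big(\min(h,1-h)\big)^{1-\alpha}}{1-\alpha}\le\frac{h^{1-\alpha-\theta}}{1-\alpha},
\]
using $\min(h,1-h)\le h$; in the case $\alpha+\theta=1$ the last quantity is exactly $\frac{1}{1-\alpha}$. For $I_2$ (which matters only when $h<\frac12$), since $s\ge s-h$ and $\theta>0$ I bound $s^{-\theta}\le(s-h)^{-\theta}$, so the integrand is at most $(s-h)^{-\alpha-\theta}$; the substitution $t=s-h$ then gives $I_2(h)\le\int_{h}^{1-h}t^{-\alpha-\theta}\,dt$. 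When $\alpha+\theta>1$ this is at most $\int_{h}^{\infty}t^{-\alpha-\theta}\,dt=\frac{h^{1-\alpha-\theta}}{\alpha+\theta-1}$, and when $\alpha+\theta=1$ it equals $\ln\frac{1-h}{h}\le|\ln h|$. Adding $I_1$ and $I_2$ yields $(i)$ and $(ii)$. For $(iii)$, where $\alpha+\theta<1$, no splitting is needed at all: the bound $s^{-\theta}\le(s-h)^{-\theta}$ is valid on all of $(h,1]$, so $I(h)\le\int_{h}^{1}(s-h)^{-\alpha-\theta}\,ds=\frac{(1-h)^{1-\alpha-\theta}}{1-\alpha-\theta}\le\frac{1}{1-\alpha-\theta}$.

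This is a routine estimate, so there is no genuine obstacle; the only points that deserve care are to land the constants exactly as stated. First, on the tail one should compare $s^{-\theta}$ with $(s-h)^{-\theta}$ rather than compare $(s-h)^{-\alpha}$ with $2^{\alpha}s^{-\alpha}$, since the latter would introduce a spurious multiplicative factor. Second, one must not overlook the regime $h\in[\frac12,1)$, where the split point $2h$ lies outside $[h,1]$: there $I_2$ is empty and the single piece $I_1$ already gives the full bound because $\min(h,1-h)=1-h\le h$. I would organize the write-up as the decomposition above followed by the three cases $(i)$, $(ii)$, $(iii)$.
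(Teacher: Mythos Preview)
Your argument is correct and cleanly handles all three cases with the right constants. The paper itself does not give a proof of this lemma; it simply refers to Lemma~2.2 of Giga and Kohn \cite{GKcpam89}, whose proof is precisely the split at $2h$ that you carry out. So your approach coincides with the standard one, and your write-up is in fact more detailed than what the paper provides.
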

\begin{proof}
See Lemma 2.2 of Giga and Kohn \cite{GKcpam89}
\end{proof} 
\begin{lemma}[A version of Gronwall Lemma]\label{inequa-3-function-y-r-h}
If $y(t), r(t)$ and $q(t)$  are continuous functions defined  on $[t_0, t_1]$ such that 
$$ y(t) \leq  y_0  + \int_{t_0}^{t} y(s) r(s) ds  +  + \int_{t_0}^t h(s) ds, \forall t \in [t_0, t_1].$$
Then,
$$y(t) \leq \displaystyle e^{\displaystyle\int_{t_0}^t r(s)ds } \left[ y_0  + \int_{t_0}^t h(s) e^{ -\displaystyle \int_{t_0}^s r(\tau) d\tau } ds\right].$$
\end{lemma}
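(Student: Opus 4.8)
The plan is to convert the integral inequality into a linear first-order differential inequality and then integrate it with the usual integrating factor, i.e. to run the classical Gronwall argument. Throughout I will work under the assumption $r\ge 0$ on $[t_0,t_1]$, which is the setting in which this lemma is applied and which is exactly what is needed to pass from the integral inequality to a differential one; the function called $q$ in the statement is the one written $h$ in the inequalities, and I will write $h$.

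First I would introduce the auxiliary functions $R(t)=\int_{t_0}^{t} r(\tau)\,d\tau$, $H(t)=\int_{t_0}^{t} h(\tau)\,d\tau$ and $v(t)=\int_{t_0}^{t} y(s)r(s)\,ds$, so that the hypothesis reads $y(t)\le y_0+v(t)+H(t)$ for all $t\in[t_0,t_1]$, while $v$ is continuously differentiable with $v(t_0)=0$ and $v'(t)=y(t)r(t)$. Multiplying the hypothesis by $r(t)\ge 0$ gives $v'(t)\le r(t)v(t)+r(t)\big(y_0+H(t)\big)$, which rewrites as $\frac{d}{dt}\big(e^{-R(t)}v(t)\big)\le r(t)\big(y_0+H(t)\big)e^{-R(t)}$. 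Integrating this from $t_0$ to $t$ and using $v(t_0)=0$, $R(t_0)=0$ yields $v(t)\le e^{R(t)}\int_{t_0}^{t} r(s)\big(y_0+H(s)\big)e^{-R(s)}\,ds$, hence $y(t)\le y_0+H(t)+e^{R(t)}\int_{t_0}^{t} r(s)\big(y_0+H(s)\big)e^{-R(s)}\,ds$.

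The only remaining computation is to simplify that last integral by an integration by parts, using $r(s)e^{-R(s)}=-\frac{d}{ds}e^{-R(s)}$ and $H'(s)=h(s)$; evaluating the boundary terms at $t_0$, where $R$ and $H$ vanish, gives $\int_{t_0}^{t} r(s)\big(y_0+H(s)\big)e^{-R(s)}\,ds=y_0-\big(y_0+H(t)\big)e^{-R(t)}+\int_{t_0}^{t} h(s)e^{-R(s)}\,ds$. Substituting this back, the terms $y_0+H(t)$ cancel and one is left precisely with $y(t)\le e^{R(t)}\big[y_0+\int_{t_0}^{t} h(s)e^{-R(s)}\,ds\big]$, which is the claim. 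I do not expect a genuine obstacle here: the argument is entirely standard, and the only points requiring a little care are the sign hypothesis on $r$ (used to turn the integral inequality into the differential inequality) and the bookkeeping of the boundary terms in the integration by parts.
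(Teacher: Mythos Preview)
Your argument is correct and entirely standard; the only point worth noting is that the paper does not actually prove this lemma but simply refers to Lemma~2.3 of Giga--Kohn \cite{GKcpam89}, so your self-contained derivation (with the explicit hypothesis $r\ge 0$, which is indeed how the lemma is used in the paper) supplies what the paper omits.
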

 \begin{proof}
 See Lemma 2.3 of Giga and Kohn \cite{GKcpam89}.
 \end{proof}
\begin{lemma}\label{citia-not-blow-up} For each $T_2 < T, \delta > 0 $. There exists $\epsilon  = \epsilon (T,T_2, \delta, n, p) > 0$ such that for each $v(x,t)$ satisfying
 \begin{equation}\label{inequa-ODE-v-gene}
 \left| \partial_tv - \Delta v\right| \leq  C |v|^p \ln^{\alpha}(v^2 + 2), \quad \forall |x| \leq \delta, \;\; t \in (T_2, T), \delta > 0,
 \end{equation}
 and
\begin{equation}\label{the-condi-not-blow}
| v(x,t)| \leq \epsilon \psi(t), \quad \forall |x| \leq \delta, \quad t \in (T_2, T),
\end{equation}
where  $\psi (t)$ is  the unique positive solution of \eqref{euqq-ODE-psi}. Then, $v(x,t)$ does not blow up at $(0,T)$.
 \end{lemma}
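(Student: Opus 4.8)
The plan is to pass to the self‑similar variables centred at the candidate blow‑up point $(0,T)$, to read the pointwise bound \eqref{the-condi-not-blow} as a smallness of the rescaled solution in $L^\infty$, and to observe that this smallness makes the nonlinear source \emph{subcritical}: the rescaled equation is then driven by a linear operator whose natural decay rate is \emph{exactly} $1/\psi$, so that $v=\psi\cdot(\text{small})$ stays bounded near $(0,T)$. Since $|v|$ satisfies the same differential inequality \eqref{inequa-ODE-v-gene} (distributionally), we may take $v\ge 0$. Fix $\tau_0\in(T_2,T)$ to be chosen close to $T$ and, for $(x,t)\in B_\delta\times[\tau_0,T)$, set
\[
W(y,s)=\psi(t)^{-1}v(x,t),\qquad y=\frac{x}{\sqrt{T-t}},\qquad s=-\ln(T-t),
\]
so that $W$ is defined for $s\ge s_0:=-\ln(T-\tau_0)$ on the expanding ball $\{|y|\le\delta e^{s/2}\}$ and, by \eqref{the-condi-not-blow}, $\|W(s)\|_{L^\infty(|y|\le\delta e^{s/2})}\le\epsilon$. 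Repeating the computation that produces \eqref{equation-w}, $W$ solves
\[
\partial_s W=\Big(\Delta-\tfrac12\,y\cdot\nabla\Big)W-h(s)W+G(y,s),\qquad
|G|\le C\,h(s)\,|W|^p\,\frac{\ln^\alpha(\psi_1^2W^2+2)}{\ln^\alpha(\psi_1^2+2)},
\]
with $h,\psi_1$ as in \eqref{def-psi(s)}, \eqref{defini-psi-1-in-s}. The key structural facts are that $h(s)\to\frac1{p-1}>0$, that $\psi_1'(s)=h(s)\psi_1(s)$ (hence $e^{-\int_\sigma^s h}=\psi_1(\sigma)/\psi_1(s)$, i.e.\ the natural decay of the linearised flow is precisely $1/\psi_1(s)=1/\psi(T-e^{-s})$), and that, since $|W|\le\epsilon<1$, the ratio of logarithms is at most $C(1+s)^{|\alpha|}$, so $|G|\le C'(1+s)^{|\alpha|}|W|^p$ for $s$ large.

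First I would localise: with $\zeta(y,s)=\chi_0\big(2|y|e^{-s/2}/\delta\big)$, so $\zeta\equiv1$ for $|y|\le\tfrac\delta2 e^{s/2}$, the function $W_1=\zeta W$ solves the same equation with an extra source supported in the annulus $|y|\sim e^{s/2}$, where $|W|\le\epsilon$ and, by interior parabolic estimates for \eqref{inequa-ODE-v-gene}, $|\nabla_y W|$ is bounded; this extra source influences the solution near the origin by only a super‑exponentially small amount, exactly the type of kernel bound established in Lemma~\ref{dynamic-K-feym}. Then I would run a continuity/barrier argument on the ansatz $\|W(s)\|\lesssim M/\psi_1(s)$: feeding it into the Duhamel formula for $W_1$ (built from the explicit, $L^\infty$–contractive Ornstein--Uhlenbeck semigroup and the integrating factor $e^{-\int h}=\psi_1(\sigma)/\psi_1(\cdot)$) and using $|G|\le C'(1+s)^{|\alpha|}M^{p-1}\psi_1(s)^{-(p-1)}\,|W_1|$, which is $\ll h(s)\,M/\psi_1(s)$ because $\psi_1(s)\to\infty$, together with Gr\"onwall's lemma (Lemma~\ref{inequa-3-function-y-r-h}), one obtains $\|W(s)\|_{L^\infty(|y|\le R)}\le M_R/\psi_1(s)$ for every fixed $R$ and $s$ large, with $M_R$ depending only on $R$, on $\epsilon$, on $\tau_0$ and on the data. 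Here $\epsilon$ must be small in terms of the constant $C$ in \eqref{inequa-ODE-v-gene}, of $p$ and of $\alpha$, and $\tau_0$ close to $T$ depending also on $\delta,n,T,T_2$ — exactly the dependence asserted.

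Since $\psi_1(s)=\psi(t)$ for $t=T-e^{-s}$, undoing the change of variables turns the last bound into $|v(x,t)|\le M_R$ on the parabolic region $\{|x|\le R\sqrt{T-t}\}$ near $(0,T)$; re‑running the same argument with base point $x_0$ in place of $0$ for $|x_0|$ small covers a full neighbourhood of $(0,T)$, so $v$ is bounded there and does not blow up at $(0,T)$. The hard part will be to make the two mechanisms cooperate: obtaining decay of $W$ at \emph{exactly} the rate $1/\psi_1$ — a crude Gr\"onwall estimate only yields an exponential rate strictly below $\frac1{p-1}$, which after multiplication by $\psi\sim\kappa_\alpha(T-t)^{-1/(p-1)}|\ln(T-t)|^{-\alpha/(p-1)}$ would still blow up — while simultaneously controlling the cutoff terms on the \emph{expanding} ball. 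This calls for a mode‑by‑mode treatment of the linearised dynamics as in Lemma~\ref{dynamic-K-feym} and the decomposition \eqref{decomp-q2}, isolating the zeroth mode whose decay is dictated exactly by $h$ through \eqref{asym-pto-h-s}; the remaining estimates are routine.
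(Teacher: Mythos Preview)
Your approach differs substantially from the paper's, and the difference stems from a misconception that creates the ``hard part'' you flag at the end. The paper never tries to show that $W$ decays at \emph{exactly} the rate $1/\psi_1$ (equivalently, that $v$ stays bounded). Instead it runs the argument entirely in the physical variables, following Giga--Kohn \cite{GKcpam89}: with a spatial cutoff $\phi$ and $\omega=\phi v$, the Duhamel formula for the heat semigroup together with the bound
\[
|v|^{p-1}\ln^\alpha(v^2+2)\ \le\ C\epsilon^{p-1}\,\psi^{p-1}\ln^\alpha(\psi^2+2)\ \le\ C\epsilon^{p-1}(T-\tau)^{-1}
\]
(which uses only \eqref{the-condi-not-blow} and the ODE \eqref{euqq-ODE-psi}) gives, via Gr\"onwall (Lemma~\ref{inequa-3-function-y-r-h}),
\[
\|\omega(t)\|_{L^\infty}\ \le\ C\,(T-t)^{-C\epsilon^{p-1}}.
\]
Choosing $\epsilon$ so that $C\epsilon^{p-1}\le \tfrac{1}{2(p-1)}$ yields $|v(x,t)|\le C(T-t)^{-1/(2(p-1))}$ on $|x|\le\delta/2$. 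This bound \emph{does} diverge as $t\to T$, but it is a strictly subcritical rate, and a standard parabolic regularity bootstrap (Lemma~3.3 of \cite{GKiumj87}) then upgrades it to genuine boundedness, hence no blow-up.

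Your sentence ``a crude Gr\"onwall estimate only yields an exponential rate strictly below $\tfrac{1}{p-1}$, which after multiplication by $\psi$ would still blow up'' is therefore the wrong diagnosis: that crude Gr\"onwall \emph{is} the argument, and a diverging but subcritical bound on $v$ suffices. The mode-by-mode machinery you propose (Lemma~\ref{dynamic-K-feym}, the decomposition \eqref{decomp-q2}) is designed for the linearisation around the profile $\varphi$ with potential $V$, not for the operator $\mathcal L-h(s)$ you obtain here; adapting it to expanding balls with cutoff errors would be a substantial and unnecessary detour. If you wish to stay in similarity variables, the clean route is simply to translate the paper's Gr\"onwall back: it gives $|W(y,s)|\le C e^{-\gamma s}$ with $\gamma=\tfrac{1}{p-1}-C\epsilon^{p-1}>0$, i.e.\ $|v|\le C(T-t)^{-1/(p-1)+\gamma}$, and then invoke the same regularity bootstrap.
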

\begin{proof} Since the argument is almost the same as in \cite{GKcpam89} treated for the case $\alpha = 0$, we only sketch the main step for the sake of completeness. Let $\phi \in C^{\infty} (\mathbb{R}^n), \phi = 1 \text{ if } |x| \leq \frac{\delta}{2}, \phi = 0 \text{ if } |x| \geq \delta$, and consider $\omega  = \phi v$ satisfying
\begin{equation}\label{parome-lap=fv-g}
\partial_t \omega - \Delta \omega = f \phi + g,
\end{equation}
where
$$f = \partial_t v - \Delta v \quad  \text{ and } \quad g = v \Delta \phi -  2 \nabla . (v \nabla \phi).$$
By using the Duhamel's formula, we write
\begin{equation}\label{Duhamel-ome-lem-appen}
\omega(t) = e^{(t - T_2)\Delta} (\omega(T_2)) + \int_{T_2}^{t}  \left(e^{(t - \tau)\Delta} (\phi f) +  e^{(t - \tau)\Delta}(g) \right) d\tau, \forall t \in [T_2, T),
\end{equation}
where $e^{t\Delta}$ is the heat semigroup satisfying the following properties: for all $h \in L^\infty$,
$$\| e^{t \Delta} h \|_{L^{\infty}} \leq  \|  h \|_{L^{\infty}} \text{ and } \| e^{t \Delta} \nabla h \|_{L^{\infty}} \leq  C t^{- \frac{1}{2}}\|  h \|_{L^{\infty}}, \forall t >0.$$
The formula \eqref{Duhamel-ome-lem-appen} then yields 
\begin{align}
\| \omega(t) \|_{L^{\infty}} &\leq  C + C\int_{T_2}^t \|\omega(\tau)\|_{L^{\infty}} \||v|^{p-1} \ln^{\alpha}( v^2  +2)(\tau)\|_{L^{\infty}(|x| \leq \delta)}\nonumber\\
& \quad + C\int_{T_2}^t (t - \tau)^{-\frac{1}{2}}\|v(\tau)\|_{L^{\infty}(|x| \leq \delta)} d\tau,\label{inequa-gen-of-ome}
\end{align}
for some constant $C = C(n, p, \phi, T, T_2,  \delta) > 0$.

From \eqref{inequa-ODE-v-gene}, \eqref{the-condi-not-blow} and Lemma \eqref{assymptotic-psi-T}, we find that  for all $|x| \leq \delta$, and $\tau \in [T_2, T)$, 
$$|v(\tau)|^{p-1} \ln^{\alpha}( v^2(\tau)  +2) \leq C \psi^{p-1}(\tau) \ln^{\alpha}(\psi^2(\tau) + 2 ) \leq C (T - \tau)^{-1},$$
and
$$|v(\tau) | \leq C (T -\tau)^{-\frac{1}{p-1}} |\ln(T -\tau)|^{- \frac{\alpha}{p-1}}.$$
The estimate \eqref{inequa-gen-of-ome} becomes
\begin{align}
\| \omega(t) \|_{L^{\infty}} &\leq  C + C\epsilon^{p-1} \int_{T_2}^t  ( T-\tau)^{-1}\|\omega(\tau)\|_{L^{\infty}}  d\tau \nonumber \\
&\quad + C\epsilon\int_{T_2}^t (t - \tau)^{-\frac{1}{2}}(T -\tau)^{-\frac{1}{p-1}} |\ln(T -\tau)|^{-\frac{\alpha}{p-1}} d\tau. \label{inequa-gen-of-ome-rewri-1}
\end{align}
In particular, we now consider   $0 < \lambda  \ll  \frac{1}{2}$ fixed, then  we have:
$$ (T - \tau )^{- \frac{1}{p-1}} |\ln(T-\tau)|^{-\frac{\alpha}{p-1}}  \leq C(\alpha, \lambda) (T - \tau)^{- \left( \frac{1}{p-1} + \lambda \right)}, \forall \tau \in (T_2, T).$$
Hence, we rewrite \eqref{inequa-gen-of-ome-rewri-1} as follows
\begin{align}
\| \omega(t) \|_{L^{\infty}} &\leq  C + C \epsilon^{p-1} \int_{T_2}^t  ( T-\tau)^{-1}\|\omega(\tau)\|_{L^{\infty}}  d\tau \nonumber \\
&\quad + C\epsilon\int_{T_2}^t (t - \tau)^{-\frac{1}{2}}(T -\tau)^{-\left (\frac{1}{p-1}  + \lambda\right)}  d\tau, \label{inequa-gen-of-ome-rewri-2}
\end{align}
where  $C(n,p, \phi, \alpha, \epsilon, \lambda,p)$. Beside that, by changing variables  $s =  T - \tau, h = T-  t$ we have
\begin{equation}\label{changing-variables-to-I-h}
\int_{T_2}^t (t - \tau)^{-\frac{1}{2}}(T -\tau)^{- \theta(p, \lambda)}  d\tau = \int^{T - T_2}_h (s - h)^{-\frac{1}{2}} (s)^{  - \theta(p,\lambda)} ds,
\end{equation}
where $ \theta(p,\lambda) = \left (\frac{1}{p-1}  + \lambda\right)$.

\textbf{Case 1:} If $ \theta(p,\lambda) < \frac{1}{2}$, by using  $iii)$ of Lemma \ref{lemma-inequality-integral_I-h} we deduce from \eqref{inequa-gen-of-ome-rewri-2}, \eqref{changing-variables-to-I-h} that 
$$ \|\omega (t)\|_{L^{\infty}} \leq  C   + C \epsilon^{p-1} \int_{T_2} ^t  (T  - s)^{-1}  \|\omega (s)\|_{L^{\infty}}ds, $$
Therefore, by Lemma  \ref{inequa-3-function-y-r-h}, 
\begin{equation}\label{estimates-norm-ome-K-ep-p-1}
\|\omega (t)\|_{L^{\infty}}  \leq  C (T - t)^{ - C\epsilon^{p-1}},
\end{equation}
Choosing $\epsilon$ small enough such that $C \epsilon^{p-1} \leq \frac{1}{2 (p-1)}$. Then, we conclude from \eqref{estimates-norm-ome-K-ep-p-1} that 
\begin{equation}\label{estimates-q--leq-12-p-1-T-t}
|v (x, t)| \leq C (T - t) ^{- \frac{1}{2 (p-1)}}, \text{ for } |x| \leq \frac{1}{2}, t \leq T.
\end{equation}
By using parabolic regularity  theory and the same argument as in Lemma 3.3 of \cite{GKiumj87}, we can prove that \eqref{estimates-q--leq-12-p-1-T-t} actually prevents blowup.

\textbf{Case 2:} $\theta (\lambda, p) = \frac{1}{2}, $ it is similar to the first case, by using $ii)$ of Lemma \ref{lemma-inequality-integral_I-h},  \eqref{inequa-gen-of-ome-rewri-2} and  \eqref{changing-variables-to-I-h} we yield
$$ \|\omega (t)\|_{L^{\infty}} \leq  C (1 + |\ln (T -t)|)   + C \epsilon^{p-1} \int_{T_2} ^t  (T  - s)^{-1}  \|\omega (s)\|_{L^{\infty}}ds, $$
However, we derive from Lemma  \ref{inequa-3-function-y-r-h} that
\begin{equation}\label{estima-omega-the-lamb=1-2}
\|\omega (t)\|_{L^{\infty}} \leq C (T -t)^{-K\epsilon^{p-1}}, 
\end{equation}
where $C =C(n,p,\phi, T,T_2, \delta) $. We now take $\epsilon$ is small enough such that $C \epsilon^{p-1} \leq \frac{1}{2 (p-1)}$, which follows \eqref{estimates-q--leq-12-p-1-T-t}.

\textbf{Case 3:} $\theta (\lambda, p) > \frac{1}{2}, $ by using Lemmas \ref{lemma-inequality-integral_I-h},  \ref{inequa-3-function-y-r-h} and arguments similar to obtain 
$$|v(x,t)| \leq C (T -t)^{\frac{1}{2} - \theta(p,\lambda)}, \quad \forall |x| \leq \delta,\; t \in [T_2, T),$$
Repeating the step in finite steps would end up with \eqref{estimates-q--leq-12-p-1-T-t}. This concludes the proof of Lemma \ref{citia-not-blow-up}.
\end{proof}

The following lemma gives the asymptotic behavior of $h(s)$ ans $\psi_1(s) $ defined in \eqref{def-psi(s)} and \eqref{defini-psi-1-in-s}.

\begin{lemma}\label{function-h(s)}
Let  $h(s)$ and $\psi_1(s) $ be defined as in \eqref{def-psi(s)} and \eqref{defini-psi-1-in-s}  respectively.  Then we have 
\begin{itemize}
\item[$i)$] 
\begin{equation}\label{lnpsi(s)}
\frac{1}{ \ln ( \psi^2_1(s) + 2) }   = \frac{p-1}{2s}  + \frac{\alpha (p-1) \ln s}{ 2 s^2}  + O\left( \frac{1}{s^2} \right), \quad \text{ as } s \to +\infty. 
\end{equation}
\item[$ii)$] 
\begin{equation}\label{hs}
h(s) =  \frac{1}{ p-1} \left[1 - \frac{\alpha}{ s}  - \frac{\alpha^2 \ln s}{s^2}  \right]+ O\left(\frac{1}{s^2} \right), \quad \text{ as } s \to +\infty.
\end{equation}
\end{itemize}
\end{lemma}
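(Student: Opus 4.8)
The plan is to read off both expansions from the integral representation of $\psi$ produced by separating variables in the ODE \eqref{euqq-ODE-psi}, combined with the first–order asymptotics of $\psi$ already recorded in Lemma \ref{assymptotic-psi-T}. Recall from the proof of that lemma the identity $T-t=\int_{\psi(t)}^{\infty}\frac{du}{u^{p}\ln^{\alpha}(u^{2}+2)}$; evaluating it at $t=T-e^{-s}$ and using \eqref{defini-psi-1-in-s} turns it into
$$e^{-s}=G\big(\psi_{1}(s)\big),\qquad G(y):=\int_{y}^{\infty}\frac{du}{u^{p}\ln^{\alpha}(u^{2}+2)},$$
which is the backbone of the argument.

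\emph{Step 1 (expansion of the auxiliary integral).} I would compute $G(y)$ as $y\to+\infty$ by integrating by parts twice. Writing $u^{-p}=\frac{d}{du}\big(\frac{-u^{-(p-1)}}{p-1}\big)$ and noting $\frac{d}{du}\ln(u^{2}+2)=\frac{2u}{u^{2}+2}=\frac{2}{u}+O(u^{-3})$, a first integration by parts produces the leading term $\frac{1}{(p-1)y^{p-1}\ln^{\alpha}(y^{2}+2)}$ plus a remainder integral of the same shape with $\alpha$ replaced by $\alpha+1$; a second integration by parts on that remainder yields
$$G(y)=\frac{1}{(p-1)\,y^{p-1}\ln^{\alpha}(y^{2}+2)}\left(1-\frac{2\alpha}{(p-1)\ln(y^{2}+2)}+O\!\left(\frac{1}{\ln^{2}(y^{2}+2)}\right)\right).$$

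\emph{Step 2 (proof of $(i)$) and Step 3 (proof of $(ii)$).} From Lemma \ref{assymptotic-psi-T} one has $\psi_{1}(s)/\big(\kappa_{\alpha}e^{s/(p-1)}s^{-\alpha/(p-1)}\big)\to1$, hence, taking logarithms, $\ln\psi_{1}(s)=\frac{s}{p-1}-\frac{\alpha}{p-1}\ln s+O(1)$, and since $\psi_{1}(s)\to+\infty$ exponentially, $\ln(\psi_{1}^{2}(s)+2)=2\ln\psi_{1}(s)+O(\psi_{1}^{-2})=\frac{2s}{p-1}\big(1-\frac{\alpha\ln s}{s}+O(s^{-1})\big)$. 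Inverting and expanding $\frac{1}{1+x}$ with $x=O(\ln s/s)$ gives \eqref{lnpsi(s)}; the prefactor $\frac{p-1}{2s}$ is precisely what pushes the $x^{2}=O(\ln^{2}s/s^{2})$ contribution down to the announced $O(s^{-2})$. For $(ii)$, I would combine Steps 1 and 2: by \eqref{def-psi(s)} and $e^{-s}=G(\psi_{1}(s))$,
$$h(s)=G\big(\psi_{1}(s)\big)\,\psi_{1}^{p-1}(s)\,\ln^{\alpha}(\psi_{1}^{2}(s)+2)=\tfrac{1}{p-1}\Big(1-\tfrac{2\alpha}{(p-1)\ln(\psi_{1}^{2}(s)+2)}+O\big(\ln^{-2}(\psi_{1}^{2}(s)+2)\big)\Big),$$
and substituting \eqref{lnpsi(s)} together with $\ln^{-2}(\psi_{1}^{2}(s)+2)=O(s^{-2})$ produces $h(s)=\frac{1}{p-1}\big[1-\frac{\alpha}{s}-\frac{\alpha^{2}\ln s}{s^{2}}\big]+O(s^{-2})$, which is \eqref{hs}.

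The routine part consists of the two integrations by parts and the Taylor expansions. The one point demanding care --- and the main obstacle --- is Step 1: checking that the remainders coming from $\frac{2u}{u^{2}+2}-\frac{2}{u}$ and from truncating the iteration are uniformly of lower order for \emph{all} real $\alpha$, including $\alpha<0$ where the weights $\ln^{-\alpha}(u^{2}+2)$, $\ln^{-\alpha-1}(u^{2}+2)$ are increasing in $u$. This turns out to be harmless, since each successive ratio $\ln^{-\beta-1}(y^{2}+2)/\ln^{-\beta}(y^{2}+2)=\ln^{-1}(y^{2}+2)\to0$ independently of the sign of $\beta$, so the expansion of $G$ is genuinely a decreasing series in $\ln^{-1}(y^{2}+2)$.
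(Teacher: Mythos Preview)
Your proposal is correct and follows essentially the same route as the paper: both arguments integrate by parts in the representation $e^{-s}=\int_{\psi_1(s)}^{\infty}\frac{du}{u^{p}\ln^{\alpha}(u^{2}+2)}$ to obtain the expansion $h(s)=\tfrac{1}{p-1}-\tfrac{2\alpha}{(p-1)^{2}\ln(\psi_1^{2}+2)}+O(\ln^{-2}(\psi_1^{2}+2))$, then feed in the asymptotic $\ln\psi_1(s)=\tfrac{s}{p-1}-\tfrac{\alpha}{p-1}\ln s+O(1)$ to conclude. The only cosmetic difference is that you import this last asymptotic from Lemma~\ref{assymptotic-psi-T}, whereas the paper re-derives it by taking logarithms of the integration-by-parts identity itself; your remark on the sign of $\alpha$ in Step~1 is a welcome clarification the paper leaves implicit.
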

\begin{proof} $i)$ Consider $\psi(t)$ the unique positive solution of \eqref{euqq-ODE-psi}. We have 
\begin{equation}\label{T-t=}
T -t  = \int_{\psi(t)}^{+\infty} \frac{dx}{x^p \ln^{\alpha}(x^2  +2)}.
\end{equation}
An integration by parts yields
\begin{equation}\label{equa:2.10}
T -t  =   \displaystyle  \frac{1}{\psi^{p-1}(t) \ln^{\alpha}(  \psi^2(t)  +2)} \left[  \frac{1}{p-1} -  \frac{2 \alpha}{(p-1)^2\ln( \psi^2(t)  +2)}   + O\left( \frac{1}{(\ln^2( \psi^2(t)  +2))}\right) \right].
\end{equation}
Let us write $ \psi (t)  = \psi_1(s)$ where $s = -\log(T-t)$, then we have

\begin{equation}\label{equa:2.12}
\ln ( \psi_1 (s)) = \frac{s}{p-1} - \frac{\alpha}{(p-1)} \ln \left( \ln ( \psi_1( s)) \right)   +O\left( 1\right), \quad \text{ as } s \to + \infty,
\end{equation}
from which,  we deduce that 
\begin{equation}\label{ln s=s-lns-again}
\ln ( \psi_1(s)) = \frac{s}{p-1}  - \frac{\alpha \ln \left( s   \right) }{p-1}   +O(1), \quad \text{ as } s \to + \infty,
\end{equation}
 which is the conclusion $(i)$.

$ii)$   From \eqref{def-psi(s)} and \eqref{equa:2.10}, we have 
\begin{equation}\label{equa:2.11}
h(s)  = \frac{1}{p-1}  - \frac{2 \alpha}{ (p-1)^2 \ln ( \psi^2_1(s) + 2)}  + O \left( \frac{1}{ \ln^2( \psi^2_1(s)   +2)}\right),
\end{equation}
Using \eqref{lnpsi(s)} we conclude the proof of \eqref{hs} as well as Lemma \eqref{function-h(s)}.
\end{proof} 
 \begin{lemma}\label{asymptotic-N-1-N-2}
 Let $N$ be defined as in \eqref{def:N-2}, we have
 \begin{equation}\label{asymp-N-proof}
 N(\bar w, s) =  \frac{ p \bar w^2}{2} + O\left(\frac{|\bar w|\ln s }{s^2}\right) + O\left(\frac{|\bar w|^2}{s}\right) + O(|\bar w|^3) \quad  \text{ as } \quad (\bar w ,s)  \to (0 ,+\infty).
 \end{equation}
 \end{lemma}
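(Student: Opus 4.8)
The plan is to expand each of the three summands in the definition \eqref{def:N-2} of $N(\bar w,s)$ separately and then to collect terms, keeping track of everything of size $|\bar w|s^{-1}$, $|\bar w|(\ln s)s^{-2}$, $|\bar w|^2 s^{-1}$ and $|\bar w|^3$. The feature that makes the statement non-trivial is a cancellation among the contributions that are linear in $\bar w$ and of order $s^{-1}$; apart from that, everything is routine remainder bookkeeping, which I would carry out uniformly for $|\bar w|\le \tfrac12$ (say) and $s$ large.

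First I would use that $\bar w+1>0$ for $|\bar w|$ small, so $|\bar w+1|^{p-1}(\bar w+1)=(1+\bar w)^p=1+p\bar w+\tfrac{p(p-1)}{2}\bar w^2+O(|\bar w|^3)$. Next, for the logarithmic ratio I would write $(\bar w+1)^2=1+g$ with $g=2\bar w+\bar w^2$, so that $\psi_1^2(\bar w+1)^2+2=(\psi_1^2+2)+\psi_1^2 g$ and hence $\ln(\psi_1^2(\bar w+1)^2+2)=\ln(\psi_1^2+2)+\ln\!\big(1+g\,\psi_1^2(\psi_1^2+2)^{-1}\big)$. Since $\psi_1(s)=\psi(T-e^{-s})\to+\infty$ with $\psi_1^{-2}$ exponentially small in $s$ (by Lemma \ref{assymptotic-psi-T}), one has $\psi_1^2(\psi_1^2+2)^{-1}=1+O(e^{-cs})$ for some $c>0$, and the last logarithm equals $2\ln(1+\bar w)+O(|\bar w|e^{-cs})$. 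Dividing by $\ln(\psi_1^2+2)$, using $(1+x)^\alpha=1+\alpha x+O(x^2)$ together with the crude bound $\ln(\psi_1^2+2)^{-1}=O(s^{-1})$ and then the sharper expansion \eqref{lnpsi(s)} of $\ln(\psi_1^2+2)^{-1}$ and $\ln(1+\bar w)=\bar w+O(\bar w^2)$, I would reach
\[
\frac{\ln^\alpha(\psi_1^2(\bar w+1)^2+2)}{\ln^\alpha(\psi_1^2+2)}=1+\frac{\alpha(p-1)\bar w}{s}+O\!\Big(\frac{|\bar w|\ln s}{s^2}\Big)+O\!\Big(\frac{\bar w^2}{s}\Big).
\]

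Then I would substitute these expansions and the asymptotics \eqref{hs}, which I write as $h(s)=\tfrac1{p-1}-\tfrac{\alpha}{(p-1)s}+O(\tfrac{\ln s}{s^2})$, into \eqref{def:N-2}. Denoting by $P$ and $Q$ the two factors just expanded, the product $PQ$ equals $1+p\bar w+\tfrac{p(p-1)}{2}\bar w^2+\tfrac{\alpha(p-1)\bar w}{s}$ up to errors of the admissible size, so that $N=h(s)\big(PQ-1-\bar w\big)-\bar w$. Multiplying out, $h(s)(p-1)\bar w=\bar w-\tfrac{\alpha\bar w}{s}+O(\tfrac{|\bar w|\ln s}{s^2})$ cancels the outer $-\bar w$ up to $-\tfrac{\alpha\bar w}{s}$, which is killed exactly by $h(s)\cdot\tfrac{\alpha(p-1)\bar w}{s}=\tfrac{\alpha\bar w}{s}+O(\tfrac{|\bar w|}{s^2})$, while $h(s)\cdot\tfrac{p(p-1)}{2}\bar w^2=\tfrac p2\bar w^2+O(\tfrac{\bar w^2}{s})$; collecting what remains gives \eqref{asymp-N-proof}.

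The only genuine obstacle is this double cancellation at order $|\bar w|s^{-1}$: the term $\tfrac{\alpha(p-1)\bar w}{s}$ produced by the logarithmic ratio is precisely what is needed to annihilate the $-\tfrac{\alpha\bar w}{s}$ carried by $h(s)$, so that $N$ is genuinely quadratic in $\bar w$ at leading order. This is why the second-order expansions of both $h(s)$ and $\ln(\psi_1^2+2)^{-1}$ provided by Lemma \ref{function-h(s)}, and not merely their leading terms, are required; the remaining estimates are elementary.
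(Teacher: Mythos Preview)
Your proof is correct and follows essentially the same route as the paper: you expand $(1+\bar w)^p$, the logarithmic ratio, and $h(s)$ to the required order and identify the cancellation of the two $\alpha\bar w/s$ contributions, exactly as the paper does. The only cosmetic difference is that the paper splits $N=N_1+N_2$ (with $N_1$ carrying the $-\alpha\bar w/s$ and $N_2$ the $+\alpha\bar w/s$) before adding, whereas you multiply out $h(s)\big(PQ-1-\bar w\big)-\bar w$ directly; the underlying expansions and the key cancellation are identical.
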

\begin{proof} 
 From the definition \eqref{def:N-2} of $N$, let us write
 $$N(\bar w, s) = N_1(\bar w, s) + N_2(\bar w, s),$$
 where
 \begin{align*}
 N_1(\bar w, s) &= h(s) \left( |\bar w  +1|^{p-1} (\bar w  +1) -(\bar w  +1) \right) - \bar w,\\
N_2(\bar w, s) &=   \displaystyle  h(s) |\bar w   +1|^{p-1}(\bar w  +1) \left( \frac{\ln^{\alpha}(\psi_1^2 (\bar w  +1)^2 +2)}{ \ln^{\alpha}( \psi_1^2 +2)} -1 \right).
 \end{align*}
From \eqref{hs} and a Taylor expansion,  we find that 
\begin{equation*}\label{result-asym-N_1}
 N_1(\bar w, s) = \frac{ p \bar w^2}{2}- \frac{\alpha \bar w}{s} + O\left(\frac{|\bar w| \ln s }{s^2}\right) + O\left( \frac{|\bar w|^2}{s}\right) + O(|\bar w|^3) \quad \text{ as } (\bar w ,s)  \to (0 ,+\infty).
 \end{equation*}
 We now claim the following 
 \begin{equation}\label{result-asym-N_2}
 N_2(\bar w, s) = \frac{\alpha \bar w}{s}  + O\left(\frac{|\bar w| \ln s }{s^2}\right) + O(\frac{|\bar w|^2}{s}) \quad \text{ as } (\bar w ,s)  \to (0 ,+\infty),
 \end{equation}
 then, the proof of \eqref{asymp-N-proof} simply follows by addition. 
 
Let us now give the proof of \eqref{result-asym-N_2} to complete the proof of Lemma \ref{asymptotic-N-1-N-2} .  We set
 $$ f(\bar w)  = \ln^{\alpha} ( \psi_1^2 (\bar w +1)^2  + 2), \quad | \bar w|  \leq \frac{1}{2}.$$
 We apply  Taylor expansion to  $f(\bar w)$ at $\bar w = 0$ to find that 
 \begin{equation*}\label{use-expansion-tl-f-z}
 f(\bar w) = \ln^{\alpha}(\psi_1^2  + 2) + 2 \alpha \ln^{\alpha -1}(\psi_1^2  +2) \frac{\psi_1^2 }{ \psi_1^2  +2}\bar w + \frac{f''(\theta)}{2} (\bar w)^2,
 \end{equation*}
where $\theta$ is between $0$ and $\bar w$, and   
\begin{align*}
f''( \theta) & = \alpha (\alpha -1) \ln^{\alpha -2} ( \psi_1^2 (\theta  +1)^2 +2 ) \left(\frac{2 (\theta +1) \psi_1^2}{\psi_1^2 (\theta +1)^2  +2} \right)^2 \nonumber\\
& \quad  +  \alpha \ln^{\alpha -1}(\psi_1^2(\theta +1)^2  +2)\frac{( 4 \psi_1 - 2\psi_1^4 (\theta +1)^2)}{(\psi_1^2 (\theta +1)^2  +2)^2}\label{diffirent-f-ordre-2}. 
\end{align*}
 Since $|\theta| \leq \frac 12$, one can show that
 $$|f''(\theta)| \leq  C \ln^{\alpha - 1}(\psi_1^2 + 2), \quad \forall |\theta| \leq \frac 12.$$
Thus, we have 
$$ f(\bar w) = \ln^{\alpha}(\psi_1^2  +2 )  + 2 \alpha \ln^{\alpha -1}(\psi_1^2  +2)  \bar w  + O \left(|\bar w|^2 \ln^{\alpha -1}(\psi_1^2  +2)\right) + O\left(\frac{|\bar w| \ln^{\alpha -1}(\psi_1^2  + 2)}{\psi_1^2 }\right),$$
as $s \to +\infty$.  This  yields
\begin{equation*}\label{expand-al-lnbarw-ln}
\frac{\ln^{\alpha}(\psi_1^2(\bar w  +1)^2  + 2)}{ \ln^{\alpha}(\psi_1^2 +2)} = 1 + \frac{ 2 \alpha \bar w  }{\ln(\psi_1^2  +2)}  + O\left(\frac{|\bar w|^2}{ \ln(\psi_1^2   +2)}\right)  +O\left(\frac{|\bar w|}{\ln(\psi_1^2 + 2)\psi_1^2 }\right),
\end{equation*}
as $(\bar w, s)  \to (0,+ \infty)$, from which and  \eqref{lnpsi(s)}  we derive 
\begin{equation}\label{expan-al-ln-barw-in-s}
\frac{\ln^{\alpha}(\psi_1^2(\bar w  +1)^2  + 2)}{ \ln^{\alpha}(\psi_1^2(s)  +2)}  - 1 =    \frac{ \alpha (p-1) \bar w }{s} + O\left(\frac{ \ln s |\bar w|}{s^2}\right)+ O\left(\frac{|\bar w|^2}{s}\right).
\end{equation}
From the definition of $N_2$, \eqref{hs}, \eqref{expan-al-ln-barw-in-s} and the fact that  
\begin{equation*}\label{tay-lor-expan-barw-p}
|\bar w  +1|^{p-1} (\bar w  +1) = 1 +  p \bar w + O(|\bar w|^2)  \quad \text{ as } \bar w \to 0,
\end{equation*}
we conclude the proof of \eqref{result-asym-N_2}  as well as Lemma \ref{asymptotic-N-1-N-2}.
\end{proof} 
\begin{lemma}\label{asymptotic-zp-h-s-ln-al}
For all $|z| \leq  K_1$, then there exists $C(K_1)$ such that  $\forall s \geq 1$ we have
\begin{equation}\label{decom-func-h-w-p-lnal}
\left| h(s) |z|^{p-1}z \frac{\ln^{\alpha}(\psi_1^2 z^2 +2)}{\ln^{\alpha}(\psi_1^2 +2)} - \frac{|z|^{p-1} z}{p-1} \right| \leq \frac{ C(K_1)}{s},
\end{equation} 
where $h(s)$ satisfies the asymptotic  \eqref{asym-pto-h-s}.
\end{lemma}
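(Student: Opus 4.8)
The plan is to peel off the factor $|z|^{p-1}z$, reduce the statement to two uniform bounds on the logarithmic ratio
$$r(z,s):=\frac{\ln^{\alpha}(\psi_1^2 z^2+2)}{\ln^{\alpha}(\psi_1^2+2)}>0,$$
and then dispatch those bounds by a dyadic split in $z$. First I would write
$$h(s)|z|^{p-1}z\,r(z,s)-\frac{|z|^{p-1}z}{p-1}=|z|^{p-1}z\left[\Bigl(h(s)-\tfrac1{p-1}\Bigr)r(z,s)+\tfrac1{p-1}\bigl(r(z,s)-1\bigr)\right],$$
and invoke part $ii)$ of Lemma \ref{function-h(s)}, which gives $\bigl|h(s)-\tfrac1{p-1}\bigr|\le C/s$ for all $s\ge 1$ (for $s$ in a bounded initial range this follows from continuity, enlarging $C$). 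Since $|z|^{p-1}|z|\le K_1^{p}$, the whole estimate then reduces to the two uniform bounds
$$|z|^{p-1}|z|\,\bigl|r(z,s)-1\bigr|\le\frac{C(K_1)}{s}\qquad\text{and}\qquad |z|^{p-1}|z|\,r(z,s)\le C(K_1),\qquad |z|\le K_1,\ s\ge1,$$
the second being an immediate consequence of the first.

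To prove the first bound I would split according to whether $|z|\le\psi_1(s)^{-1/2}$ or $\psi_1(s)^{-1/2}<|z|\le K_1$, using that $\psi_1(s)\to+\infty$ and $\ln\psi_1(s)\sim s/(p-1)$ as $s\to+\infty$ (from Lemma \ref{assymptotic-psi-T}, or from $i)$ of Lemma \ref{function-h(s)}). In the first region, $|z|^{p-1}|z|\le\psi_1^{-p/2}$ decays faster than any power of $s$, whereas $r(z,s)\le\bigl(\ln(\psi_1^2+2)/\ln2\bigr)^{|\alpha|}$ is at most polynomial in $s$ (since $\ln2\le\ln(\psi_1^2 z^2+2)\le\ln(\psi_1^2+2)$ for $s$ large); hence $|z|^{p-1}|z|\,r$ and $|z|^{p-1}|z|\,|r-1|$ are super-exponentially small, well below $C(K_1)/s$. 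In the second region, $\psi_1^2 z^2>\psi_1>1$ for $s$ large, so a first-order expansion of the logarithms gives $\ln(\psi_1^2 z^2+2)=2\ln\psi_1+2\ln|z|+O(\psi_1^{-1})$ and $\ln(\psi_1^2+2)=2\ln\psi_1+O(\psi_1^{-2})$; since $|\ln|z||\le\tfrac12\ln\psi_1+C(K_1)$ on this range, the ratio $\ln(\psi_1^2 z^2+2)/\ln(\psi_1^2+2)$ stays in a fixed compact subinterval of $(0,+\infty)$ and differs from $1$ by at most $C(K_1)(1+|\ln|z||)/s$. Applying the Lipschitz bound for $t\mapsto t^{\alpha}$ on that interval, multiplying by $|z|^{p-1}|z|$, and using the elementary fact that $|z|^{p-1}|z|\,(1+|\ln|z||)$ is bounded on $(0,K_1]$, yields $|z|^{p-1}|z|\,|r(z,s)-1|\le C(K_1)/s$. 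Finally, the bounded range of $s$ left outside this asymptotic regime is handled by compactness: on $[-K_1,K_1]\times[1,s_\ast]$ the expression in question is continuous (all logarithmic arguments being $\ge2$), hence bounded, and $1/s\ge1/s_\ast$ converts that bound into the desired one.

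The step I expect to be the main obstacle is controlling $r(z,s)$ near $z=0$: when $\alpha<0$ it grows like a positive power of $s$ there, so one cannot simply bound $r$ and pull it outside the estimate. The purpose of the threshold $|z|\sim\psi_1^{-1/2}$ is precisely to trade this growth against the prefactor $|z|^{p-1}|z|$, which is super-exponentially small in that regime, while in the complementary regime $|\ln|z||$ is dominated by a small fraction of $\ln\psi_1$, so the expansion of $\ln(\psi_1^2 z^2+2)$ about $\ln(\psi_1^2+2)$ is uniformly valid. Once this split is in place, the remaining computations are routine Taylor expansions.
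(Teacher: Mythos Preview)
Your proof is correct. Both you and the paper reduce to controlling $|z|^p\,|r(z,s)-1|$ and split at the same threshold $|z|\sim\psi_1^{-1/2}$, but the executions differ. The paper uses the decomposition $h(s)r-\tfrac1{p-1}=(h(s)-\tfrac1{p-1})+h(s)(r-1)$ (so the first term carries no factor of $r$), expresses $r-1$ as $\ln^{-\alpha}(\psi_1^2+2)\int_1^{|z|}f'(v)\,dv$ with $f(v)=\ln^\alpha(\psi_1^2v^2+2)$, and then bounds the integral by cases on the sign of $\alpha-1$ and on whether $\psi_1 z^2\gtrless 1$. Your decomposition instead puts the factor $r$ on $h(s)-\tfrac1{p-1}$, which forces you to also control $|z|^p r$; you recover that cheaply from the bound on $|z|^p|r-1|$. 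More substantially, you bypass the integral representation altogether by directly expanding $\ln(\psi_1^2 z^2+2)$ and $\ln(\psi_1^2+2)$ about $2\ln\psi_1$ and applying a Lipschitz bound for $t\mapsto t^\alpha$ on a fixed compact subinterval of $(0,\infty)$; this is a bit more transparent and avoids the case split on $\alpha$. Both arguments hinge on the same observation you flag at the end: for $\alpha<0$ the ratio $r$ can grow polynomially in $s$ near $z=0$, and the threshold $|z|\sim\psi_1^{-1/2}$ is precisely what trades that growth against the super-exponential decay of $|z|^p$.
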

\begin{proof}
We consider $f(z) = \ln^{\alpha}(\psi_1^2z^2 + 2) \forall z \in \mathbb{R}$, then we write
$$ \ln^{\alpha}(\psi_1^2 z^2 +2) = \ln^{\alpha}(\psi_1^2+ 2) + \int_{1}^{|z|}f'(v)dv. $$
Recall from \eqref{asym-pto-h-s} that $h(s) = \frac 1{p-1} + O(\frac 1s)$, we have
\begin{equation}\label{equali-multi-w-p-1-w}
\left| h(s)|z|^{p-1}z\frac{\ln^{\alpha}(\psi_1^2 z^2  + 2)}{\ln^{\alpha}(\psi_1^2 +2)}  - \frac{|z|^{p-1}z}{p-1} \right|  \leq  \frac{C|z|^{p}}{\ln^{\alpha}(\psi_1^2  +2)}\int_{1}^{|z|}|f'(v)|dv   + \frac{ C |z|^p}{s},
\end{equation}
From $i)$ of Lemma \ref{function-h(s)} we have $\frac{1}{\ln(\psi_1^2 + 2)} \leq \frac{C}{s}$, it is  sufficient to show that   
$$A(z):= \frac{|z|^{p}}{\ln^{\alpha -1}(\psi_1^2 +2)} \int_{1}^{|z|} |f'(v)|  dv \leq C(K_1), \quad  \forall |z| \leq K_1,$$
where 
$$f'(v) = \alpha \ln^{\alpha -1 }(\psi_1^2 v^2 +2) \frac{  2 v \psi_1^2}{\psi_1^2 v^2 + 2}.$$
For  $ 1 \leq |z| \leq K_1$, it is trivial to see that $|A(z)| \leq C(K_1).$
For   $|z| <  1$, we consider two cases:

- Case 1: $\alpha - 1 \geq 0$, then
$$A(z) \leq  2|\alpha| |z|^p \int_{|z|}^1 \frac{1}{v} dv \leq C(K_1).$$

- Case 2: $\alpha  -1 < 0$, then
	$$ A(z) \leq  2 |\alpha| |z|^p \frac{\ln^{\alpha -1} (\psi_1^2 z^2  +2)}{\ln^{\alpha -1} (\psi_1  +2 )} \int_{|z|}^1 \frac{1}{v} dv.$$
	+ if  $\psi_1 z^2 \geq 1$ then
	$$A(z) \leq 2 |\alpha|  \frac{ \ln^{1  -\alpha}(\psi_1^2  + 2)}{ \ln^{1 - \alpha} (\psi_1  + 2)} |z|^p \int_{|z|}^1 \frac{1}{v} dv  \leq C(K_1).$$

+ if $\psi_1 z^2 \leq 1$ then $|z| \leq v  \leq \psi_1^{-\frac{1}{2}}$ we deduce that
$$ |A(z)| \leq 2 \vert \alpha \vert \psi_1^{\frac{1-p}{2}} \frac{\ln^{1 - \alpha}(\psi_1^2 +2)}{ \ln^{1  -\alpha}(2)}  |z| \int_{|z|}^1 \leq C(K_1).$$
This concludes  the proof of  Lemma \ref{asymptotic-zp-h-s-ln-al}.
\end{proof}
\begin{lemma}[Control of the nonlinear term D in $S_A(s)$]\label{inside-D}
For all $A \geq 1$, there exists $\sigma_3(A) \geq 1$  such that for all $ s \geq \sigma_{3}(A), q(s) \in S_A(s)$ implies
\begin{equation}\label{D1-intside} 
\forall |y| \leq 2 K \sqrt s,  \quad \left| D(q,s) \right| \leq C(K) \frac{ \ln s (1  +|y|)^4}{s^3}, 
 \end{equation} 
and
 \begin{equation}\label{bound-D}
\|D(q,s)\|_{L^{\infty}(\mathbb{R}^n)}  \leq \frac{C}{s}.
\end{equation}
\end{lemma}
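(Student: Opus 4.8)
The plan is to exploit a cancellation inside $D$ at order $1/s$. Writing $w=q+\varphi$ and applying Taylor's formula with integral remainder to $f(z)=\ln^{\alpha}(\psi_1^2z^2+2)$ at $z=1$, the definitions \eqref{def:D}--\eqref{def:L-first} rearrange as
\[
D(q,s)=\Big(h(s)-\tfrac1{p-1}\Big)\,w\,\big(|w|^{p-1}-1\big)+h(s)\,|w|^{p-1}w\,\Big(\tfrac{\ln^{\alpha}(\psi_1^2w^2+2)}{\ln^{\alpha}(\psi_1^2+2)}-1\Big)=:D_1+D_2 .
\]
By \eqref{hs}, $h(s)-\tfrac1{p-1}=-\tfrac{\alpha}{(p-1)s}+O\!\big(\tfrac{\ln s}{s^2}\big)$, while $\tfrac{\ln^{\alpha}(\psi_1^2w^2+2)}{\ln^{\alpha}(\psi_1^2+2)}-1$ equals $\tfrac{f'(1)}{f(1)}(w-1)$ plus an integral remainder, with $\tfrac{f'(1)}{f(1)}=\tfrac{2\alpha\psi_1^2}{(\psi_1^2+2)\ln(\psi_1^2+2)}=\tfrac{\alpha(p-1)}{s}+O\!\big(\tfrac{\ln s}{s^2}\big)$, combining \eqref{lnpsi(s)} with $\tfrac{\psi_1^2}{\psi_1^2+2}=1+O(e^{-cs})$ (valid since $\ln\psi_1\sim\tfrac{s}{p-1}$). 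Hence $D_1$ and $D_2$ each produce a term $\mp\tfrac{\alpha(w-1)}{s}$, and these \emph{cancel}; the upshot is that $D$ should be of size $\tfrac{(w-1)^2}{s}+\tfrac{\ln s\,|w-1|}{s^2}$, and the whole proof consists in making this precise.

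To prove \eqref{D1-intside} I would first localize. For $|y|\le 2K\sqrt s$ and $s\ge\sigma_3(A)$ large, $\varphi(y,s)\ge c(K)>0$ and $|q(y,s)|\le CA/\sqrt s$ by \eqref{norm-q-inside}, so $w$ stays in a fixed compact subinterval $I_K\subset(0,\infty)$; moreover $|\varphi-1|\le C(K)\tfrac{1+|y|^2}{s}$ (Taylor expansion of the profile around $y=0$, where it equals $1$ with vanishing gradient) and, by \eqref{remark-on-q-b} together with $1+|y|^3\le C(K)\sqrt s\,(1+|y|^2)$ on this region, $|q|\le C(K)\tfrac{1+|y|^2}{s}$; thus $a:=w-1$ satisfies $|a|\le C(K)\tfrac{1+|y|^2}{s}$. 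Since $w\in I_K$, Taylor's formula gives $|w|^{p-1}-1=(p-1)a+O(a^2)$ and $|w|^{p-1}w=1+pa+O(a^2)$, and $|f''(u)|\le C(K)\ln^{\alpha-1}(\psi_1^2+2)$ for $u$ between $1$ and $w$ (as in the proof of Lemma \ref{asymptotic-N-1-N-2}), so the integral remainder above is $O\!\big(\tfrac{a^2}{s}\big)$. Inserting the asymptotics of $h(s)$ and $\tfrac{f'(1)}{f(1)}$ then yields $D_1=-\tfrac{\alpha a}{s}+O\!\big(\tfrac{a^2}{s}\big)+O\!\big(\tfrac{\ln s\,|a|}{s^2}\big)$ and $D_2=\tfrac{\alpha a}{s}+O\!\big(\tfrac{a^2}{s}\big)+O\!\big(\tfrac{\ln s\,|a|}{s^2}\big)$, hence $D=O\!\big(\tfrac{a^2}{s}\big)+O\!\big(\tfrac{\ln s\,|a|}{s^2}\big)$; substituting $|a|\le C(K)\tfrac{1+|y|^2}{s}$ and bounding $1+|y|^2\le(1+|y|)^2$ gives \eqref{D1-intside}.

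For the global bound \eqref{bound-D} the cancellation is not needed. I would rewrite $D(q,s)=\big[h(s)|w|^{p-1}w\,\tfrac{\ln^{\alpha}(\psi_1^2w^2+2)}{\ln^{\alpha}(\psi_1^2+2)}-\tfrac{|w|^{p-1}w}{p-1}\big]-\big(h(s)-\tfrac1{p-1}\big)w$. Since $\|\varphi(s)\|_{L^\infty}\le 1+\tfrac n{2ps}$ and $\|q(s)\|_{L^\infty}\le CA^2/\sqrt s$ from the definition of $S_A(s)$, we have $\|w(s)\|_{L^\infty}\le K_1$ for an absolute constant $K_1$ once $s$ is large; Lemma \ref{asymptotic-zp-h-s-ln-al} applied pointwise with $z=w(y,s)$ bounds the first bracket by $C(K_1)/s$, and $|(h(s)-\tfrac1{p-1})w|\le\tfrac Cs\|w\|_{L^\infty}\le\tfrac Cs$ by \eqref{hs}. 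Adding these proves \eqref{bound-D}.

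The main difficulty — and essentially the only nontrivial point — is the $O(1/s)$ cancellation $\mp\tfrac{\alpha a}{s}$ between $D_1$ and $D_2$: without it one only obtains $|D|\lesssim\tfrac{1+|y|^2}{s^2}$ on the inner region, which is too weak by a factor $\sim s/\ln s$ near $y=0$ and would prevent closing the improved bounds on $q_2$ and $q_-$ in Propositions \ref{prop:dyn} and \ref{control-q(s)}. Capturing it forces one to use the second-order asymptotics of $h(s)$ and of $1/\ln(\psi_1^2+2)$ from Lemma \ref{function-h(s)}; all the remaining estimates are routine Taylor expansions and bookkeeping of remainders.
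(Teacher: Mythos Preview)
Your proof is correct and follows essentially the same approach as the paper: the same decomposition $D=D_1+D_2$, the same order-$1/s$ cancellation between the two pieces (you write it as $\mp\tfrac{\alpha a}{s}$ with $a=w-1$, the paper writes out the equivalent expression $\tfrac{\alpha(|y|^2-2n)}{4ps^2}-\tfrac{\alpha}{s}q$ using $\varphi-1=-\tfrac{|y|^2-2n}{4ps}+O(\tfrac{1+|y|^4}{s^2})$), and the same use of Lemma~\ref{asymptotic-zp-h-s-ln-al} for the global $L^\infty$ bound. Your packaging in terms of $a$ is slightly more compact, but the argument is identical in substance.
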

\begin{proof} From the definition \eqref{def:D}  of $D$, let us decompose
$$D(q, s)  = D_1 (q ,s)  +D_2(q,s),$$
where
$$
D_1(q,s) = \left( h(s)  - \frac{1}{p-1} \right) \left( |q + \varphi|^{p-1}(q  +\varphi)  -(q + \varphi) \right), 
$$
$$
D_2(q ,s) = h(s)|q + \varphi|^{p-1}(q + \varphi) L(q + \varphi,s),$$
and   $h(s)$  admits the asymptotic behavior  \eqref{hs},    $L$ is defined  in \eqref{def:L-first}. The proof of \eqref{D1-intside} will follow once the following is proved: for all $\vert y \vert \leq 2K \sqrt s $
\begin{equation}\label{expan-D-1-h-2}
\left|  D_1 - \left( \frac{\alpha (\vert y\vert^2 - 2n)}{4ps^2}  - \frac{\alpha }{s}q  \right)\right| \leq C \frac{(1 + |y|^4)\ln s}{s^3}, 
\end{equation}
and
\begin{equation}\label{expan-D-2-h-2}
\left| D_2  + \left( \frac{\alpha (\vert y\vert^2 - 2n)}{4ps^2}  - \frac{\alpha }{s} q \right) \right| \leq C \frac{(1  +|y|^4)\ln s}{s^3}.
\end{equation}
Let us give a proof of \eqref{expan-D-1-h-2}. From the definition of $S_A(s)$, we note that if $q(s) \in S_A(s)$, then
\begin{align}
\forall y \in \mathbb{R}^n, |q(y,s)| &\leq \frac{ C A^2\ln^2 s(1 + |y|^3)}{s^2}, \label{ineq-q_poli-S-A}\\
\|q(s)\|_{L^{\infty}(\mathbb{R}^n)} &\leq  \frac{C A^2}{\sqrt s}. \label{ineq-q-all-space}
\end{align}
From the definition \eqref{def-varphi} of $\varphi$ and \eqref{ineq-q-all-space}, we see that  for all $|y| \leq 2K\sqrt s$, there exists a positive constant C(K) such that 
\begin{equation}\label{insi-q+varphi-positive}
 0 < \frac{1}{C(K)} \leq  (q + \varphi )(y,s) \leq  C(K).
\end{equation}
Using Taylor expansion and the asymptotic \eqref{hs}, we write 
\begin{equation}\label{expans-D-1-in_var-q}
D_1 (q,s) = \left( -\frac{\alpha}{(p-1)s}  + O\left( \frac{\ln s}{s^2} \right)  \right) \left( \varphi^p- \varphi  + \left( p \varphi^{p-1} -1 \right)q \right) + O\left( q^2\right).
\end{equation}
Using again the definition of $\varphi$ and a Taylor  expansion, we derive 
\begin{align*}
\varphi^p &= 1- \frac{(\vert y\vert^2 - 2n)}{4s}  + O\left( \frac{1 + |y|^4}{s^2}\right), \\
\varphi &=   1 - \frac{(\vert y\vert^2 - 2n)}{4ps}  + O\left( \frac{1 + |y|^4}{s^2}\right),\\
p \varphi^{p-1}  -1 &= p-1- \frac{(p-1)(\vert y\vert^2 - 2n)}{4ps}  + O\left( \frac{1 + |y|^4}{s^2}\right),
\end{align*}
as $s \to +\infty$. Inserting \eqref{ineq-q_poli-S-A} and these estimates into \eqref{expans-D-1-in_var-q} yields \eqref{expan-D-1-h-2}.

We now turn to the proof of \eqref{expan-D-2-h-2}. Recall from \eqref{def:L-first} the definition of $L$,
$$L(q + \varphi, s)  = \frac{2 \alpha  \psi_1^2}{\ln(\psi_1^2  + 2)(\psi_1^2 +2)}( q + \varphi -1)  + \frac{1}{\ln^{\alpha}(\psi_1^2 +2)}\int_{1}^{q + \varphi}  f''(v) (q + \varphi -v)dt,
$$
where $f(v) = \ln^{\alpha}(\psi_1^2 v^2  +2), v \in \mathbb{R}$. From  \eqref{insi-q+varphi-positive} and a direct computation, we estimate
$$
\left|  \frac{1}{\ln^{\alpha}(\psi_1^2  + 2)} \int_{1}^{q + \varphi} f''(v) (q  + \varphi  - v) dv\right| \leq C(K)  \frac{|q + \varphi -1|^2}{s},
$$
which yields
\begin{equation}\label{eurer-L-q-varphi}
\left|L(q + \varphi, s) - \frac{2 \alpha \psi_1^2 (q + \varphi  - 1)}{\ln(\psi_1^2 +2)(\psi_1^2  +2)} \right|  \leq C(K) \frac{|q + \varphi -1|^2}{s}.
\end{equation}
From \eqref{lnpsi(s)} and \eqref{eurer-L-q-varphi}, we then have
$$
\left|L(q + \varphi, s) - \frac{\alpha (p-1)  (q + \varphi  - 1)}{s} \right| \leq C(K) \left( \frac{|q + \varphi -1|^2}{s}  + \frac{\ln s \vert q + \varphi -1 \vert }{s^2}\right),
$$
and beside that we have
$$  \vert q + \varphi - 1 \vert \leq \frac{C ( 1 + \vert y \vert^2 }{s},$$
imply that
\begin{equation}\label{eurer-L-q-varphi-1}
\left|L(q + \varphi, s) - \frac{\alpha (p-1)  (q + \varphi  - 1)}{s} \right| \leq C(K) \frac{\ln s(1 + \vert y\vert^4)}{s^3},
\end{equation}
Moreover, from definition of $D_2 $ and \eqref{eurer-L-q-varphi-1} we deduce that
$$
\left| D_2(q,s) - \frac{\alpha}{s} \left(  \varphi^{p+1} - \varphi^p  + ((p+1)\varphi^p  - p\varphi^{p-1} )q  \right) \right| \leq C\frac{(1  +|y|^4) \ln s}{s^3}, 
$$
and
\begin{align*}
\varphi^{p+1} - \varphi^p  &= - \frac{(\vert y\vert^2 - 2)}{4ps} + O \left( \frac{1 + |y|^4}{s^2}\right), \text{ as },\\
(p+1) \varphi^{p} -   p\varphi^{p-1} &= 1 - \frac{(\vert y\vert^2 - 2)}{2s}  + O\left( \frac{1 + |y|^4}{s^2}\right), \text{ as },
\end{align*}
as $s \to +\infty$ which yield \eqref{expan-D-2-h-2}.

We now prove for \eqref{bound-D}. 
From \eqref{hs} and the boundedness of $q$ and $\varphi$, we have 
$$|D_1(q,s)| \leq \frac Cs.$$
It is sufficient to prove that for all $y \in \mathbb{R}^n$,
$$|D_2(q,s)| \leq \frac{C(K) }{s},$$
Indeed, from definition \eqref{def:L-first} of $L$ we deduce that
$$D_2(q,s) = h(s) |q + \varphi|^{p-1}(q + \varphi) \frac{\ln^{\alpha}(\psi_1^2 z^2 +2)}{\ln^{\alpha}(\psi^2 +2)} - h(s)|q + \varphi|^{p-1}(q + \varphi).$$
Using Lemma \ref{asymptotic-zp-h-s-ln-al} we deduce
$$|D_2(q,s)| \leq \frac{C(K)}{s}.$$
This completes the proof of Lemma \ref{inside-D}.
\end{proof}

\begin{lemma} \label{lemma-of-rest-termes} When $s$ large enough, then we  have for all $y \in \mathbb{R}^n$:
\begin{itemize}
\item[$i)$] (Estimates on $V$): 
$$\left| V (y,s)\right| \leq \frac{C (1 + |y|^2)}{s}, \forall y \in \mathbb{R}^n, $$
and
$$ V = -  \frac{(|y|^2  - 2n)}{4s} + \tilde V \quad \text{with} \quad \tilde V = O \left( \frac{1 + |y|^4}{s^2} \right), \forall |y| \leq K \sqrt s.$$
\item[$ii)$] (Estimates on $R$ ) 
$$| R(y,s) | \leq \frac{C}{s}, \forall y \in \mathbb R^n,$$
and 
$$ R(y,s) = \frac{c_p}{s^2}  +  \tilde R(y,s) \quad \text{with} \quad \tilde R  = O \left(  \frac{1 + |y|^4}{s^3}\right), \forall |y| \leq K \sqrt s.$$
\end{itemize}
\end{lemma}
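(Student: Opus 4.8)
The plan is to exploit the factorisation $\varphi = \varphi_1 + \tfrac{n}{2ps}$, where $\varphi_1(y,s) = \phi_0\big(y/\sqrt s\big)$ and $\phi_0(z) = \big(1 + \tfrac{p-1}{4p}|z|^2\big)^{-1/(p-1)}$ is the \emph{exact} solution of the profile equation \eqref{equa:phi}. The self-similar form of $\varphi_1$ lets one trade all its derivatives for $z$-derivatives of $\phi_0$:
\[
\nabla_y\varphi_1 = s^{-1/2}\nabla_z\phi_0,\qquad \Delta_y\varphi_1 = s^{-1}\Delta_z\phi_0,\qquad y\cdot\nabla_y\varphi_1 = z\cdot\nabla_z\phi_0,\qquad \partial_s\varphi_1 = -\tfrac1{2s}\,z\cdot\nabla_z\phi_0,
\]
and since $\phi_0,\nabla_z\phi_0,\Delta_z\phi_0$ are bounded on $\mathbb{R}^n$ and rational in $|z|^2$, all the expansions below are legitimate with remainders polynomially bounded in $|z|$.

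For $V$, I would first note that $\varphi_1^{p-1} = \big(1 + \tfrac{(p-1)|y|^2}{4ps}\big)^{-1}$ exactly; the identity $\tfrac1{1+x} = 1-x+\tfrac{x^2}{1+x}$ gives, for all $y$, $\varphi_1^{p-1} = 1 - \tfrac{(p-1)|y|^2}{4ps} + O\!\big(|y|^4/s^2\big)$. To pass from $\varphi_1$ to $\varphi$ I split into regions: on $\{|y|\le K\sqrt s\}$ one has $\varphi_1 \ge c_K > 0$, so a Taylor expansion valid uniformly there (even when $1<p<2$) yields $\varphi^{p-1} - \varphi_1^{p-1} = (p-1)\varphi_1^{p-2}\tfrac{n}{2ps} + O(s^{-2}) = \tfrac{(p-1)n}{2ps} + O\!\big((1+|y|^4)/s^2\big)$; combining, $V = \tfrac{p}{p-1}(\varphi^{p-1}-1) = -\tfrac{|y|^2-2n}{4s} + \tilde V$ with $\tilde V = O\!\big((1+|y|^4)/s^2\big)$, which also gives $|V|\le C(1+|y|^2)/s$ there. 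On $\{|y|\ge K\sqrt s\}$ one uses only $0<\varphi \le 1 + \tfrac{n}{2ps}$, hence $|V|\le C \le C|y|^2/s$, closing the global bound.

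For $R$, substitute $\varphi = \varphi_1 + g$, $g = \tfrac n{2ps}$, into \eqref{def:R}; since $-\tfrac12 y\cdot\nabla\varphi_1 - \tfrac{\varphi_1}{p-1} + \tfrac{\varphi_1^p}{p-1}$ vanishes (this is \eqref{equa:phi} rewritten in $y$), one is left with
\[
R = \frac1s\Big(\Delta_z\phi_0 + \tfrac12\, z\cdot\nabla_z\phi_0\Big) + \frac{g}{p-1}\big(p\varphi_1^{p-1}-1\big) + \frac{n}{2ps^2} + \frac{(\varphi_1+g)^p - \varphi_1^p - p\varphi_1^{p-1}g}{p-1}.
\]
All four terms are $O(1/s)$ globally (the last via the mean value theorem for $t\mapsto t^p$ on $[0,1+g]$), so $|R|\le C/s$. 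For the refined bound on $\{|y|\le K\sqrt s\}$ I would expand in powers of $|z|^2=|y|^2/s$: with $c_0=\tfrac{p-1}{4p}$ one gets $\Delta_z\phi_0 + \tfrac12 z\cdot\nabla_z\phi_0 = -\tfrac n{2p} + \tfrac n{8p}|z|^2 + O(|z|^4)$ and $\tfrac1{p-1}(p\varphi_1^{p-1}-1) = 1 - \tfrac14|z|^2 + O(|z|^4)$. Hence the $-\tfrac n{2ps}$ from the first term is cancelled by the $+\tfrac n{2ps}$ from the second, \emph{and} the two $O(|y|^2/s^2)$ contributions cancel as well, leaving $R = \tfrac{c_p}{s^2} + \tilde R$ with $\tilde R = O\!\big((1+|y|^4)/s^3\big)$ and $c_p$ the explicit constant built from the $\tfrac n{2ps^2}$ term and the leading part $\tfrac p2 g^2$ of the quadratic remainder.

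The substance here is bookkeeping, not ideas. The two points that really need care are: (a) the low-exponent range $1<p<2$, where $t\mapsto t^{p-1}$ and $t\mapsto t^p$ fail to be $C^2$ at $0$ — this is exactly why the estimates are split into the inner region $\{|y|\le K\sqrt s\}$ (where $\varphi_1$ is bounded below and classical Taylor applies) and the outer region $\{|y|\ge K\sqrt s\}$ (where one uses only boundedness of $\varphi$, $\phi_0$, $\nabla_z\phi_0$, $\Delta_z\phi_0$); and (b) the second cancellation in the $R$ estimate, since without it one would only obtain $\tilde R = O(1/s^2)$ — one must verify that the $|z|^2$-coefficients of $\Delta_z\phi_0 + \tfrac12 z\cdot\nabla_z\phi_0$ and of $g\cdot\tfrac1{p-1}(p\varphi_1^{p-1}-1)$ are opposite, which is precisely where the value $c_0=\tfrac{p-1}{4p}$ enters. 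Everything else is Taylor expansion with polynomially-bounded remainders.
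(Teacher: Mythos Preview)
Your proposal is correct and follows the same approach the paper indicates, namely Taylor expansion of $\varphi$ and its powers around the exact profile $\phi_0$; the paper itself omits the details and simply refers the reader to Lemmas~B.1 and~B.5 of \cite{ZAAihn98}. Your write-up in fact supplies more than the paper does: the explicit identification of the two cancellations in $R$ (the $O(1/s)$ one and the $O(|y|^2/s^2)$ one, the latter pinning down $c_0=\tfrac{p-1}{4p}$) and the careful treatment of the range $1<p<2$ via the inner/outer splitting are exactly the points one must check, and both are handled correctly.
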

 \begin{proof} The proof simply follows from Taylor expansion. We refer to Lemmas B.1 and B.5 in \cite{ZAAihn98} for a similar proof. 
 \end{proof}
 \begin{lemma}[Estimates on $B(q)$]\label{estimate-B-q}
 For all $A > 0$  there exists $\sigma_5(A) > 0$ such that for all $ s \geq \sigma_5 (A), q(s) \in S_A(s)$ implies 
 \begin{equation}\label{bound-ourside-B-q}
 | B (q (y,s))|  \leq C |q|^2,
 \end{equation}
 and
 \begin{equation}\label{bound-allspace-B-q}
 |B(q)|  \leq C |q|^{\bar p},
 \end{equation}
 with $\bar p  = \min (p,2)$.
 \end{lemma}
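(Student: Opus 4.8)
The plan is to view $B(q)$ as the second-order Taylor remainder of the nonlinearity $F(v):=|v|^{p-1}v$ about the point $v=\varphi$, i.e. $(p-1)B(q)=F(\varphi+q)-F(\varphi)-F'(\varphi)q$ with $F'(v)=p|v|^{p-1}$, and to exploit that $F\in C^1(\mathbb{R})$ while $F\in C^2(\mathbb{R}\setminus\{0\})$ with $F''(v)=p(p-1)|v|^{p-2}\mathrm{sgn}(v)$. The structural facts I would use are that $\varphi>0$ everywhere (indeed $\varphi\ge\frac{n}{2ps}$ by \eqref{def-varphi}), that $\|\varphi(s)\|_{L^\infty}\le 2$ for $s$ large, and that $\varphi(y,s)\ge c_0(K)>0$ for $|y|\le 2K\sqrt s$ with $c_0(K)$ independent of $s$. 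I would first pick $\sigma_5(A)$ so large that, for $s\ge\sigma_5(A)$, $q(s)\in S_A(s)$ forces $\|q(s)\|_{L^\infty}\le 1$ via \eqref{norm-q-allspace} and $\|q(s)\|_{L^\infty(\{|y|\le 2K\sqrt s\})}\le\tfrac12 c_0(K)$ via \eqref{norm-q-inside}; if $q(y,s)=0$ everything is trivial, so assume $q(y,s)\ne 0$ below.

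For \eqref{bound-ourside-B-q}, which I read as the bound in the inner region $\{|y|\le 2K\sqrt s\}$: there $|q|\le\tfrac12\varphi$, so the whole segment $\{\varphi+tq:t\in[0,1]\}$ stays in $[\tfrac12 c_0(K),3]$, away from the singularity of $F$. Taylor's formula with integral remainder gives $(p-1)B(q)=q^2\int_0^1(1-t)F''(\varphi+tq)\,dt$, and since $|F''(\varphi+tq)|=p(p-1)|\varphi+tq|^{p-2}\le C(K)$ uniformly, this yields $|B(q)|\le C(K)|q|^2$.

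For \eqref{bound-allspace-B-q}, which must hold on all of $\mathbb{R}^n$ where $\varphi$ may be arbitrarily small, I would argue pointwise with $a:=\varphi(y,s)\in(0,2]$, $b:=q(y,s)$, $|b|\le 1$, and split into two cases. If $|b|\le\tfrac12 a$, the segment from $a$ to $a+b$ stays above $a/2>0$ and the Taylor bound gives $|(p-1)B(q)|\le C\,a^{p-2}|b|^2$; for $p\ge 2$ we bound $a^{p-2}\le 2^{p-2}$ to get $\le C|b|^2$, while for $1<p<2$ we use $a\ge 2|b|$, hence $a^{p-2}\le (2|b|)^{p-2}$, to get $\le C|b|^p$. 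If instead $|b|>\tfrac12 a$, then $|a+b|\le 3|b|$ and $a\le 2|b|$, so estimating each of the three summands of $F(a+b)-F(a)-F'(a)b$ separately gives $|(p-1)B(q)|\le C|b|^p$; this is already $C|b|^{\bar p}$ when $1<p<2$, and for $p\ge 2$ the bound $|b|\le 1$ upgrades $|b|^p$ to $|b|^2$. Collecting the four sub-cases gives $|B(q)|\le C|q|^{\bar p}$ with $\bar p=\min(p,2)$.

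All the computations here are elementary; the only delicate point, and the \emph{main obstacle} such as it is, is the bookkeeping forced by the non-smoothness of $F$ at the origin --- one must keep track of whether $\varphi$ dominates $|q|$ or is merely comparable to it, and handle $p\ge 2$ and $1<p<2$ on separate footing --- which is precisely what produces the exponent $\bar p=\min(p,2)$ in the global estimate.
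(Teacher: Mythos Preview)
Your proof is correct. The paper does not actually give its own argument here --- it simply refers to Lemma~3.6 of Merle--Zaag \cite{MZdm97} --- and what you have written is precisely the standard computation behind that reference: identify $(p-1)B(q)$ as the second-order Taylor remainder of $v\mapsto|v|^{p-1}v$ at $v=\varphi$, use the uniform lower bound $\varphi\ge c_0(K)$ on $\{|y|\le 2K\sqrt s\}$ to get the clean $|q|^2$ bound there, and in the global case split according to whether $|q|\le\tfrac12\varphi$ or not to handle the loss of smoothness at the origin, which produces the exponent $\bar p=\min(p,2)$. Your reading of \eqref{bound-ourside-B-q} as an inner-region estimate is the correct one: for $1<p<2$ the quadratic bound cannot hold uniformly where $\varphi$ is small, as the example $q=\varphi$ with $\varphi\to 0$ shows.
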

 \begin{proof} See Lemma 3.6 in \cite{MZdm97} for the proof of this lemma.
 \end{proof}

%\bibliographystyle{plainnat}
%\bibliography{mybib} 

%\bibliographystyle{plainnat}
%\bibliography{/Volumes/Data/Work/mybib} %for Mac

\def\cprime{$'$}

\end{document}